\newcommand{\R}{\mathbb{R}}
\newcommand{\C}{\mathcal{C}}
\newcommand{\MAF}{{\rm uMAF}}
\newtheorem{Corollary}{Corollary}
\newtheorem{Definition}{Definition}
\newtheorem{Lemma}{Lemma}
\newtheorem{Theorem}{Theorem}
\newcommand{\vertex}{\node[vertex]}
\tikzstyle{vertex}=[draw, shape=circle, minimum size=0.5em, inner sep=1, fill]
\title{Convex characters, algorithms and matchings}
\author{Steven Kelk, Ruben Meuwese, Stephan Wagner}
\date{}
\newcommand{\Addresses}{{
  \bigskip
  \footnotesize

  Steven Kelk, Ruben Meuwese, \textsc{Department of Data Science and Knowledge Engineering (DKE), Maastricht University, P.O. Box 616, 6200 MD Maastricht, The Netherlands}\par\nopagebreak
  \textit{E-mail address}, S. Kelk: \texttt{steven.kelk@maastrichtuniversity.nl}\par\nopagebreak
  \textit{E-mail address}, R. Meuwese: \texttt{r.meuwese@maastrichtuniversity.nl}

  \medskip

  Stephan Wagner, \textsc{Department of Mathematics, Uppsala Universitet, P.O. Box 480, 751 06 Uppsala, Sweden}\par\nopagebreak
  \textit{E-mail address}, S. Wagner: \texttt{stephan.wagner@math.uu.se}

}}
\begin{document}

\maketitle

\begin{abstract}
Phylogenetic trees are used to model evolution: leaves are labelled to represent contemporary species (``taxa'') and interior vertices represent extinct ancestors. Informally, convex characters are measurements on the contemporary species in which the subset of species (both contemporary and extinct) that share a given state, form a connected subtree. In \cite{KelkS17} it was shown how to efficiently count, list and sample certain restricted subfamilies of convex characters, and algorithmic applications were given. We continue this work in a number of directions. First, we show how combining the enumeration of convex characters with existing parameterised algorithms can be used to speed up exponential-time algorithms for the \emph{maximum agreement forest problem} in phylogenetics. 
Second, we re-visit the quantity $g_2(T)$, defined as the number of convex characters on $T$ in which each state appears on at least 2 taxa. We use this to give an algorithm with running time  $O( \phi^{n} \cdot \text{poly}(n) )$, where $\phi \approx 1.6181$ is the golden ratio and $n$ is the number of taxa in the input trees, for computation of \emph{maximum parsimony distance on two state characters}. By further restricting the characters counted by $g_2(T)$ we open an interesting bridge to the literature on enumeration of matchings. By crossing this bridge we improve the running time of the aforementioned parsimony distance algorithm to $O( 1.5895^{n} \cdot \text{poly}(n) )$, and obtain a number of new results in themselves relevant to enumeration of matchings on at-most binary trees.
\end{abstract}

\section{Introduction}

In bioinformatics evolution is often represented by a binary tree $T$ whose leaves are bijectively labelled by a set $X$ of labels (``taxa'') which represent contemporary species. Interior nodes represent hypothetical ancestors of the species in $X$; the tree models the branching process which, via evolutionary phenomena such as mutation and speciation, caused $X$ to evolve. Such a tree is known as a \emph{phylogenetic tree}.

There is an extensive mathematical and algorithmic literature on constructing and comparing phylogenetic trees (see e.g. \cite{SempleS03,BulteauW19}). Many corresponding optimisation problems are NP-hard. One algorithmic approach to tackling NP-hard problems is to develop exact exponential-time algorithms \cite{fomin2010exact}: usually, algorithms that run in time $O(c^n \cdot \text{poly}(n))$ where $c$ is a (small) constant and $n$ is the size of the input. The literature on exponential-time algorithms focusses, logically, on optimising the constant $c$. The interest is not purely theoretical, since especially in the early phases of studying an optimisation problem an exponential-time algorithm might be the best practical method available, and then optimising $c$ can make a crucial difference in practice. Exponential-time algorithms can also be useful for solving the smaller, reduced instances yielded by pre-processing strategies \cite{kernelization2019}.

In this article we give improved exponential-time algorithms for three optimisation problems occurring in phylogenetics. In all three cases we make heavy use of enumeration; in particular, enumeration of so-called \emph{convex characters}. For a phylogenetic tree $T$ on $X$, a convex character is a partition of $X$ such that the minimal spanning trees induced by the blocks of $X$ are disjoint in $T$ (see e.g. Section 4.1 of \cite{SempleS03}). In \cite{KelkS17} it was proven that a phylogenetic tree on $n$ taxa has $\Theta(\phi^{2n})$ convex characters, and
$\Theta(\phi^{n})$ convex characters in which each block of the character contains at least two taxa, where $\phi \approx 1.681$ is the golden ratio and $\phi^2 \approx 2.6181$. These characters can also be efficiently counted and listed, which formed the basis for a $O^{*}(\phi^{2n})$ time algorithm for the \emph{unrooted maximum agreement forest} problem and a $O^{*}(\phi^{n})$ time algorithm for the \emph{maximum parsimony distance} problem, where $O^{*}(.)$ denotes suppression of polynomial factors. The algorithms are based on the insight that optimal solutions are either convex characters themselves, or ``project down'' onto convex characters - see \cite{kelk2021sharp} for related discussions.

In this article we extend the results from \cite{KelkS17} in several directions. We improve the $O^{*}(2.6181^n)$ algorithm for the unrooted maximum agreement forest problem to $O^{*}(2.2973^n)$, and give a $O^{*}(2.0649^n)$ algorithm for the \emph{rooted} variant of this problem.
In both cases the high-level idea is to leverage an existing branching-based algorithm that can answer the question, ``Is the optimum at most $k$?'' in time exponential in $k$ (as opposed to $n$). This so-called \emph{fixed parameter tractable} algorithm - see \cite{CyganBook} for an introduction to such algorithms - is run incrementally up to a carefully-chosen threshold $k'$. If the threshold is exceeded, i.e. the optimum is ``large'', the optimum solution can be found by searching through a comparatively small subset of convex characters. 

Our third contribution concerns the NP-hard problem \emph{maximum parsimony distance on two-state characters} \cite{KelkF17,KelkISW16}. This problem has a trivial $O^{*}(2^n)$ time algorithm, but unlike the more general variant of the problem there is no obvious link with convex characters. Here we establish such a link, using it to obtain an enumeration-based algorithm with running time $O^{*}(\phi^n)$, and then improve this to  $O^{*}(1.5895^n)$, also using enumeration. The strengthening to $O^{*}(1.5895^n)$ is potentially of wider interest, because it operates by enumerating \emph{matchings} (pairwise disjoint sets of edges) in transformed variants of the input trees. There is an extensive literature on the enumeration of matchings in trees\footnote{The number of matchings in a tree is often called the \emph{Hosoya Index.}}; see \cite{wagner2010maxima} for a survey. Due to the combinatorial structure of the maximum parsimony distance problem, we in fact only need to consider certainly carefully constrained subsets of matchings, and it is these that we bound/enumerate using a technique from \cite{Rosenfeld21,Rote19}. Interestingly, this translates to a new result on normal (i.e. unconstrained) matchings; in particular, we show that on (not necessarily phylogenetic) trees with in total $n$ nodes, maximum degree 3 and no degree-2 nodes adjacent to each other, there can be at most $O(1.5895^n)$ matchings. This supplements existing upper bounds of $O(\phi^n)$, valid for arbitrary trees, and $O(1.5538^n)$ for trees of maximum degree 3 where all interior nodes have degree 3 (see \cite[Theorem 1 and Remark 5]{andriantiana2011number}; the precise constant is $\sqrt{1+\sqrt{2}}$).

In Section \ref{Sec Pre} we give preliminaries. Section \ref{Sec uMAF} describes our improvements for the maximum agreement forest problem(s) while Sections \ref{Sec DM2 Theory} and \ref{Sec Matching Theory} describe our improvement for the maximum parsimony distance problem on two state characters; the first of these two sections builds the correspondence with convex characters, and the second section builds the correspondence with matchings, the number of which we then bound. In an extended discussion (Section \ref{sec:discussion}) we provide a number of auxiliary insights and (lower) bounds emerging from Sections \ref{Sec DM2 Theory} and \ref{Sec Matching Theory}.


\section{Preliminaries}\label{Sec Pre}
For general background on mathematical phylogenetics we refer to \cite{SempleS03, DressHKMS12}. 
An {\it unrooted binary phylogenetic $X$-tree} is an undirected tree $T =(V(T),E(T))$ where every internal vertex has degree 3 and whose leaves are bijectively labelled by a set $X$, where $X$ is
often called the set of \emph{taxa} (representing the contemporary species). We use $n$ to denote $|X|$ and often simply write \emph{tree} when this is clear from the context.
Two phylogenetic trees $T, T'$ on $X$ are considered equal if there is an isomorphism between them that is identity on $X$.

A {\it character} $f$ on $X$ is a surjective function $f: X\rightarrow \C$ for some set $\C$ of {\em states} (where a state represents some characteristic of the species e.g. number of legs). We say that $f$ is an $r$-state character if $|\C|=r$.
Each character naturally induces a partition of $X$ and here we regard two characters as being equivalent if they both induce the same partition of $X$. Hence, the states can simply be regarded as the blocks of a partition. An {\it extension} of a character $f$ to $V(T)$ is a function $h: V(T)\rightarrow \C$ such that $h(x) = f(x)$ for all $x$ in $X$. For such an extension $h$ of $f$, we denote by $l_{h}(T)$ the number of edges $e=\{u,v\}$ such that $h(u) \neq h(v)$. The {\em parsimony score} of a character $f$ on $T$, denoted by $l_{f}(T)$, is obtained by minimising $l_{h}(T)$ over all possible extensions $h$ of $f$. We say that a character $f: X \rightarrow \C$ is \emph{convex on $T$} if $l_{f}(T) = |\C| - 1$. Equivalently: a character $f: X \rightarrow \C$ is convex on $T$ if there exists an extension $h$ of $f$ such that, for each state $c \in \C$, the vertices of $T$ that are allocated state $c$ (by $h$) form a connected subtree of $T$. We call such an extension $h$ a \emph{convex extension} of $f$. The convexity of a character can be tested in polynomial \cite{Fitch71, Hartigan73} (in fact, linear \cite{BachooreB06}) time. We note that a third, equivalent definition of convexity which does not use the machinery of parsimony scores or extensions is as follows: a character $f$ is convex on $T$ if the minimal spanning trees
induced by the blocks of $f$ are vertex-disjoint.
 In \cite{KelkS17} the quantity $g_k(T)$ was introduced, $k \geq 1$, defined as the number of convex characters of $T$ in which each state contains at least $k$ taxa. The quantity $g_1(T)$ is therefore simply the total number of convex characters. This grows as $\Theta(\phi^{2n})$, independently of $T$, where $\phi \approx 1.6181...$ is the golden ratio. Because the bases of the exponential terms we describe in this article are often close together, and the use of these terms in bounding the worst-case running time of algorithms, we give 4 decimal points and always round up. 
 This will provide a better view of our improvements. In Section \ref{Sec DM2 Theory} of this article, $g_2(T)$ plays a prominent role. It grows as $\Theta(\phi^{n})$, also independently of $T$ \cite{KelkS17}. See Figure \ref{fig:g2} for an illustration.

\begin{figure}[h]
\centering
\includegraphics[scale=0.2]{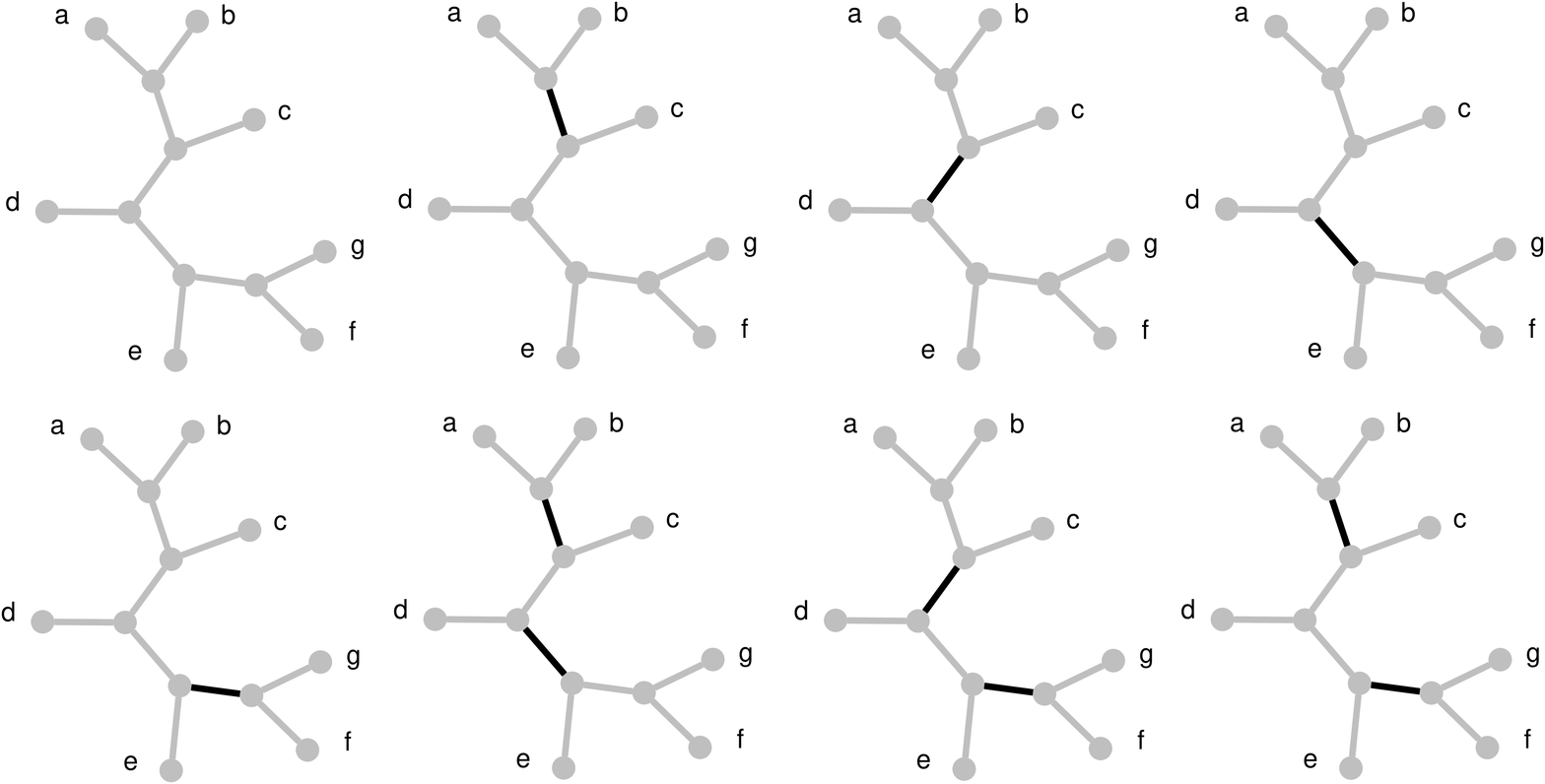}
\caption{A phylogenetic tree $T$ where $X=\{a,b,c,d,e,f,g\}$. For the given tree there are in total 233 convex characters \cite{KelkS17}. There are 8 convex characters in which each state appears on at least 2 taxa, so $g_2(T)=8$. One of these characters has exactly 1 state ($abcdefg$), 4 of these characters have 2 states ($ab|cdefg$, $abc|defg$, $abcd|efg$ and $abcde|fg$), and 3 of these characters have 3 states ($ab|cd|efg$, $abc|de|fg$ and $ab|cde|fg$). Here we use $|$ to denote the states (i.e. blocks) of the character (i.e. partition). The 8 figures verify the convexity of these characters: the minimal spanning trees induced by the states of the character, shown in grey, are vertex-disjoint.}
\label{fig:g2} 
\end{figure}


\section{Improved exponential-time algorithms for agreement forests} \label{Sec uMAF}

Let $T$ be a tree on $X$. For $X' \subseteq X$ we write $T[X']$ to denote the miminal subtree of $T$ spanning $X'$, and
write $T|X$ to denote the phylogenetic tree obtained from $T[X']$ by suppressing nodes of degree 2.

Let $T$ and $T'$ be two phylogenetic trees  on $X$. Let $F = \{ X_1,X_2,\ldots,X_k\}$ be a partition of $X$, where each block $X_i$ with $i\in\{1,2,\ldots,k\}$ is  referred to as a \emph{component} of $F$. We say that $F$ is an \emph{agreement forest} for $T$ and $T'$ \cite{AllenS01} if the following conditions hold.
\begin{enumerate}
\item [(1)] For each $i\in\{1,2,\ldots,k\}$, we have $T|X_i = T'|X_i$.
\item [(2)] For each pair $i,j\in\{1,2,\ldots,k\}$ with $i \neq j$, we have that
$T[X_i]$ and $T[X_j]$ are vertex-disjoint in $T$, and $T'[X_i]$ and $T'[X_j]$ are vertex-disjoint in $T'$.
\end{enumerate}
\noindent
Let $F=\{X_1,X_2,\ldots,X_k\}$ be an agreement forest for $T$ and $T'$. The \emph{size} of $F$ is simply its number of components, $k$. Moreover, an agreement forest with the minimum number of components (over all agreement forests for $T$ and $T'$) is called a \emph{maximum agreement forest (MAF)} for $T$ and $T'$. The number of components of a maximum agreement forest for $T$ and $T'$ is denoted by $d_\MAF(T,T')$. The \textsc{Unrooted Maximum Agreement Forest (uMAF)} problem is to compute $d_\MAF(T,T')$; it is NP-hard \cite{AllenS01,hein1996complexity}. There is also a \emph{rooted} version of the problem but we defer a discussion of this until later.

\subsection{An $O^{*}(2.2973^n)$ algorithm for finding an uMAF}

By part (2) of the definition of agreement forest, an agreement forest for $T$ and $T'$ is a convex character on both $T$ and $T'$, but the converse does not necessarily hold. A tree has $\Theta(\phi^{2n})$ convex characters and these can be listed efficiently \cite{KelkS17}. Combined with the fact that it can easily be tested in polynomial time whether a convex character is an agreement forest, \cite{KelkS17} gave an elementary $\Theta^{*}(\phi^{2n}) = \Theta^{*}(2.6181^n)$ algorithm for computing $d_\MAF(T,T')$, simply by looping through all characters that are convex on $T$ and noting the smallest agreement forest found.

To improve upon this, we leverage an  existing algorithm by Chen et al \cite{ChenFS15} that can answer the question ``Is $d_\MAF(T,T') \leq k$?'' in time $O( 3^{k} \cdot \text{poly}(n))$ -- and, when the answer is YES, can construct an agreement forest of size at most $k$. Starting at 1 and incrementing $k$ until the answer is YES gives an algorithm for computing $d_\MAF(T,T')$. 
Such \emph{fixed parameter tractable} (FPT) algorithms are fast when the parameter $k$ is small (relative to $n$), but slow down considerably for higher $k$. Our idea is to use the algorithm of Chen up to a certain tipping point $k'=cn$, for a constant $c$ still to be determined, exploiting the fact that $3^k$ is far smaller than $2.6181^n$ up to quite large values of $k$. If Chen's algorithm finds an agreement forest, we are done. Otherwise (i.e.  the answer is still NO) we switch to enumeration of convex characters. Specifically, we list all characters that are convex on $T$ and which have more than $k'$ states. For each such character we check whether it is an agreement forest of $T$ and $T'$ and note the smallest agreement forest found this way.


To put this into practice, we need to bound the number of convex characters with `many' states, ensure that they can be listed/enumerated efficiently, and determine the optimal tipping point $k'$.\\
\\
Let us define the \emph{size} of a convex character as its number of states/blocks. We will show that there is a dynamic programming (DP) algorithm with a running time of $O(f(k) \cdot \text{poly}(n))$ that can enumerate every convex character with $k$ or more states for a tree $T$ where $f(k)$ equals the number of convex characters of size at least $k$.  Steel \cite{Steel92} gives us an exact expression for how many convex character there are of size $k$, denoted by $g(T, k)$ which equals $\binom{2n-k-1}{k-1}$ - note that this number is independent of the topology of $T$. The whole point of designing this DP is not to calculate how many convex characters there are but where they come from, so they can be efficiently constructed. Kelk and Stamoulis \cite{KelkS17} have given a DP algorithm that enumerates convex characters where each state has at least a certain number of taxa. We draw inspiration from this algorithm and construct a similar DP algorithm. The DP requires the tree to be rooted, in order to establish an unambiguous parent-child relation between nodes. This can be easily achieved by subdividing an edge of the tree, since the presence of the subdividing vertex does not change the space of convex characters.\\
\\
For a node $u$ of a tree $T$, we let $T_u$ be the subtree rooted at $u$. A convex character of size $k$ for $T_u$ can be created in two ways. It can be the disjoint union of a convex character from the left child node of $u$, and the right, such that the combination has $k$ states. Or it can be a combination of a convex character from the left child node and the right whereby a state from the left character is merged with a state from the right i.e. the spanning tree for this merged state traverses the root.
Because of this we need to store more than simply all the convex characters of subtrees in our DP algorithm; we also have to note which states of a convex character can `reach the root'; we will formalise this in due course.

To keep track of these distinctions, for each node $u$ of tree $T$ we enumerate ordered pairs $(f,A)$ where $f$ is a convex character of $T_u$ such that there is an optimal extension of $f$ to $T_u$ whereby the root of $T_u$ is assigned state $A$. Essentially,
this means that, if desired, state $A$ of $f$
will be available for merging with another state in the tree beyond $T_u$.


We write $(f, \emptyset)$ to denote
the situation when $f$ is a convex character of $T_u$ but we simply do not care about which state is assigned to the root of $T_u$. We will not be merging any state of this character with a state from beyond $T_u$, which is why we do not care.


For the DP algorithm we store two types of values. First, the number of convex characters of size $k$, denoted by $g(T_u, k)$, and second the number of pairs $(f,A)$, $A \neq \emptyset$, such that $f$ has $k$ states, denoted by $h(T_u, k)$. Note that $g(T_u,k)$ simply counts all $(f,\emptyset)$ for $T_u$ where $f$ has size $k$.

Let us consider a simple example. Suppose $u$ has two children, both
taxa, labelled $a$ and $b$. $T_u$ has two convex characters, $\{\{a,b\}\}$ and $\{\{a\},\{b\}\}$. 
In this case $g(T_u,1)=1$ and $g(T_u,2)=1$. We have $h(T_u,1)=1$ because this counts the unique tuple $(f, \{a,b\})$ where $f$ is
$\{\{a,b\}\}$. We also have $h(T_u,2)=2$ because
this counts the tuples $(f, \{a\})$ and $(f, \{b\})$ where $f$ is $\{\{a\},\{b\}\}$.

Let $L(T_u)$ be the set of leaves of the subtree $T_u$ and let $m_u=|L(T_u)|$. This means that in the equations below $k$ (and thus $i$ and $j$) have a maximum value $m_u$.

We compute the values $g(T_u, k)$ and $h(T_u, k)$ for each value $k$ using two recursions and move bottom-up from the leaves. If we assume for a node $u$ that the left node $l$ and right node $r$ are calculated correctly the values of $u$ are calculated as follows:
\begin{equation} \label{h value equation FOR REAL}
    h(T_u, k) = \sum_{\substack{i, j \\ i+j=k}} h(T_l, i) g(T_r, j) + g(T_l, i) h(T_r, j)\ \ +\sum_{\substack{i, j \\ i+j=k+1}} h(T_l, i) h(T_r, j)
\end{equation}

Each pair $(f,C)$ in $T_u$ that is counted by the $h(T_u, .)$ quantities has 3 possible origins, demonstrated in Figure \ref{Fig h(T,k) origins}.
\begin{enumerate}
    \item A pair $(f_l,A)$ from the left child node, $A \neq \emptyset$,  is combined with a pair $(f_r, \emptyset)$ from the right child node to create the pair $(f_l \cup f_r, A)$. The character $f_l \cup f_r$ has size $|f_l| + |f_r|$.
    \item 
    A pair $(f_l,\emptyset)$ from the left child node is combined with a pair $(f_r, B)$ from the right child node, $B \neq \emptyset$,  to create the pair $(f_l \cup f_r, B)$. The character $f_l \cup f_r$ has size $|f_l| + |f_r|$.
    \item A pair $(f_l,A)$ from the left child, $A \neq \emptyset$, is combined with a pair $(f_r, B)$ from the right child node, $B \neq \emptyset$, to create the pair $(f_l \cup f_r, A \cup B )$. The character $f_l \cup f_r$ has size $|f_l| + |f_r|- 1$.
\end{enumerate}

The first two origins are counted in the first sum of Equation \ref{h value equation FOR REAL}. The third origin is counted in the second sum of that equation.


\begin{figure}[!h]
\begin{center}
\begin{tikzpicture}[scale = 0.8]
	\vertex [label=left:$u$] (u) at (0,0) {};
	\vertex [label=left:$l$] (l) at (-1.5,-1) {};
    \vertex [label=right:$r$] (r) at (1.5,-1) {};
    
    \node (Tl) at (-1.5,-1.75) {$(f_l,A)$};
    \node (Tr) at (1.5,-1.75) {$(f_r, \emptyset)$};
	
	\draw [line width = 1pt]
	(u) edge [red] (l)
	(u) edge (r)
    (u) edge [red] (0,1)
    (l) -- (-2.5,-2) -- (-0.5,-2) -- (l)
    (r) -- (0.5,-2) -- (2.5,-2) -- (r);
    
    \vertex [label=left:$u$] (u) at (6,0) {};
	\vertex [label=left:$l$] (l) at (4.5,-1) {};
    \vertex [label=right:$r$] (r) at (7.5,-1) {};
    
    \node (Tl) at (4.5,-1.75) {$(f_l, \emptyset )$};
    \node (Tr) at (7.5,-1.75) {$(f_r, B)$};
	
	\draw [line width = 1pt]
	(u) edge (l)
	(u) edge [red] (r)
    (u) edge [red] (6,1)
    (l) -- (3.5,-2) -- (5.5,-2) -- (l)
    (r) -- (6.5,-2) -- (8.5,-2) -- (r);
    
    \vertex [label=left:$u$] (u) at (12,0) {};
	\vertex [label=left:$l$] (l) at (10.5,-1) {};
    \vertex [label=right:$r$] (r) at (13.5,-1) {};
    
    \node (Tl) at (10.5,-1.75) {$(f_l, A)$};
    \node (Tr) at (13.5,-1.75) {$(f_r, B)$};
	
	\draw [line width = 1pt]
	(u) edge [red] (l)
	(u) edge [red] (r)
    (u) edge [red] (12,1)
    (l) -- (9.5,-2) -- (11.5,-2) -- (l)
    (r) -- (12.5,-2) -- (14.5,-2) -- (r);
	
\end{tikzpicture}
\end{center}
\caption{All 3 possible origins for a pair $(f,C)$, as counted by $h(T_u,.)$. }
\label{Fig h(T,k) origins}
\end{figure}
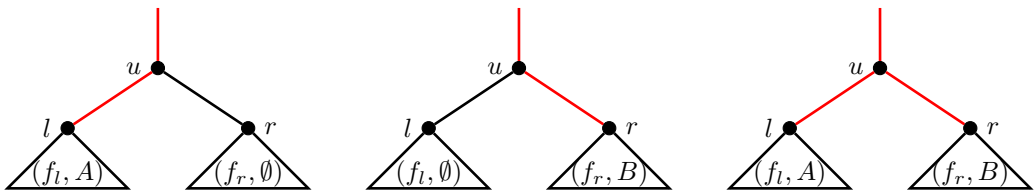

The equation for $g(T_u, k)$ is as follows. The logic behind this equation is similar to that behind $h(T_u, k)$; see Figure \ref{Fig g(T,k) origins}. 

\begin{equation} \label{g value unsorted}
    g(T_u, k) = \sum_{\substack{i, j \\ i+j=k}}  g(T_l, i) g(T_r, j) + \sum_{\substack{i, j \\ i+j=k+1}}  h(T_l, i) h(T_r, j)
\end{equation}

The only thing that remains is a way to enumerate these convex characters. In other words, a way to backtrack in the tree $T$. The recursion will be this guide. One way is looking at Equation \ref{h value equation FOR REAL} as if it were a list that starts with $h(T_l, 1) g(T_r, k-1)$, followed by $h(T_l, 2) g(T_r, k-2)$, passing through $g(T_l, 1) h(T_r, k-1)$ and ending at $h(T_l, k) h(T_r, 1)$. Rewriting this will show it in a more formal way.

\begin{equation} \label{h value sorted}
    h(T_u, k) = \sum_i h(T_l, i) g(T_r, k-i) + \sum_i g(T_l, i) h(T_r, k-i) +\sum_i h(T_l, i) h(T_r, k+1-i)
\end{equation}

We do the same thing for $g(T_u, k)$.

\begin{equation} \label{g value sorted}
    g(T_u, k) = \sum_i g(T_l, i) g(T_r, k-i) + \sum_i h(T_l, i) h(T_r, k+1-i)
\end{equation}

\begin{figure}[!h]
\begin{center}
\begin{tikzpicture}[scale = 0.8]
	\vertex [label=left:$u$] (u) at (0,0) {};
	\vertex [label=left:$l$] (l) at (-1.5,-1) {};
    \vertex [label=right:$r$] (r) at (1.5,-1) {};
    
    \node (Tl) at (-1.5,-1.75) {$(f_l,\emptyset)$};
    \node (Tr) at (1.5,-1.75) {$(f_r, \emptyset)$};
	
	\draw [line width = 1pt]
	(u) edge (l)
	(u) edge (r)
    (u) edge (0,1)
    (l) -- (-2.5,-2) -- (-0.5,-2) -- (l)
    (r) -- (0.5,-2) -- (2.5,-2) -- (r);
    
    \vertex [label=left:$u$] (u) at (6,0) {};
	\vertex [label=left:$l$] (l) at (4.5,-1) {};
    \vertex [label=right:$r$] (r) at (7.5,-1) {};
    
    \node (Tl) at (4.5,-1.75) {$(f_l, A )$};
    \node (Tr) at (7.5,-1.75) {$(f_r, B)$};
	
	\draw [line width = 1pt]
	(u) edge [red] (l)
	(u) edge [red] (r)
    (u) edge [red] (6,1)
    (l) -- (3.5,-2) -- (5.5,-2) -- (l)
    (r) -- (6.5,-2) -- (8.5,-2) -- (r);
	
\end{tikzpicture}
\end{center}
\caption{The two possible origins for a convex character with $k$ states.}
\label{Fig g(T,k) origins}
\end{figure}
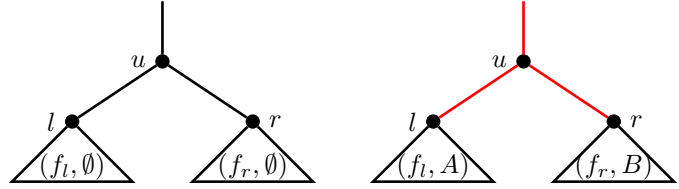

Thus finally we have a way to enumerate\footnote{We note that, if desired, the term $g(T_u,k)$ can be replaced by Steel's closed expression $\binom{2n-k-1}{k-1}$. However, when enumerating and constructing such characters the explicit recurrence for
$g(T_u,k)$ is required.}
 every convex character of size $k$.

\begin{Theorem}
    For a pair of binary trees $T$ and $T'$ we can compute and enumerate each convex character of size at least $k$ for $T$, and determine whether it is a uMAF, in $O(f(k) \cdot \text{poly}(n))$ time.
\end{Theorem}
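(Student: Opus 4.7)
The plan is to combine the two recurrences developed above with a standard polynomial-delay backtracking procedure over the DP table, and then test each enumerated character for the uMAF conditions in polynomial time. First I would, as a preprocessing step, compute the full table of values $g(T_u, k')$ and $h(T_u, k')$ for every node $u$ of the rooted version of $T$ and every $k' \in \{1,2,\ldots,n\}$ using Equations (\ref{g value sorted}) and (\ref{h value sorted}). There are $O(n^2)$ table entries and each entry is a convolution of size $O(n)$ over already-computed entries of its children, so the preprocessing takes $\mathrm{poly}(n)$ time and is dominated by the later enumeration cost.

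Second, I would iterate target sizes $k' = k, k{+}1, \ldots, n$ and, for each such $k'$, invoke a recursive enumeration procedure at the root that emits every convex character contributing to $g(T, k')$ exactly once. At an internal node $u$ with children $l$, $r$, a subproblem of type $(g, k')$ examines the two sums in Equation (\ref{g value sorted}) and, for each index $i$ such that the corresponding product is strictly positive, recurses into $l$ and $r$ with subproblems $(g,i)/(g,k'-i)$ or $(h,i)/(h,k'+1-i)$; the returned pairs are glued together exactly as in Figure \ref{Fig g(T,k) origins}. A subproblem of type $(h, k')$ is handled analogously using Equation (\ref{h value sorted}) and Figure \ref{Fig h(T,k) origins}, keeping track of which state $A$ (resp.\ $A \cup B$) reaches $u$. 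Because each of the three origin rules in Figure \ref{Fig h(T,k) origins} and the two rules in Figure \ref{Fig g(T,k) origins} describes a unique decomposition of the output character into sub-characters on $T_l$ and $T_r$ together with the identity of the root state, each enumerated character is emitted exactly once.

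Third, I would check each enumerated character for being a uMAF in polynomial time: it is already convex on $T$ by construction, so it suffices to verify condition (1) (the blocks induce the same restricted tree on $T$ and $T'$) and condition (2) for $T'$ (vertex-disjointness of the minimal spanning subtrees), both of which are standard polynomial-time tests.

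The main obstacle, and essentially the only nontrivial point, is the total running time bound. The crux is to ensure that the backtracking does not waste time on zero-contribution summands, which is achieved by consulting the DP table before descending and skipping any $(i, k'{-}i)$ (or $(i, k'{+}1{-}i)$) for which one of the two factors vanishes. With this guard, the recursion tree of the enumeration has exactly $f(k)$ leaves (one per emitted character), every internal node corresponds to choosing a summand along a root-to-leaf path of $T$ of length $O(n)$, and every recursive call performs $O(n)$ work for the summand scan and for merging the returned partitions. Hence the total enumeration cost is $O(f(k) \cdot \mathrm{poly}(n))$, and the polynomial-time uMAF check per character is absorbed into the same bound, completing the proof.
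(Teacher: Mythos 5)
Your proposal is correct and follows essentially the same route as the paper: build the $g/h$ dynamic-programming table bottom-up, backtrack through the summands of the recurrences to reconstruct each character (with the canonical decomposition guaranteeing each is emitted once), and then apply polynomial-time convexity and agreement-forest tests on $T'$. Your version is more explicit than the paper's (notably on skipping zero-valued summands to guarantee polynomial delay), but there is no substantive difference in approach.
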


\begin{proof}
    The DP algorithm described above can easily be leveraged to impose a canonical ordering on the convex characters being counted. This is because the $h$ and $g$ values are cleanly defined as a summation of terms. 
    For each node we calculate at most $n$ different $h(T_u, k)$ and $n$ different $g(T_u,k)$ values, each taking at most linear time to calculate. Then, for each convex character we backtrack through the dynamic programming tree in linear time. We then test if the character is also convex on $T'$ which can be done in linear time according to Bachoore and Bodlaender \cite{BachooreB06}. If the character is convex on both trees, testing if it is an agreement forest can also be done in
    low-order polynomial time.
    
\end{proof}

To get the overall runtime $O^{*}(2.2973^n)$ we need an expression for $f(k)$ and to solve the equation $3^k = f(k)$. This equation arises because we wish to balance the time taken by Chen's FPT algorithm, which is $O^{*}(3^k)$, with the time required to enumerate convex characters after Chen's algorithm has stopped.

An expression for $f(k)$ is obtained easily enough by using Steel \cite{Steel92} once more.

\[
    f(k) = \sum_{r=k}^n \binom{2n - r -1}{r-1}
\]

Let us analyse this sum: the quotient of two consecutive terms is
\[
\frac{\binom{2n-r-2}{r}}{\binom{2n-r-1}{r-1}} = \frac{(2n-2r)(2n-2r-1)}{r(2n-r-1)},
\]
which is greater than $1$ for $r < r_0 = \frac{10n-3 - \sqrt{20n^2-20n+9}}{10} = (1-\frac{1}{\sqrt{5}})n + O(1)$, and less than $1$ for $r > r_0$. In other words, the terms increase up to $r_0$ (provided that $k < r_0$), and decrease afterwards. We can combine this with the bound $\binom{n}{pn} \leq e^{n H(p)}$, where $H(p)$ is the entropy function $H(p) = -p\ln p - (1-p)\ln(1-p)$, see for instance Example 11.1.3 in \cite{CoverT06}. Writing $k = cn$, we have the following bound on $f(k)$:
\[
f(k) \leq \begin{cases} \exp \Big( (2-c) n H\big( \frac{c}{2-c} \big) + O(\log n) \Big) & c \geq c_0 = 1 - \frac{1}{\sqrt{5}}, \\
\exp \Big( (2-c_0) n H\big( \frac{c_0}{2-c_0} \big) + O(\log n) \Big) & c \leq c_0 = 1 - \frac{1}{\sqrt{5}}. \end{cases}
\]

%
%
%
%
%

We compare this with the running time of the FPT algorithm, i.e. $3^k$. Since subexponential factors do not matter to us, this amounts to the equation
\[
    e^{(2-c)nH(\frac{c}{2-c})} = 3^{cn},
\]
as one easily finds that $e^{(2-c_0)H(\frac{c_0}{2-c_0})} > 3^c$ for $c < c_0$. Solving the equation above for $c$ gives a numerical value of $c \approx 0.7571$, and finally the desired runtime of $O^{*}(2.2973^n)$. Concluding:

\begin{Theorem}
Given two trees $T, T'$ on $n$ taxa, an unrooted maximum agreement
forest for $T$ and $T'$ can be computed in time $O^{*}(2.2973^n)$.
\end{Theorem}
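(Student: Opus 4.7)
The plan is to combine the two ingredients already at hand: Chen et al.'s FPT algorithm answering "Is $d_{\MAF}(T,T') \le k$?" in time $O^{*}(3^{k})$, and the preceding enumeration theorem that lists every convex character of $T$ of size at least $k$ in time $O^{*}(f(k))$, where $f(k) = \sum_{r=k}^{n}\binom{2n-r-1}{r-1}$. The algorithm proceeds in two phases. Fix a threshold $k' = \lceil cn \rceil$, with $c$ to be chosen. In Phase 1, invoke Chen et al.'s procedure for $k = 1, 2, \ldots, k'$ in turn; if it ever returns YES, output the agreement forest found. Otherwise we know $d_{\MAF}(T,T') > k'$, and Phase 2 kicks in: enumerate every character that is convex on $T$ and has more than $k'$ states, test each for convexity on $T'$ (linear time, \cite{BachooreB06}) and for being an agreement forest (polynomial time), and return the smallest one found. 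The minimum over both phases is $d_{\MAF}(T,T')$.

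For correctness, I would appeal to the observation (already noted in Section~\ref{Sec uMAF}) that any agreement forest is a convex character on both $T$ and $T'$. Hence if $d_{\MAF}(T,T') > k'$ then a maximum agreement forest lies inside the family of convex characters of $T$ of size exceeding $k'$, and is encountered by the enumeration. Phase 1 trivially returns the correct answer whenever $d_{\MAF}(T,T') \le k'$, since Chen's routine both detects this and reconstructs an agreement forest of the detected size.

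For the running time, Phase 1 costs $\sum_{k=1}^{k'} O^{*}(3^{k}) = O^{*}(3^{cn})$, and Phase 2 costs $O^{*}(f(cn))$. We choose $c$ so that $3^{cn}$ and $f(cn)$ agree (up to subexponential factors). Using the entropy bound $\binom{n}{pn} \le e^{nH(p)}$ and the unimodality of the summands of $f(k)$ (already established in the excerpt, with peak at $r_0 = (1-1/\sqrt{5})n + O(1)$), the dominant binomial coefficient in $f(cn)$ when $c \ge c_0 = 1-1/\sqrt{5}$ is $\binom{2n-cn-1}{cn-1}$, contributing $\exp\!\bigl((2-c)n\,H(c/(2-c))\bigr)$ up to polynomial factors. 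Equating with $3^{cn}$ and solving the one-variable transcendental equation
\[
    e^{(2-c)H(c/(2-c))} = 3^{c}
\]
numerically yields $c \approx 0.7571$; this lies above $c_0$, consistent with the case we used, and plugging back in gives a common value of roughly $2.2973^{n}$ for both phases. The overall algorithm therefore runs in $O^{*}(2.2973^n)$ time, as claimed.

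The main obstacle is the analytic handling of $f(k)$: one must justify replacing the whole sum by its largest term (via unimodality and a polynomial number of summands), verify that the chosen $c$ falls in the regime where the peak of the binomial sequence lies at $k$ rather than at $r_0$, and confirm numerically that the transcendental equation above has a unique crossing giving the value $2.2973$ when rounded up to four decimals. The remainder of the argument is essentially bookkeeping: both phases produce the same answer on their overlap, enumeration plus per-character testing is polynomial per character, and incrementing $k$ in Phase 1 only multiplies the cost by a polynomial factor since $3^{k}$ grows geometrically.
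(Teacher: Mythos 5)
Your proposal is correct and follows essentially the same route as the paper: run Chen et al.'s $O^{*}(3^{k})$ FPT algorithm up to a threshold $k'=cn$, then enumerate convex characters of $T$ with more than $k'$ states, and balance the two phases via the entropy bound and the unimodality of the summands of $f(k)$, arriving at the same equation $e^{(2-c)H(c/(2-c))}=3^{c}$ and $c\approx 0.7571$. The points you flag as obstacles (justifying the largest-term approximation, checking $c\geq c_0$, and the numerical solution) are exactly the steps the paper carries out.
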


\subsection{An $O^{*}(2.0649^n)$ algorithm for finding a rMAF}

A \emph{rooted} agreement forest is a variant of an agreement forest defined on rooted trees i.e. trees where there is a designated root and all arcs are oriented away from it. The corresponding optimisation problem \textsc{Rooted Maximum Agremeent Forest} (rMAF) is also NP-hard. We omit the definition and refer to recent survey articles such as \cite{BulteauW19} for an overview.

There is a $O(2.42^k \cdot \text{poly}(n))$ time algorithm for 
determining whether two rooted trees $T, T'$ have a rooted agreement forest of size at most $k$  \cite{WhiddenBZ13}. In the worst case iterative deepening will yield an algorithm with running time $O^{*}(2.42^n)$. Prior to this article this was the best-known exponential-time (as opposed to parameterised) algorithm for this problem. With the same arguments as in the previous section  we can reduce the base of the exponent. It is easy to show that, given a convex character on $T$, one can test in polynomial time if this induces a rooted agreement forest for $T$ and $T'$. Thus we only have to solve the following equation for $c$.
\[
    2.42^{cn} = e^{(2-c)nH(\frac{c}{2-c})}
\]
This yields $c \approx 0.8204$,
which in turn results in a runtime of $O^{*}(2.0649^n)$.

\begin{Theorem}
Given two rooted trees $T, T'$ on $n$ taxa, a rooted maximum agreement
forest for $T$ and $T'$ can be computed in time $O^{*}(2.0649^n)$.
\end{Theorem}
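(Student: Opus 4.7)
The plan is to mirror the structure of the unrooted case almost verbatim, adapting only the FPT constant. First I would run the algorithm of Whidden et al.~\cite{WhiddenBZ13} iteratively, testing for $k = 1, 2, \ldots$ whether a rooted agreement forest of size at most $k$ exists, until either the algorithm returns YES (in which case we are done) or $k$ exceeds a threshold $k' = cn$ still to be chosen. The cumulative cost of this first phase is $O^{*}(2.42^{cn})$, since the final iteration dominates by a constant factor.

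If the threshold is exceeded, we switch to enumeration. The crucial observation is the same bridge as before: the partition of $X$ induced by any rooted agreement forest $F = \{X_1, \ldots, X_m\}$ is a convex character on $T$, because condition (2) in the definition of agreement forest forces the induced minimal spanning trees of the blocks to be vertex-disjoint in $T$. Therefore any rMAF with more than $k'$ components must be one of the convex characters of $T$ of size at least $k'$, and the dynamic programming from Section~\ref{Sec uMAF} enumerates these in time $O^{*}(f(k'))$, where $f(k')$ obeys the bound $f(k') \leq \exp\bigl( (2-c) n H(c/(2-c)) + O(\log n) \bigr)$ derived in the previous subsection. (The DP is purely combinatorial on unordered partitions, so rooting $T$ to drive the recursion is harmless for the rooted problem too.) For each enumerated character we test in polynomial time whether it defines a rooted agreement forest between $T$ and $T'$, retaining the smallest one found.

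To balance the two phases I would set $2.42^{cn}$ equal to the enumeration bound, i.e.\ solve
\[
2.42^{cn} = e^{(2-c)n H(c/(2-c))},
\]
which yields $c \approx 0.8204$ and a common value of $\approx 2.0649^n$. One should also verify that this optimum lies in the regime $c \geq c_0 = 1 - 1/\sqrt{5}$ so that the first branch of the piecewise bound on $f(k)$ applies, exactly as in the unrooted argument. The only genuinely new step relative to the uMAF proof is confirming that ``given a convex character, decide whether it induces a rooted agreement forest'' runs in polynomial time; this is where I would expect a small obstacle, but it reduces to checking, block by block, that $T|X_i$ and $T'|X_i$ agree as rooted phylogenetic trees and that the restricted minimal subtrees are pairwise vertex-disjoint in both inputs, each of which is linear-time per component.
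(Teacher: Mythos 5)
Your proposal is correct and follows essentially the same route as the paper: run the $O^{*}(2.42^k)$ algorithm of Whidden et al.\ up to a threshold $k'=cn$, then enumerate convex characters of $T$ with at least $k'$ states via the dynamic program from the unrooted section, test each in polynomial time for being a rooted agreement forest, and balance $2.42^{cn}$ against $e^{(2-c)nH(c/(2-c))}$ to obtain $c\approx 0.8204$ and the stated bound. The paper's own proof is in fact terser than yours; your additional checks (the regime $c\geq c_0$ and the polynomial-time verifiability of rooted agreement forests) are points the paper asserts without elaboration.
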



\section{Maximum parsimony distance on two-state characters}\label{Sec DM2 Theory}

Recall from the preliminaries the definition of the \emph{parsimony score} $l_f(T)$ where $f$ is a character and $T$ is a phylogenetic tree on $X$. This is the foundation of the following optimisation
problem, introduced in \cite{fischer2014,moulton2015}. Informally, the goal is to identify a character that has high parsimony score on one of the input trees and small on the other.

\begin{Definition}
Given two phylogenetic trees $T$ and $T'$ on the same set $X$ of taxa, the parsimony distance between these trees is defined as
\[
    d_{MP}(T, T') = \max_f |l_f(T) - l_f(T')|
\]

A character f which maximises this distance is called an optimal character. 
\end{Definition}

Note that, due to the fact that the parsimony score of a tree (for a given character) is not affected by the presence or absence of a root, parsimony distance is oblivious to whether the input trees are rooted or unrooted.

The problem is NP-hard and has attracted quite some attention in recent years. See recent articles such as \cite{jones2021maximum,van2020reflections} for an overview of its theoretical and practical significance; in particular, it can be leveraged to efficiently generate very strong lower bounds on $d_\MAF(T,T')$. Kelk and Stamoulis \cite{KelkS17} described and implemented an $O^{*}(1.6181^n)$ algorithm that calculates $d_{MP}$ for two trees. The algorithm exploits the fact, proven in \cite{KelkF17}, that there exists an optimal character $f$ which is convex on one of the two input trees and which has at least two taxa per state.

In this section we study a different variant of the problem,
known as $d_{MP}^2$. The definition is the same except that
we are restricted to characters $f$ that only have two states. This problem, which yields lower bounds on $d_{MP}$, is also NP-hard \cite{KelkF17} and permits a trivial
$O^{*}(2^n)$ time algorithm simply by guessing the state of each taxon in $X$; see also \cite{KelkISW16} for further discussion of the problem. Interestingly, the $O^{*}(1.6181^n)$ time algorithm for the $d_{MP}$ problem does not work for $d^2_{MP}$. This is because the pivotal assumption, that there exists an optimal
character that is convex on one of the two trees, no longer holds when restricted to just two states. Nevertheless, with some effort we can establish a link with convex characters. We will use this link to prove the same $O^{*}(1.6181^n)$ runtime for $d_{MP}^2$ - and then via matchings we will reduce the base of the exponent further.\\
\\
We say that an extension $g$ of a character $f$ on a tree $T$ 
    is \emph{optimal} if $l_{g}(T) = l_{}(T)$ i.e. it is an extension which achieves the minimum number of mutations. From this point on we will refer to the states in a two-state character as \emph{red} and \emph{blue}. 


Before proving our main theorem we
first require some auxiliary lemmas.

\begin{Lemma} \label{Lemma internal node neighbor color}
    Let $T$ be a tree on $X$ and $f_2$ a two-state character on $X$. Let $g_2$ be an optimal extension of $f_2$ to $T$. For every internal node $u$ of $T$, at most one neighbour of $u$ has a colour different to $g_2(u)$. 
\end{Lemma}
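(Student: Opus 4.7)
My plan is to prove the lemma by a straightforward local exchange argument: if $u$ had two or more neighbours of a colour different from $g_2(u)$, then flipping $u$'s colour would yield a strictly cheaper extension, contradicting the optimality of $g_2$.

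First I would record the relevant structural facts. Since $T$ is an unrooted binary phylogenetic $X$-tree, every internal node (in particular $u$) has degree exactly $3$, and because $u$ is internal it is not a leaf and therefore $f_2$ imposes no constraint on $g_2(u)$. Since $f_2$ is a two-state character, ``red'' and ``blue'' exhaust the palette, so if at least two of the three neighbours of $u$ have a colour different from $g_2(u)$, they must all be the same colour, namely the one opposite to $g_2(u)$.

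Next I would construct the competitor extension $g_2'$ which agrees with $g_2$ on every vertex except $u$, where $g_2'(u)$ is set to the opposite colour of $g_2(u)$. Since $u$ is internal, $g_2'$ is still an extension of $f_2$. The parsimony counts $l_{g_2}(T)$ and $l_{g_2'}(T)$ differ only on the three edges incident to $u$: if $b \in \{2,3\}$ of the neighbours of $u$ disagreed with $g_2(u)$, these edges contributed $b$ mutations under $g_2$, and they contribute $3-b \le 1$ mutations under $g_2'$. Hence $l_{g_2'}(T) = l_{g_2}(T) + (3-2b) \le l_{g_2}(T) - 1 < l_{g_2}(T)$, contradicting the assumption that $g_2$ is an optimal extension.

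There is no real obstacle here; the only subtlety worth mentioning in the write-up is noting explicitly that the internal status of $u$ is what permits the relabelling (so the argument genuinely uses the hypothesis that $u$ is internal), and that the binary-degree and two-state hypotheses together are what force at least two differing neighbours to share a single colour, making the flip unambiguously beneficial.
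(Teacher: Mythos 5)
Your proposal is correct and is essentially the same argument as the paper's: the paper also flips the colour of $u$ and observes that this removes $3$ mutations in the three-disagreeing-neighbours case and $1$ in the two-neighbours case, contradicting optimality. Your write-up merely unifies the two cases via the parameter $b$ and the count $3-2b$, which is a fine presentational choice.
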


\begin{proof}
    Suppose that there is an internal node $u$ which does have more than one neighbour coloured with a colour different to $g_2(u)$. This means that it has two or three neighbours with a different colour. Suppose it has three. Let w.l.o.g. $u$ be red and all its neighbours blue. Flipping $u$ to blue would reduce the number of mutations by three, contradicting the fact that $g_2$ is an optimal extension. Suppose it has two. In this case flipping the colour of $u$ reduces the number of mutations by one which again contradicts the fact that $g_2$ is optimal.
\end{proof}

We obtain a small but useful corollary 
thanks to this lemma. 

\begin{Corollary} \label{Corollary leafs are taxon}
Let $T$ be a tree on $X$ and $f_2$ a two-state character on $X$. Let $g_2$ be an optimal extension of $f_2$ to $T$. Consider the forest of connected components obtained by deleting the mutation edges in $g_2$, i.e. edges $\{u,v\}$ where $g_2(u) \neq g_2(v)$. Every vertex with degree 0 in this forest is labelled by a taxon from $X$, and every leaf (i.e. degree 1 node) in this forest is labelled by a taxon from $X$.
\end{Corollary}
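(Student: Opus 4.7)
The plan is to argue the contrapositive: I want to show that every internal node of $T$ has degree at least $2$ in the monochromatic forest obtained after deleting the mutation edges of $g_2$. Once this is established, any vertex of degree $0$ or $1$ in the forest cannot be internal, so it must be a leaf of $T$, and leaves of $T$ are precisely the taxa in $X$ by the definition of a phylogenetic tree.

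First I would fix an arbitrary internal node $u$ of $T$. Since $T$ is a binary phylogenetic $X$-tree, $u$ has degree exactly $3$ in $T$. By Lemma \ref{Lemma internal node neighbor color}, at most one of the three neighbours of $u$ carries a colour different from $g_2(u)$ under the optimal extension $g_2$. Consequently, at least two of the three edges incident to $u$ are non-mutation edges, i.e. their endpoints share the colour of $u$. These edges survive in the forest after deletion of the mutation edges, so $u$ has degree at least $2$ there. In particular, $u$ is neither isolated nor a leaf of the forest.

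Now I would conclude by contraposition: suppose $v$ is a vertex of the forest with degree $0$ or $1$. By the argument above $v$ cannot be an internal node of $T$, so $v$ must be a leaf of $T$. Since the leaves of $T$ are bijectively labelled by $X$, $v$ is labelled by a taxon. Both cases stated in the corollary (isolated vertex and leaf of the forest) are handled uniformly by this single observation.

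There is essentially no technical obstacle here; the statement is an immediate structural consequence of Lemma \ref{Lemma internal node neighbor color} combined with the fact that internal vertices in a binary phylogenetic tree have degree $3$. The only thing to be a bit careful about is the convention that $T$ is binary (so that internal degree equals $3$), which is already fixed in Section \ref{Sec Pre}; without this assumption the claim could fail at internal nodes of degree $2$, but such nodes do not arise.
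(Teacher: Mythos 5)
Your proof is correct and follows essentially the same route as the paper: both arguments reduce the claim to Lemma \ref{Lemma internal node neighbor color} together with the fact that internal nodes of a binary phylogenetic tree have degree $3$, so that such a node retains at least two incident non-mutation edges in the forest. The paper phrases this as a proof by contradiction rather than your contrapositive formulation, but the content is identical.
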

\begin{proof}
If there is a degree-0 node $u$ in the forest that is not labelled by a taxon from $X$, then $u$ is an internal node of $T$, and the colour assigned to $u$ by $g_2$ is distinct from that assigned to all 3 of its neighbours; this contradicts Lemma \ref{Lemma internal node neighbor color}. Similarly, suppose there is a leaf $u$ in a connected component of the forest that is not labelled by a taxon. The leaf must, again, be an internal node of $T$. Meaning that, in $g_2$, it has two neighbours with a different colour to $g(u)$. This also contradicts Lemma \ref{Lemma internal node neighbor color}.
\end{proof}



The following lemma resembles  Observation 3.3 from Kelk and Fischer \cite{KelkF17} but is now rewritten in our notation and with an extended proof.

\begin{Lemma} \label{Lemma 2 taxon ccc}
Let $T$ and $T'$ be the input to the problem $d^2_{MP}$. There
exists an optimal two-state character $f_2$, a tree
$T^{*} \in \{ T, T'\}$ and an optimal extension $g_2$ of $f_2$
to $T^{*}$, such that every connected component of the
forest obtained by cutting mutation edges of $T^{*}$ (with
respect to $g_2$), contains at least two taxa.
\end{Lemma}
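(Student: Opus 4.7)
The plan is to begin with an arbitrary optimal two-state character $f_2$ and an optimal extension on one of the two trees, and to repeatedly flip the colour of any taxon that forms a singleton component in the induced mutation-cut forest until none remain. The argument splits into three pieces: setting up the iteration, proving that a single flip preserves optimality while strictly decreasing parsimony on one tree, and ruling out degeneracies.

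First I would dispose of the trivial case $d_{MP}^2(T,T')=0$ by directly exhibiting a character whose two blocks each have at least two taxa and which is convex on one of the trees with the right component structure. In the main case $d_{MP}^2>0$, let $f_2$ be any optimal character and, without loss of generality, assume $l_{f_2}(T)>l_{f_2}(T')$, so that I take $T^*=T'$ with an optimal extension $g_2$. By Corollary~\ref{Corollary leafs are taxon}, every component of the mutation-cut forest of $T^*$ with fewer than two taxa must consist of a single degree-$0$ vertex labelled by a taxon $x$.

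The key flipping step relies on one general fact: changing the colour of a single taxon alters the parsimony score on any tree by at most one (proved by re-using an optimal extension with that taxon flipped). Suppose $x$ is such a singleton component on $T^*$, coloured red say, with its unique neighbour $v$ coloured blue under $g_2$. Let $f_2'$ be $f_2$ with $x$ flipped, and let $g_2'$ be the analogous modification of $g_2$. Only the edge $\{x,v\}$ is affected, and it switches from being a mutation edge to not, so $l_{g_2'}(T^*)=l_{g_2}(T^*)-1$; combined with the one-taxon bound this forces $l_{f_2'}(T^*)=l_{f_2}(T^*)-1$ and shows that $g_2'$ is optimal for $f_2'$ on $T^*$. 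Applying the same bound to $T$ gives $l_{f_2'}(T)\ge l_{f_2}(T)-1$, and chaining the two inequalities yields
\[
l_{f_2'}(T)-l_{f_2'}(T^*)\;\ge\;l_{f_2}(T)-l_{f_2}(T^*)\;=\;d_{MP}^2(T,T'),
\]
so $f_2'$ is also optimal, the strict inequality between the two scores is preserved, and $T^*=T'$ remains the smaller-score tree. In the new forest on $T^*$ the component of $x$ simply merges into the component containing the internal node $v$; since internal nodes cannot be degree-$0$ components (again by Corollary~\ref{Corollary leafs are taxon}), no new singletons are created, so the total number of singleton-taxon components strictly decreases.

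Iterating produces a sequence of optimal characters with strictly decreasing $l(T^*)$, so the process terminates. The main obstacle I anticipate is verifying that the flipping never produces a one-state character: if $x$ were the only taxon of its colour class, the result would not be a genuine two-state character. This is ruled out by the preliminary observation that any two-state character with a colour class of size $1$ has parsimony score exactly $1$ on \emph{both} $T$ and $T'$ and hence distance $0$; since optimality is preserved throughout the iteration and the initial distance is positive, no colour class can ever shrink to size $1$, and the flip is always safe. At termination, no singleton component remains in the forest on $T^*$, giving the desired $f_2$, $T^*$ and $g_2$.
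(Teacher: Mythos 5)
Your proof is correct and follows essentially the same route as the paper's: starting from an optimal character, repeatedly flip the colour of any taxon forming a singleton component in the mutation-cut forest of the lower-score tree, using Corollary~\ref{Corollary leafs are taxon} to identify singletons with taxa and the fact that a single flip reduces the score on that tree by exactly one while reducing the other tree's score by at most one, so optimality is preserved and the iteration terminates. The only difference is that you explicitly dispose of the degenerate cases (the instance with $d^2_{MP}=0$, and the possibility of a colour class shrinking to size one under flipping), which the paper's proof passes over silently; these are welcome but minor additions to the same argument.
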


\begin{proof}
Let $f_2$ be an optimal character. Assume without loss of generality that $l_{f_2}(T) \leq l_{f_2}(T')$.
Let $g_2$ be an optimal extension of $f_2$ to $T$ and consider the forest induced by cutting the mutation edges of $g_2$ in $T$.
Suppose that there is a connected component in the forest with a single node $x$. From Corollary \ref{Corollary leafs are taxon} we know that $x$ is labelled by a taxon, and therefore must have had a different colour to its unique parent in $g_2$. Flipping the colour of $x$ will create a new two-state character $f'_2$. Let $g'_2$ be the extension of $f'_2$
to $T$ which assigns states to interior nodes
of $T$ in the same way as $g_2$. Extension $g'_2$ has one fewer mutation than $g_2$, so 
$l_{f'_2}(T) \leq l_{f_2}(T) - 1$. However,
$l_{f'_2}(T') \geq l_{f_2}(T') - 1$. To see why
this is, observe that flipping the state of a single taxon can reduce the parsimony score of a character on a given tree by at most one. Hence,
$l_{f'_2}(T') - l_{f'_2}(T) \geq l_{f_2}(T') - l_{f_2}(T) = d^2_{MP}(T,T')$, so $f'_2$ is also an optimal character, and $g'_2$ is an optimal extension of it to $T$. Observe that the forest induced by $g'_2$ has one fewer singleton component than $g_2$. We iterate this process until we have an optimal two-state character $f^{*}_2$, and a corresponding optimal extension $g^{*}_2$ of it to $T$, such that no singleton components appear in the induced forest. At this point, every connected component in the induced forest contains at least two nodes, and thus at least two leaves; by Corollary \ref{Corollary leafs are taxon} these leaves must be labelled by taxa. So we are done.
\end{proof}

We define a \emph{partial} extension of a character to a tree $T$ similarly to an extension, except that some interior nodes of $T$ potentially have no state assigned to them; we call these nodes \emph{uncovered}.

\begin{Definition}
    Consider a convex character f on a tree $T$. The natural partial extension is the partial extension obtained by, for each state, taking the minimal connecting subtree of all the taxa within that state. If this partial extension covers every node of $T$ (i.e. it is an extension) we call it the natural covering extension.
\end{Definition}

\begin{Lemma} \label{Lemma optimal contain natural}
    Let $f$ be a convex character on a tree $T$. Every optimal extension of $f$ to $T$ can be obtained by expanding the natural partial extension of $f$ to $T$ to the uncovered nodes. In other words, the natural partial extension is unavoidable in an optimal extension.
\end{Lemma}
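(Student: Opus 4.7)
The plan is to compare the number of monochromatic components induced by an arbitrary optimal extension with the lower bound forced by convexity. Throughout, for an extension $h$ of $f$ to $T$, let $\mathcal{F}(h)$ denote the forest obtained from $T$ by deleting the mutation edges of $h$. A standard fact about trees gives that the number of connected components of $\mathcal{F}(h)$ equals $l_h(T) + 1$.

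First I would observe that for every state $c \in \mathcal{C}$, the taxa labelled $c$ all receive colour $c$ under $h$, so they lie in components of $\mathcal{F}(h)$ coloured $c$; in particular, there is at least one component coloured $c$. Summing over all states shows that $\mathcal{F}(h)$ has at least $|\mathcal{C}|$ components. Since $f$ is convex and $h$ is optimal, $l_h(T) = |\mathcal{C}| - 1$, so $\mathcal{F}(h)$ has exactly $|\mathcal{C}|$ components; hence for each state $c$ there is exactly one monochromatic component $K_c$ of colour $c$, and it contains every taxon of colour $c$.

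Next I would use connectedness of $K_c$ in $T$: because $K_c$ is a connected subgraph of $T$ containing all taxa of colour $c$, it must contain the minimal subtree $S_c$ of $T$ spanning those taxa. Every node of $S_c$ is therefore assigned colour $c$ by $h$. But $S_c$ is precisely the set of nodes the natural partial extension assigns to state $c$. Running this argument over all $c \in \mathcal{C}$ shows that $h$ agrees with the natural partial extension on every covered node, so $h$ is obtained from the natural partial extension by assigning states only to the uncovered nodes, as required.

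There is no real obstacle; the only point that deserves a brief sanity check is the identity ``mutation edges $=$ components $-\,1$'' for a forest obtained from a tree by edge deletion, and the observation that convexity of $f$ implies $l_h(T)=|\mathcal{C}|-1$ at optimum (this is the defining equation of convexity recalled in the preliminaries). Everything else is a short pigeonhole/connectedness argument.
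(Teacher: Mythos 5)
Your proof is correct and follows essentially the same route as the paper's: both hinge on counting the components of the forest obtained by cutting mutation edges (which number $l_h(T)+1$), noting that each state must occupy at least one component, and invoking the convexity identity $l_f(T)=|\mathcal{C}|-1$. The paper phrases this as a proof by contradiction while you argue directly (exactly one component per state, which must contain the minimal spanning subtree), but the underlying argument is the same.
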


\begin{proof}
Suppose there is an optimal extension $g$ of $f$ to $T$ that is not an expansion of the natural partial extension. Then, for some state $s$ of $f$, at least one node on its minimal connecting spanning tree is not assigned to state $s$. As a result, at least one edge on the minimal connecting spanning tree for $s$ is a mutation edge in $g$. Hence, if one cuts mutation edges in $g$, the state $s$ will occur in two or more connected components of the resulting forest. Every state of $f$ must occur in at least one connected component of the forest, so the number of connected components in the forest is at least equal to the number of states in $f$ plus 1. The number of connected components in this forest is $l_f(T)+1$, so $l_f(T)$ is at least equal to the number of states in $f$. However, a convex character on $T$ has by definition the property that its parsimony score on $T$ is equal to the number of states in the character \emph{minus one}. Contradiction. 

\end{proof}

The following lemma is actually somewhat stronger than we need - we only require one direction of the implication - but for completeness we prove both directions.

\begin{Lemma} 
\label{strongUnique}
    Let $f$ be a convex character on a tree $T$. The natural covering extension of $f$ exists if and only if there is a \emph{unique} optimal extension of $f$ to $T$.
\end{Lemma}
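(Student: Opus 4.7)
The forward direction is an immediate consequence of Lemma~\ref{Lemma optimal contain natural}. If the natural covering extension of $f$ exists, then by definition every interior vertex already receives a state from the natural partial extension, so there are no uncovered vertices; Lemma~\ref{Lemma optimal contain natural} then forces this covering extension to be the unique optimal extension.

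For the converse, suppose the natural covering extension does not exist, so there is an uncovered interior vertex $u$. Since $T$ is binary, $u$ has degree $3$ and removing $u$ yields three components $T_1, T_2, T_3$. Because $u$ lies on no minimal spanning subtree, for each state $s$ all taxa labelled $s$ must lie in a single $T_i$ (otherwise the minimal spanning subtree for $s$ would pass through $u$, contradicting that $u$ is uncovered). Letting $C_i \subseteq \C$ denote the set of states with taxa in $T_i$, we get a partition $\C = C_1 \sqcup C_2 \sqcup C_3$ with each $C_i$ nonempty (each $T_i$ contains at least one taxon, namely any leaf of $T$ in that component). The plan is to exhibit two optimal extensions $g_1, g_2$ with $g_1(u) \in C_1$ and $g_2(u) \in C_2$, which in particular gives $g_1 \neq g_2$.

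To build such a $g$ with $g(u) = s$ for a suitable $s \in C_i$, I would pass to the contracted tree $\tilde T$ obtained by collapsing each minimal spanning subtree of a state to a single ``terminal'' vertex $r_s$; uncovered vertices of $T$ become degree-$3$ interior vertices of $\tilde T$. Pick $s \in C_i$ so that $r_s$ is the \emph{first} terminal encountered when walking from $u$ into the branch of $\tilde T$ corresponding to $T_i$ (such a terminal exists since $C_i$ is nonempty). Colour $u$ and every intermediate uncovered vertex on the $u$-to-$r_s$ path with state $s$; by the choice of $s$ these intermediates are all uncovered, so this does not overwrite any minimal spanning subtree. The remaining uncovered vertices fall into smaller subtrees of $\tilde T$ which each still contain at least one terminal, and can be coloured recursively so that every state induces a connected region --- yielding an optimal extension by the characterisation of optimal extensions as those with $|\C|-1$ mutations (equivalently, each state-region connected). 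Performing this construction once with a state in $C_1$ and once with a state in $C_2$ produces $g_1$ and $g_2$, completing the proof.

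The main obstacle is justifying the recursive completion: one must show that any contracted tree with a designated terminal per state always admits a partition into $|\C|$ connected classes, one containing each terminal. I expect this to follow by a short induction on the number of uncovered vertices, using the key observation (which is forced by $T$ being binary and $u$ uncovered) that every uncovered vertex of $\tilde T$ has a terminal in each of its three branches; this branch-containment property supplies the flexibility needed to peel off one uncovered vertex at a time without breaking connectivity, reducing to a strictly smaller instance.
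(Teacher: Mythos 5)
Your proof is correct, and its forward direction coincides with the paper's (both deduce it immediately from Lemma~\ref{Lemma optimal contain natural}). For the converse you take a genuinely different route. The paper also starts from an uncovered interior vertex $u$ and the pairwise disjointness of the state sets $S_p,S_q,S_r$ seen in the three branches at $u$, but it then roots $T$ on the edge $\{u,r\}$ and runs Fitch's algorithm: the bottom-up phase assigns the root the disjoint union $S'_p\cup S'_q\cup S'_r$, and two different top-down choices of root state force $u$ to receive two different states, so non-uniqueness is outsourced to the (quoted) correctness of Fitch. You instead build the two optimal extensions by hand: contract each state's minimal spanning subtree to a terminal, colour a terminal-free path from $u$ into a chosen branch, and complete recursively. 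This is more self-contained and makes the optimality criterion transparent (a partition of a tree into $|\mathcal{C}|$ connected classes has exactly $|\mathcal{C}|-1$ boundary edges, matching $l_f(T)$ for convex $f$); the price is the completion lemma you flag, which does need to be written out but is routine: every uncovered vertex keeps degree $3$ in the contracted tree $\tilde T$ (two neighbours of an uncovered vertex cannot lie in the same spanning subtree without covering it), so every leaf of $\tilde T$ is a terminal, hence every component left after deleting your coloured path contains a terminal, and rooting each component at one of its terminals and assigning every vertex to the first terminal on its path to the root yields the required connected classes. With that spelled out, your argument is a complete and valid alternative to the paper's.
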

\begin{proof}
The fact that the existence of the natural covering extension implies uniqueness, follows immediately from Lemma \ref{Lemma optimal contain natural}. To prove the other direction, we use the contrapositive. Assume that the natural covering extension does \emph{not} exist. Then there exists an interior node $u$ that is uncovered by the natural partial extension. Let $g$ be an optimal
extension of $f$ to $T$. We will use an algorithmic argument to prove that $g$ cannot be unique. Let $p, q,r$ be the three neighbours of $u$ in $T$ and let $T_w$, where $w \in \{p,q,r\}$, be the pendant subtree of $T$ rooted at $w$. Let $S_w$, where $w \in \{p,q,r\}$, be the set of states appearing on taxa in $T_w$, with respect to $f$. Crucially, $S_p$, $S_q$ and $S_r$ are mutually disjoint, because otherwise $u$ would have been covered in the natural partial extension. Now, we root $T$ by subdividing the edge $\{u,r\}$. We run Fitch's algorithm on $T$ to compute an optimal extension of $f$ to $T$; we provide a summary of the algorithm in Appendix A. In the bottom-up phase of Fitch's algorithm, a nonempty set $S'_w \subseteq S_w$ of states will be assigned to node $w$, $w \in \{p,q,r\}$. By the disjointness of the $S_w$, the $S'_w$ are also disjoint. Hence, in the bottom-up phase the set $S'_p \cup S'_q \cup S'_r$ of states will be assigned to the root. Now, in the top-down phase, we are allowed to assign any state from $S'_p \cup S'_q \cup S'_r$ to the root. If we pick a state $s \in S'_p$, the node $u$ will definitely be assigned state $s$. If we pick a state $t \in S'_q$, $u$ will definitely be assigned $t$. Given that $s \neq t$, due to disjointness of $S'_q$ and $S'_p$, and the fact that all possible executions of the top-down phase of Fitch's algorithm produce an optimal extension, we conclude that there exists more than one optimal extension of $f$ to $T$.
\end{proof}

Let $T$ and $T'$ be the input to the problem $d^2_{MP}$. Let $f_2, g_2$ and $T^{*}$ be the structures 
whose existence is guaranteed
by Lemma \ref{Lemma 2 taxon ccc}. Consider the forest obtained by cutting mutation edges (with respect to $g_2$) in $T^{*}$. Let $f_c$ be the character (i.e. partition of $X$) induced by the connected components of the forest. By the same lemma, each state of $f_c$ contains at least two taxa. Moreover, by construction, $f_c$ is convex on $T^{*}$. Next, we have a crucial lemma.

\begin{Lemma} \label{Lemma partial is covering}

    The natural partial extension of $f_c$ to $T^{*}$ is a natural covering extension.
\end{Lemma}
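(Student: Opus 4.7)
The plan is to unpack what $f_c$ really is and match its natural partial extension with the forest components produced by $g_2$. Concretely, each state $s$ of $f_c$ corresponds, by construction, to a connected component $C_s$ of the forest obtained from $T^{*}$ by cutting the mutation edges of $g_2$; the taxa in state $s$ are precisely the taxa of $X$ that lie in $C_s$. The natural partial extension assigns state $s$ only to the nodes of the minimal subtree $M_s$ of $T^{*}$ spanning the taxa of $s$, so I want to prove that in fact $M_s = C_s$ for every state $s$. Once this is established, we are done: because the forest components partition $V(T^{*})$, the sets $\{C_s\}_s = \{M_s\}_s$ jointly cover every vertex of $T^{*}$, which is exactly the definition of the natural partial extension being a natural covering extension.

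The one non-trivial step is the identity $M_s = C_s$. The inclusion $M_s \subseteq C_s$ is immediate, since $C_s$ is a connected subgraph of $T^{*}$ containing all taxa of state $s$, and $M_s$ is the smallest such. For the reverse inclusion I would argue by contradiction: if $C_s \setminus M_s$ were non-empty, then picking a vertex $v \in C_s \setminus M_s$ that maximises the distance (in $C_s$) from $M_s$ would force $v$ to be a leaf of $C_s$, i.e.\ a degree-$1$ vertex in the forest. This is where Corollary \ref{Corollary leafs are taxon} delivers the punchline: every leaf of the forest is labelled by a taxon from $X$. Since $v$ lies in $C_s$, the taxon $v$ belongs to state $s$, and hence by the definition of $M_s$ we must have $v \in M_s$, contradicting $v \in C_s \setminus M_s$.

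The main (mild) obstacle is the bookkeeping around what the corollary actually gives us: it speaks about degree-$0$ and degree-$1$ nodes of the whole forest, and I want to apply it to ``boundary'' vertices of a single component $C_s$, so I need to observe that a leaf of $C_s$ viewed as a subgraph of $T^{*}$ is exactly a degree-$1$ node in the forest (possibly also a degree-$0$ node if $C_s$ is a single vertex, in which case the same corollary still forces it to be a taxon). Modulo this small sanity check, the argument is short, and the proof is then a one-line combination of $M_s = C_s$ with the fact that the components $C_s$ cover $V(T^{*})$.
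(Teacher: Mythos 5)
Your proof is correct and rests on the same key fact as the paper's: by Corollary \ref{Corollary leafs are taxon}, every leaf (or isolated vertex) of the forest obtained from $g_2$ is a taxon, which forces each forest component to coincide with the minimal spanning tree of the taxa it contains, so the components jointly cover $V(T^{*})$. The paper packages this as ``every internal node has forest-degree at least two and hence lies on a path between two leaves of its component'' rather than your extremal farthest-vertex argument, but the two are essentially the same proof.
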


\begin{proof}
    Recall that $f_c$ was constructed from the optimal extension $g_2$ of the optimal two state character $f_2$. We know from Lemma \ref{Lemma internal node neighbor color} that, in $g_2$, every internal node $u$ of $T^{*}$ must have at least two neighbours with the same colour as $u$.  Thus, in the forest obtained from $g_2$, every internal node must have degree 2 or 3 (and every component of the forest contains at least 2 taxa). Furthermore we also know from Lemma \ref{Lemma 2 taxon ccc} that every leaf in a component of the forest is labelled by a taxon. This means that for every internal node $u$ of $T^{*}$, there exist two distinct taxa $x, y \in X$ such that $x$ and $y$ are assigned the same state by $f_c$, and $u$ lies on the unique path from $x$ to $y$ in $T^{*}$. Given that the natural partial extension was created using minimal spanning trees, it follows that the natural partial extension assigns a state to every internal node of $T^{*}$, and thus is a natural covering extension.   
\end{proof}

From Lemma \ref{strongUnique} and \ref{Lemma partial is covering} we obtain the following corollary.

\begin{Corollary} \label{Corollary unique extension}
    The convex character $f_c$ has a \emph{unique} optimal extension on $T^{*}$, which is the natural covering extension.
\end{Corollary}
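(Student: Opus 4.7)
The plan is to chain together the two lemmas cited just above the statement, with Lemma \ref{Lemma optimal contain natural} providing the glue. First I would invoke Lemma \ref{Lemma partial is covering} to conclude that the natural partial extension of $f_c$ to $T^{*}$ is in fact a natural covering extension, i.e.\ every node of $T^{*}$ already receives a state under the natural partial extension. In particular, a natural covering extension of $f_c$ to $T^{*}$ exists. Then I would apply the ``if'' direction of Lemma \ref{strongUnique}: existence of the natural covering extension implies that there is a unique optimal extension of $f_c$ to $T^{*}$.

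It remains to identify this unique optimal extension as the natural covering extension. For this I would appeal to Lemma \ref{Lemma optimal contain natural}, which says that every optimal extension of $f_c$ to $T^{*}$ is obtained by expanding the natural partial extension to the uncovered nodes. Since by Lemma \ref{Lemma partial is covering} the natural partial extension has no uncovered nodes, the expansion is trivial and the optimal extension coincides with the natural partial (equivalently, natural covering) extension. The main obstacle is essentially nil: the corollary is a bookkeeping step combining results already established, and the only thing to check is that the two lemmas are applied in the right order so that ``uniqueness'' is correctly matched with ``equals the natural covering extension''.
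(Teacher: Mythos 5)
Your proof is correct and follows essentially the same route as the paper, which derives the corollary directly from Lemma \ref{strongUnique} and Lemma \ref{Lemma partial is covering}; your additional appeal to Lemma \ref{Lemma optimal contain natural} to identify the unique optimal extension as the natural covering extension just makes explicit a step the paper leaves implicit.
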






In the remainder of the paper we shall call convex characters where the natural covering extension exists (which by Lemma \ref{strongUnique} are exactly those characters with unique optimal extensions) \emph{covering convex characters}. We have shown that $f_c$ is such a character\footnote{Note that not all convex characters
are covering. For example, consider the unique phylogenetic
tree on 3 taxa. If each of these three taxa is assigned
a different state by a character, the single interior node of the tree will be uncovered by the natural partial extension. The 8 convex characters shown in Figure \ref{fig:g2} are, however, covering.}. Now we have everything to prove our main theorem.

\begin{Theorem} \label{Theorem two state char is related to ccc}
    Let $T$ and $T'$ be the input to the problem $d^2_{MP}$. There exists a covering convex character $f_c$ on one of the trees, say $T$, such that (i) each state of $f_c$ contains at least 2 taxa and (ii) the states of $f_c$ can in polynomial time be relabelled red and blue, obtaining  a 2 state character $f_2$, such that $d_{MP}^2(T,T') = |l_{f_2}(T) - l_{f_2}(T')| = l_{f_2}(T') - l_{f_2}(T)$.
\end{Theorem}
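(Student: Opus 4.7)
The plan is to let the preceding lemmas do essentially all of the work and simply assemble the pieces. First I would apply Lemma \ref{Lemma 2 taxon ccc} to obtain an optimal two-state character $f_2^{*}$, a tree $T^{*} \in \{T,T'\}$ (take $T^{*}=T$ without loss of generality), and an optimal extension $g_2$ of $f_2^{*}$ to $T$ such that every connected component $C$ of the mutation-cut forest contains at least two taxa. Then I would define $f_c$ exactly as in the paragraph immediately preceding Lemma \ref{Lemma partial is covering}: the blocks of $f_c$ are the taxa sets $C \cap X$ of the forest components.

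Next I would verify the required properties of $f_c$ in turn. Convexity on $T$ is immediate, since the components of the mutation-cut forest are vertex-disjoint subtrees of $T$ each containing the taxa of a single block. Condition (i) is exactly what Lemma \ref{Lemma 2 taxon ccc} delivers. For the \emph{covering convex} property, Lemma \ref{Lemma partial is covering} shows that the natural partial extension of $f_c$ to $T$ covers every internal node, so it is a natural covering extension; Corollary \ref{Corollary unique extension} then upgrades this to uniqueness of the optimal extension. For the relabeling in (ii) I would colour each state of $f_c$ (i.e.\ each component $C$) with the colour $g_2$ assigns to every node of $C$; this is well-defined because no edge inside $C$ is a mutation edge of $g_2$, so $C$ is monochromatic under $g_2$. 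Calling the resulting two-state character $f_2$, each taxon $x$ receives the colour $g_2(x) = f_2^{*}(x)$, so $f_2 = f_2^{*}$ as characters on $X$. Consequently $l_{f_2}(T) = l_{f_2^{*}}(T)$ and $l_{f_2}(T') = l_{f_2^{*}}(T')$, and optimality of $f_2^{*}$ yields $|l_{f_2}(T) - l_{f_2}(T')| = d_{MP}^2(T,T')$. The signed form $l_{f_2}(T') - l_{f_2}(T)$ comes directly from the without-loss-of-generality assumption $l_{f_2^{*}}(T) \leq l_{f_2^{*}}(T')$ used inside the proof of Lemma \ref{Lemma 2 taxon ccc}.

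The main obstacle I anticipate is cosmetic rather than structural: the polynomial-time clause of (ii). My existence argument specifies the correct relabeling in terms of $g_2$, which an enumeration algorithm that only has access to $f_c$ does not see. I would handle this by observing that applying any single relabeling and computing $l_{f_2}(T)$ and $l_{f_2}(T')$ takes low-order polynomial time via Fitch's algorithm, so the theorem's substantive content is the existence claim; for the algorithmic use of the theorem in the next section it will suffice either to enumerate $f_c$ together with its possible $\{$red,blue$\}$ colourings of states, or to fold that colouring into the convex-character enumeration itself.
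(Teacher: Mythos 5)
Your assembly of the existence part matches the paper: Lemma \ref{Lemma 2 taxon ccc} supplies $f_2^{*}$, $T^{*}$ and $g_2$; the forest components define $f_c$; Lemma \ref{Lemma partial is covering} and Corollary \ref{Corollary unique extension} give the covering property; and your observation that colouring each component by its $g_2$-colour recovers $f_2 = f_2^{*}$ is correct. The genuine gap is in part (ii), and you have correctly identified where it is but not closed it. The theorem asserts that the red/blue relabelling can be recovered \emph{in polynomial time from $f_c$ alone}, and this is not cosmetic: the algorithm of Theorem \ref{firstAttempt} enumerates covering convex characters $f_c$ and must, for each one, produce the single candidate $f_2$ to evaluate. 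Your proposed fallback of enumerating all red/blue colourings of the states of $f_c$ is exponential in the number of states, which can be $\Theta(n)$ (each state need only contain two taxa), so it would multiply the $O^{*}(\phi^n)$ and $O^{*}(1.5895^n)$ running times by a factor of up to $2^{n/2}$ and destroy the results of Sections \ref{Sec DM2 Theory} and \ref{Sec Matching Theory}. Deferring the colouring to "fold into the enumeration" is likewise not something you can wave at; it is precisely the step that needs an argument.

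The missing idea, which is the actual content of the paper's proof of (ii), is that the colouring is \emph{forced} up to the global red/blue swap. Because $f_c$ is covering, Lemma \ref{strongUnique} gives a unique optimal extension $g_c$, which Fitch's algorithm finds in polynomial time; its mutation edges coincide with those of $g_2$. Form the graph $G$ whose vertices are the components of the mutation-cut forest and whose edges are the mutation edges of $g_c$. Then $G$ is a tree (a cycle in $G$ would yield one in $T^{*}$), hence connected and bipartite with a unique bipartition, and every mutation edge of $g_2$ must join a red component to a blue one, so the two sides of the bipartition are exactly the red and blue classes of $f_2$ up to swapping. Since $|l_{f_2}(T) - l_{f_2}(T')|$ is invariant under the swap, either choice works, and the whole recovery is polynomial. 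Adding this bipartition argument would complete your proof and bring it in line with the paper's.
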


\begin{proof}
    The earlier lemmas and corollaries prove the existence of a covering convex character $f_c$ on $T^{*} \in \{T,T'\}$. We know that (i) holds, by construction of $f_c$. Now, we know that the desired character $f_2$ exists - because we used it (together with $g_2$) to build $f_c$ in the first place - but to satisfy (ii) we have to show how to efficiently obtain $f_2$ given only $f_c$. We show (up to symmetry of red and blue) that $f_2$ is completely determined by $f_c$, and can easily be obtained from it.

    Crucially, due to being a covering convex character, there is a unique optimal extension of $f_c$ to $T^{*}$, which is the natural covering extension. Due to this uniqueness, any standard polynomial-time algorithm used for computing optimal extensions (e.g. Fitch's algorithm \cite{Fitch71}) will obtain it; let us call this $g_c$. This natural covering extension has, by construction, exactly the same mutation edges as $f_2$. Hence, $f_c$ partitions $X$ exactly the same way as the deletion of mutation edges in $g_2$ partitions $X$. We now need to decide which states of $f_c$ will become blue, and which will become red. 
 
   We define a new graph $G=(V,E)$ as follows.
    \begin{itemize}
        \item[$V$:] For every connected component in the forest obtained by cutting mutation edges of $g_c$ in $T^{*}$, we add a vertex to $V$ representing the connected component.
        \item[$E$:] For every mutation edge $\{u_1, u_2\}$ in the optimal extension $g_c$, we add the edge $\{v_1, v_2\}$ where the endpoints $v_1$ and $v_2$ are the connected components that $u_1$ and respectively $u_2$ belong to.
    \end{itemize}
    Note $G$ is still a tree; if it had cycles then so would $T^*$. Recall from
    Corollary \ref{Corollary unique extension} that $g_c$ is unique. This means that $G$ is also unique. Because $G$ is a connected tree we can see it as a bipartite graph with a unique bipartition. This bipartition of $X$ forms a two-state character. And because of the uniqueness of $G$ the mapping from $f_c$ to a two-state character is well-defined.
    

 Finally, we note that the construction of $G$ and the creation of the two-state character can be performed in polynomial time. This concludes the proof.

\end{proof}

Now that Theorem \ref{Theorem two state char is related to ccc} has been proven it is time to use it algorithmically. To compute $d_{MP}^2(T,T')$ we loop over all  covering convex characters $f_c$ of $T$, derive $f_2$ from it, compute $|l_{f_2}(T) - l_{f_2}(T')|$ and note the largest value we see. We then symmetrically repeat the procedure for $T'$, simply because we don't know whether $T^{*}$ is $T$ or $T'$. The character $f_2$ producing the largest value will be an optimal character.

All that remains is producing an algorithm that can enumerate all covering convex characters $f_c$ of a tree $T$. An elementary approach is to loop through the space of all convex characters where each state has at least two taxa, and discard those that are not covering convex characters. We can test in polynomial time whether a convex character is a covering convex character by constructing the natural partial extension and checking whether it covers all nodes of $T$. Kelk and Stamoulis \cite{KelkS17} proved that there are $\Theta(\phi^n)$ convex characters that have at least 2 taxa in each state and they can be efficiently listed. Putting all these pieces together, we end up with a running time of $O^{*}(\phi^n )$ for calculating $d_{MP}^2(T,T')$.

\begin{Theorem}
\label{firstAttempt}
Given two trees $T, T'$ on a set $X$ of $n$ taxa, $d_{MP}^2(T,T')$ can be computed in time $O^{*}(\phi^n )$.     
\end{Theorem}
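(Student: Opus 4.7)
The plan is to turn Theorem \ref{Theorem two state char is related to ccc} directly into an algorithm by enumerating candidate characters on each input tree and taking the best one found. By that theorem, there exists an optimal 2-state character $f_2$ witnessing $d_{MP}^2(T,T')$ which is obtained (up to swapping red and blue) from some covering convex character $f_c$ on at least one of $T$ or $T'$, where every state of $f_c$ contains at least 2 taxa. So it suffices to iterate over all such $f_c$ for each of the two trees in turn and keep track of the maximum value of $|l_{f_2}(T) - l_{f_2}(T')|$ observed, where $f_2$ is derived from $f_c$ as described in the proof of Theorem \ref{Theorem two state char is related to ccc}.

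Concretely, I would proceed as follows. First, invoke the enumeration procedure of Kelk and Stamoulis \cite{KelkS17} to list all convex characters on $T$ in which every state contains at least 2 taxa; there are $\Theta(\phi^n)$ such characters and they can be produced with polynomial delay per character. Second, for each enumerated character $f_c$, test whether it is a covering convex character: compute the natural partial extension by taking the minimal spanning subtree induced by each state, and check that every internal node of $T$ is covered. This test runs in polynomial time. Third, for those $f_c$ that pass the test, build the auxiliary tree $G$ from the proof of Theorem \ref{Theorem two state char is related to ccc} (contract each monochromatic component, keep mutation edges), compute its unique bipartition in linear time, and read off the induced two-state character $f_2$. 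Fourth, compute $l_{f_2}(T)$ and $l_{f_2}(T')$ using Fitch's algorithm \cite{Fitch71} in linear time, and update the running maximum. Finally, repeat the entire procedure with the roles of $T$ and $T'$ swapped.

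For correctness, Theorem \ref{Theorem two state char is related to ccc} guarantees that at least one of the two loops encounters a covering convex character from which an optimal $f_2$ is recovered, so the maximum value we record equals $d_{MP}^2(T,T')$. For the running time, each of the two loops processes $\Theta(\phi^n)$ characters, spends polynomial time on the covering test, on the construction of $G$ and the bipartition, and on evaluating the two parsimony scores. Summed over all characters this yields total time $O^*(\phi^n)$, as required.

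I do not anticipate a genuine obstacle here: all structural work has been done in the preceding lemmas, and the matching enumeration tool from \cite{KelkS17} supplies both the correct count and an efficient listing routine. The only point requiring a little care is to ensure that the recovery of $f_2$ from $f_c$ is canonical; since $f_c$ is covering, its optimal extension is unique (Corollary \ref{Corollary unique extension}), so the auxiliary tree $G$ and hence the bipartition are well-defined, and the algorithm is deterministic up to the symmetry between red and blue, which is immaterial for $|l_{f_2}(T)-l_{f_2}(T')|$.
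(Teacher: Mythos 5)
Your proposal is correct and follows essentially the same route as the paper: enumerate the $\Theta(\phi^n)$ convex characters with at least two taxa per state via the listing procedure of Kelk and Stamoulis, filter out the non-covering ones in polynomial time, recover $f_2$ from each surviving $f_c$ using the canonical construction from Theorem \ref{Theorem two state char is related to ccc}, evaluate the parsimony scores, and repeat with the roles of $T$ and $T'$ swapped. No gaps.
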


But we can do better! In the next section we will show that there are actually at most
$O(1.5895^n)$ covering convex characters with at least two taxa per state, and that these can be efficiently listed, immediately giving a faster algorithm for $d_{MP}^2$. To prove this, we will turn to the topic of matchings.



\section{Matchings on a core tree}\label{Sec Matching Theory}


We proved that $f_c$, from which we can construct an optimal two-state character for $d^2_{MP}(T,T')$, is a covering convex character on one of the input trees, and has at least two taxa in every state. The following observation shows that some characters with these properties can be safely ignored. This will ultimately allow us to improve the running time given in Theorem \ref{firstAttempt}.

Suppose, without loss of generality, that $f_c$ has a lower parsimony score on $T$ than $T'$. Suppose furthermore that $T$ contains two taxa $a$ and $b$ whereby the shortest path from $a$ to $b$ has exactly three edges, and that $f_c$ assigns $a$ and $b$ the same state - but that no other taxa are assigned this state. See Figure \ref{fig illegal island}. An optimal extension here will be the natural covering extension, so the two red edges in the figure will definitely be mutation edges in this extension. If $f_c$ is mapped back to $f_2$ in the way described in Theorem \ref{Theorem two state char is related to ccc}, $f_2$ will assign $a$ and $b$ the same colour (say, blue) and the two mutation edges will be mutations from blue to red. But suppose, similar to the arguments used in Lemma  \ref{Lemma 2 taxon ccc}, that we flip $a$ and $b$ both to red in $f_2$, obtaining a new two-state character $f'_2$. We have that $l_{f'_2}(T) \leq l_{f_2}(T)-2$, because the two mutation edges disappear, but we also have $l_{f'_2}(T') \geq l_{f_2}(T') - 2$, because changing the state of two taxa can reduce the parsimony score by at most 2. Hence $f'_2$ is also an optimal character for the $d^2_{MP}$ instance, and still obeys the additional criteria specified in Lemma \ref{Lemma 2 taxon ccc}. In the same way $f_c$ was obtained from $f_2$, we can now construct $f'_c$ from $f'_2$. An algorithm that can locate
$f'_c$ is thus also guaranteed to optimise $d^2_{MP}$. The important fact
is that in an optimal extension of $f'_c$, neither of the red
edges shown in Figure \ref{fig illegal island} will be mutation edges.


\begin{figure}[h!]
\begin{center}
\begin{tikzpicture}[scale=0.8]
    \vertex  (u) at (1,0) {};
    \vertex  (v) at (2,0) {};
	\vertex [label=below:$a$] (a) at (1,-1) {};
    \vertex [label=below:$b$] (b) at (2,-1) {};
	\draw [line width = 1pt]
	(u) edge (a)
	(v) edge (b)
	(u) edge (v)
	(u) edge [red] (0,0)
	(v) edge [red] (3,0);
	
\end{tikzpicture}
\end{center}
\caption{Red edges are mutating (i.e. bichromatic) edges. When solving $d^2_{MP}$ we can safely ignore covering convex characters whose natural covering extensions have this local `island' structure.}
\label{fig illegal island}
\end{figure}
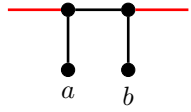

This observation shows that we need to find a way to skip over covering convex characters that have the kind of pointless `islands' described above and shown in Figure \ref{fig illegal island}. To do this we turn to \emph{matchings}. Recall that a matching of a graph is simply a subset of edges that do not share any endpoints. We will show a natural bijective correspondence between matchings (on an appropriately transformed tree) with covering convex characters that have at least two taxa in each state. 

\begin{Definition}
    The core tree $T_c$ for a phylogenetic tree $T$ on $X$ is the non-phylogenetic tree that remains when every node labelled by $X$ is removed.
\end{Definition}

\begin{Lemma}
There is a one-to-one correspondence between matchings in $T_c$ and covering convex characters of $T$ with at least two taxa per state.

\end{Lemma}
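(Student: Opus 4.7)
The plan is to exhibit explicit maps in both directions and check they are mutual inverses, passing through the intermediate object ``set of mutation edges of the natural covering extension''. A covering convex character $f$ uniquely determines its natural covering extension $g$ by Lemma \ref{strongUnique}, so the set $M_g$ of mutation edges of $g$ is a well-defined subset of $E(T)$. In the other direction, from a matching $M$ of $T_c$ I build a character whose classes are the taxa-sets of the connected components of $T$ obtained by deleting the edges in $M$.

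For the forward direction, I would show that $M_g$ is a matching of $T_c$ in two steps. First, no mutation edge of $g$ is incident to a taxon: if a leaf $x$ of $T$ has state $s$, then $x$ lies on the minimal spanning subtree $T[X_s]$, and because $x$ has degree $1$ in $T$ the only way to reach $x$ in this spanning subtree is through its unique neighbour $u$; hence $u$ also lies on $T[X_s]$ and the natural covering extension assigns state $s$ to $u$, so the edge $\{x,u\}$ is not a mutation edge. Thus $M_g \subseteq E(T_c)$. Second, to see that $M_g$ is a matching, fix any internal node $u$ assigned state $s$ by the natural covering extension; then $u \in T[X_s]$, which means $u$ lies on some path between two taxa of state $s$, so at least two of the three subtrees of $T$ obtained by removing $u$ contain a taxon of state $s$; consequently at least two neighbours of $u$ also lie on $T[X_s]$ and so are also assigned state $s$. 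Hence at most one edge incident to $u$ is a mutation edge.

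For the reverse direction, start with a matching $M$ of $T_c$ and let $f$ be the character whose classes are the taxa-sets of the components of $T\setminus M$. Convexity is immediate because the components are vertex-disjoint subtrees of $T$ that each contain the taxa of their state. Three further checks remain. (i) Each state contains at least two taxa: since $M$ is a matching of $T_c$, every internal node of $T$ has at most one incident edge in $M$ and therefore retains degree $\geq 2$ inside its component; moreover, the edge from a taxon to its neighbour is not an edge of $T_c$ and so is never cut, which rules out singleton components. The only leaves of each component are therefore taxa, and any finite tree with more than one vertex has at least two leaves. (ii) $f$ is covering: each internal node of $T$ is, by the same degree argument, an internal node of its own component $C$, and a standard fact (delete the node and pick one leaf from each resulting piece) shows it lies on a path in $C$ between two leaves of $C$, hence between two taxa of the same state, so the natural partial extension covers it. (iii) The natural covering extension assigns the component-state to every vertex, so its mutation edges are precisely the cut edges $M$.

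Finally, the two constructions are mutually inverse: given $f$, the components of $T \setminus M_g$ are exactly the state classes of the natural covering extension of $f$, so we recover $f$; given $M$, part (iii) above says the character we build has natural covering extension whose mutation edges are $M$. The main obstacle I expect is verification (ii), namely guaranteeing the ``covering'' property; the delicate point is that the matching hypothesis is exactly what prevents the dangling internal nodes that would otherwise appear if two edges incident to a common internal node were both cut, and this is what forces every internal node to be an internal node of its component and thus to lie on a taxon-to-taxon path.
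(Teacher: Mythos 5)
Your proof is correct and takes essentially the same route as the paper's: both directions pass through the set of mutation edges of the unique (natural covering) extension, with the same degree-counting observations showing that components of $T\setminus M$ have only taxa as leaves and hence contain at least two taxa and cover every internal node. The only noteworthy differences are in your favour: you verify the two maps are mutual inverses rather than separately injective, and where the paper justifies the matching property by invoking Lemma \ref{Lemma internal node neighbor color} (stated only for two-state characters), you argue directly that every internal node lies on a taxon-to-taxon path inside its state's minimal spanning subtree and so has at most one differently-coloured neighbour, which is cleaner.
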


\begin{proof}
    Let $f_c$ be a covering convex character on $T$ with at least two taxa per state. We map $f_c$ to the set of  mutation edges in the unique optimal extension of $f_c$ to $T$. Note that this mapping is well-defined, since the optimal extension (and thus the set of induced mutation edges) is unique.
    Observe that none of these mutation edges can be incident to an element of $X$, because there are at least two taxa per state in $f_c$, so all these edges survive in $T_c$. Now, suppose the mutation edges do not form a matching in $T_c$. This means that there are two mutation edges that share an endpoint in $T_c$, and thus also in $T$. This is in contradiction with Lemma \ref{Lemma internal node neighbor color}. Hence, the mutation edges must form a matching in $T_c$. To see that the mapping from $f_c$ to matchings in $T_c$ is injective, 
    suppose $f_c$ and $f'_c$ are two distinct covering convex characters on $T$ i.e. they induce different partitions of $X$. Then there exist $x \neq y \in X$ such that without loss of generality $x,y$ belong to the same state $s$ in $f_c$ but to different states in $f'_c$. In the unique optimal extension of $f_c$ to $T$, all edges on the path from $x$ to $y$ must be covered by the spanning tree for $s$. However, in the unique optimal extension of $f'_c$ to $T$, at least one edge on this path does not lie on any of the spanning trees induced by the states of $f'_c$ - otherwise $x,y$ would belong to the same state\footnote{Recall that the spanning trees will be vertex-disjoint, because the tree has maximum degree 3, so it cannot occur that the path is the disjoint union of several spanning trees from distinct states.}. Hence the subset of edges of $T$ that are covered by spanning trees in the unique optimal extension of $f_c$ to $T$ is not the same subset as those corresponding to $f'_c$. The set of mutation (and thus matching) edges is exactly those \emph{not} on spanning trees, and as noted earlier all these mutation edges survive in $T_c$, so $f_c$ and $f'_c$ induce different matchings in $T_c$. Hence, injectivity holds.
    
    In the other direction, consider a matching $M$ of $T_c$. Observe that degree-1 nodes in $T_c$ correspond to internal nodes of $T$ that have two taxa children, and degree-2 nodes in $T_c$ to internal nodes of $T$ that have one taxon child. Suppose we delete the edges in $M$ from $T_c$, obtaining the forest $T_c - M$. We map $M$ to the natural partition of $X$ induced in $T$ by the connected components of $T_c - M$, i.e. each node in $T_c - M$ contributes the taxa it is adjacent to in $T$. This mapping is clearly well-defined. If we can show that each connected component of $T_c - M$ contains at least one degree-1 node (or at least two degree-2 nodes), where here the degrees refer to $T_c$ before the deletion of $M$, it follows that the mapping produces a partition of $X$ whereby each block contains at least two taxa. Suppose a connected component consists only of a single node. Then it must be a degree-1 node, and thus correspond to 2 taxa, so we are fine. (It cannot be a degree-2 or degree-3 node, because then the component would contain at least one edge). So suppose a connected component contains at least one edge, and thus has at least two leaves. Note that leaves of a connected component of $T_c - M$ \emph{cannot} be degree-3 nodes in $T_c$, because this would mean that the connected component could actually be grown further: this is because there can be at most one matching edge incident to a degree-3 node. So each of the at least two leaves must be a degree-1 or degree-2 node. Hence, we indeed obtain a partition of $X$ with at least two taxa per block. To see that this partition is in fact a covering convex character, observe that the connected components of $T_c - M$ are (when projected onto $T$) minimal spanning trees for the blocks of $X$. They are minimal because, as noted above, the
    leaves of components in $T_c - M$ must be degree-1 or degree-2 nodes in $T_c$, and such nodes are (in $T$) incident to taxa. The minimal spanning trees cover all nodes of $T$, because the connected components of $T_c - M$ cover all nodes of $T_c$. From this we conclude that the partition of $X$ that we created is indeed a covering convex character (and that the edges in $M$ are precisely the mutation edges in the unique optimal extension for this character).
    
    Finally, to see that the mapping from matchings to characters is injective, suppose two matchings $M, M'$ of $T_c$ map to the same covering convex character $f_c$ with at least two taxa per state. As stated above, $M$ and $M'$ are mutation edges in an optimal extension of $f_c$ to $T$. By Corollary  \ref{Corollary unique extension} a covering convex character has a unique optimal extension, so $M=M'$. 

\end{proof}




\begin{Definition}
    We call a matching $M$ in $T_c$ \emph{legal} if $T_c - M$ does not contain a component containing a single edge connected to two edges in $M$. Otherwise it is an \emph{illegal} matching.
\end{Definition}

Observe that illegal matchings induce $f_c$ with the `islands' discussed at the start of the section, and shown in  Figure \ref{fig illegal island}. We know it does not compromise optimality to ignore such characters. Hence, in solving $d^2_{MP}$ we can henceforth focus on counting and listing only legal matchings.

What is an upper bound for the number of legal matchings in $T_c$, and thus an upper bound for the number of candidate $f_c$? To answer this question we establish a recursion, which we will also use to create a DP algorithm to enumerate all the legal matchings. When bounding the recursion we will require some auxiliary support from the software package Mathematica; we shall explain this further in due course.

To calculate the upper bound, we apply the method described in \cite{Rosenfeld21} (and earlier in \cite{Rote19}). To this end, we need to introduce some additional notation.

In the remainder of this section, to ensure relevance to the wider matchings literature, $T$ will refer to any non-phylogenetic tree which has maximum degree 3. (In our specific $d^2_{MP}$ applications context, $T$ will be the tree $T_c$ created
earlier in the section, but this extra information is not necessary.) Again, to keep consistency with the matchings literature, we will let $n$ henceforth refer to the total number of nodes in $T$, not just the number of leaves. Fortunately, this will not asymptotically distort any running times when we translate the results back to the phylogenetic context. That is because a phylogenetic tree with $|X|$ leaves has exactly $|X|-2$ internal nodes, so $T_c$ (where we enumerate legal matchings) has exactly $|X|-2$ nodes in total.

We consider all trees as \emph{rooted} trees of maximum degree 3, where each node has at most one left child and at most one right child. We select an arbitrary leaf of the tree as the root; this allows us to avoid certain technicalities later on.


It will be convenient to allow empty trees with no nodes in the following. Define 
 \[
e(T) = [T \text{ is empty}] = \begin{cases} 1 & T \text{ is an empty tree,} \\ 0 & \text{otherwise.} \end{cases}
\]

Moreover, we consider two special types of matchings of a rooted tree $T$: 
\begin{itemize}
\item Legal matchings where the root has only one child and only one grandchild, the root is not covered by a matching edge, but its child is.   The number of these matchings is denoted by $b_0(T)$ and shown in Figure \ref{Fig legal matching origins b_0}.

\begin{figure}[!h]
\begin{center}
\begin{tikzpicture}[scale = 0.8]	
	\vertex [label=left:$u$] (u) at (0,0) {};
	\vertex [label=left:$c$] (c) at (0,-1) {};
	\vertex [label=left:$gc$] (gc) at (0,-2) {};
	
	\draw [line width = 1pt]
	(u) edge (c)
	(c) edge [red] (gc)
	(u) edge (0,1)	
    (gc) -- (-1,-3) -- (1,-3) -- (gc);
	
\end{tikzpicture}
\end{center}
\caption{Legal matchings stored in $b_0$. A red edge represents an edge in the matching.
As in the other figures in this section the edge entering from above represents the edge that enters the subtree in the original tree.}
\label{Fig legal matching origins b_0}
\end{figure}
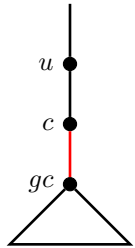

\item Legal matchings where the root has one child only and is covered by a matching edge. Their number is denoted by $b_1(T)$ and shown in Figure \ref{Fig legal matching origins b_1}.

\begin{figure}[!h]
\begin{center}
\begin{tikzpicture}[scale = 0.8]	
	\vertex [label=left:$u$] (u) at (0,0) {};
	\vertex [label=left:$c$] (c) at (0,-1) {};
	
	\draw [line width = 1pt]
	(u) edge [red] (c)
	(u) edge (0,1)
    (c) -- (-1,-2) -- (1,-2) -- (c);
	
\end{tikzpicture}
\end{center}
\caption{Legal matchings stored in $b_1$. A red edge represents an edge in the matching.}
\label{Fig legal matching origins b_1}
\end{figure}
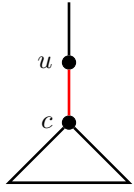

\end{itemize}

We denote the number of remaining legal matchings not covering the root by $a_0(T)$, shown in Figure \ref{Fig legal matching origins a_0}. \begin{figure}[!h]
\begin{center}
\begin{tikzpicture}[scale = 0.8]
	\vertex [label=left:$u$] (u) at (0,0) {};
	\vertex [label=left:$l$] (l) at (-1.5,-1) {};
    \vertex [label=right:$r$] (r) at (1.5,-1) {};
	
	\draw [line width = 1pt]
	(u) edge (l)
	(u) edge (r)
    (u) edge (0,1)
    (l) -- (-2.5,-2) -- (-0.5,-2) -- (l)
    (r) -- (0.5,-2) -- (2.5,-2) -- (r);
	
	\vertex [label=left:$u$] (u) at (4.5,0) {};
	\vertex [label=left:$l$] (c) at (4.5,-1) {};
	
	\draw [line width = 1pt]
	(u) edge (c)
	(u) edge (4.5,1)
    (c) -- (3.5,-2) -- (5.5,-2) -- (c);
	
\end{tikzpicture}
\end{center}
\caption{Legal matchings stored in $a_0$.}
\label{Fig legal matching origins a_0}
\end{figure}
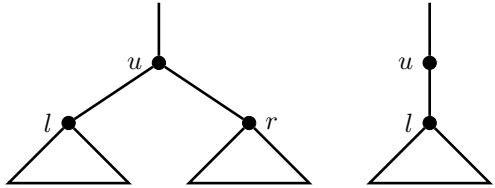

Finally, we let $a_1(T)$ be the number of remaining legal matchings covering the root, shown in Figure \ref{Fig legal matching origins a_1}.

\begin{figure}[!h]
\begin{center}
\begin{tikzpicture}[scale = 0.8]
	\vertex [label=left:$u$] (u) at (0,0) {};
	\vertex [label=left:$l$] (l) at (-1.5,-1) {};
    \vertex [label=right:$r$] (r) at (1.5,-1) {};
	
	\draw [line width = 1pt]
	(u) edge [red] (l)
	(u) edge (r)
    (u) edge (0,1)
    (l) -- (-2.5,-2) -- (-0.5,-2) -- (l)
    (r) -- (0.5,-2) -- (2.5,-2) -- (r);
    
    \vertex [label=left:$u$] (u) at (6,0) {};
	\vertex [label=left:$l$] (l) at (4.5,-1) {};
    \vertex [label=right:$r$] (r) at (7.5,-1) {};
	
	\draw [line width = 1pt]
	(u) edge (l)
	(u) edge [red] (r)
    (u) edge (6,1)
    (l) -- (3.5,-2) -- (5.5,-2) -- (l)
    (r) -- (6.5,-2) -- (8.5,-2) -- (r);
	
\end{tikzpicture}
\end{center}
\caption{Legal matchings stored in $a_1$. A red edge represents an edge in the matching.}
\label{Fig legal matching origins a_1}
\end{figure}
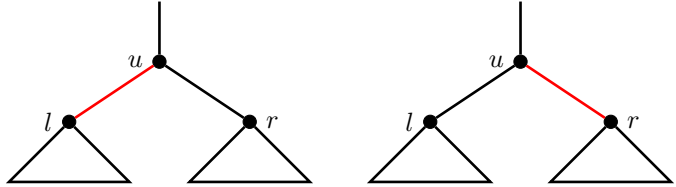

If $T$ is empty, we set $a_0(T) = a_1(T) = b_0(T) = b_1(T) = 0$. Let $T$ be a rooted tree whose root branches are $T_l$ and $T_r$ (possibly empty). More formally, $T_l$ (respectively, $T_r$) is the subtree of $T$ rooted at the left (respectively, right) child of $T$. In the following recursions, whereby $T$ can thus represent a subtree of $T$ - as opposed to the original tree itself - we emphasise that $a_0, a_1, b_0, b_1$ count matchings that are legal on the \emph{original} tree, not just the subtree. This is important because although subtrees $T_l, T_r$ encountered in the recursion can potentially have a root of degree 1 or 2 (or even degree 0), the subtree will have an incoming edge when viewed in the context of the original tree. This ensures that some matchings which would be illegal if the subtree was considered in isolation, but which are legal when viewed in the context of the original tree, are correctly counted. 
\begin{align*}
a_0(T) &= \big(a_0(T_l) + a_1(T_l) + b_0(T_l) + b_1(T_l) + e(T_l)\big)\big(a_0(T_r) + a_1(T_r) + b_0(T_r) + b_1(T_r) + e(T_r)\big) \\
&\qquad - b_1(T_l)e(T_r) - e(T_l)b_1(T_r), \\
a_1(T) &= a_0(T_l)\big(a_0(T_r) + a_1(T_r) + b_0(T_r) + b_1(T_r)\big) + \big(a_0(T_l) + a_1(T_l) + b_0(T_l) + b_1(T_l) \big)a_0(T_r), \\
b_0(T) &= b_1(T_l)e(T_r) + e(T_l)b_1(T_r), \\
b_1(T) &= a_0(T_l)e(T_r) + e(T_l)a_0(T_r).
\end{align*}
Note that, if $T$ is a single node, $a_0(T)$ evaluates to 1 (corresponding to the empty matching), but $a_1(T)$, $b_0(T)$ and $b_1(T)$ all
evaluate to 0.
The reasoning behind the recursion is as follows. We obtain a matching not covering the root in $T$ by combining an arbitrary matching in $T_l$ (possibly the empty matching if $T_l$ is empty) and an arbitrary matching in $T_r$ (also possibly empty). If both components are legal, then so is the new matching. It is a special matching counted by $b_0$ if one of the two branches is empty and the matching chosen in the other branch is a special matching counted by $b_1$.

Likewise, we obtain a matching covering the root by first choosing one of the root edges, e.g.~the edge between the root of $T$ and the root of $T_l$. Then we combine it with an arbitrary matching in $T_r$ and a matching not covering the root of $T_l$ (or vice versa if the other root edge was chosen). For the matching to remain legal, the matching in $T_l$ may not be of special type $b_0$. A special matching $b_1$ is obtained in this way if one branch is empty. 


It is useful to write this recursion in matrix form: associating a vector
$$\mathbf{v}(T) = \big[a_0(T),a_1(T),b_0(T),b_1(T),e(T)\big]^T$$ 
to a tree $T$, we have $\mathbf{v}(T)  = B\big(\mathbf{v}(T_l),\mathbf{v}(T_r)\big)$, where the map $B: \R^5 \times \R^5 \to \R^5$ is defined by
\[
B\big(
\begin{bmatrix} 
    v_1 \\ 
    w_1 \\ 
    x_1 \\ 
    y_1 \\ 
    z_1 
\end{bmatrix},
\begin{bmatrix} 
    v_2 \\
    w_2 \\
    x_2 \\
    y_2 \\
    z_2
\end{bmatrix}
\big)
= 
\begin{bmatrix} 
(v_1+w_1+x_1+y_1+z_1)(v_2+w_2+x_2+y_2+z_2)-y_1z_2-z_1y_2 \\
(v_1+w_1+x_1+y_1)v_2+v_1(v_2+w_2+x_2+y_2) \\
y_1z_2+z_1y_2 \\
v_1z_2+z_1v_2 \\
0
\end{bmatrix}.
\]
Note that $B$ is a bilinear map: we have 
\[
B(\mathbf{v}_1 + \mathbf{w}_1, \mathbf{v}_2 + \mathbf{w}_2) = B(\mathbf{v}_1, \mathbf{v}_2) + B(\mathbf{w}_1, \mathbf{v}_2) + B(\mathbf{v}_1,\mathbf{w}_2) +  B(\mathbf{w}_1,\mathbf{w}_2)
\]
and
\[
B(c_1 \mathbf{v}_1, c_2 \mathbf{v}_2) = c_1c_2 B(\mathbf{v}_1, \mathbf{v}_2).
\]
Furthermore the vector associated with the empty tree is $[0,0,0,0,1]^T$.

\begin{Theorem}\label{thm:upperbound}
The maximum number $M_n$ of legal matchings in a tree with $n$ nodes is $O(\alpha^n)$ with $\alpha = (13384+8\sqrt{2793745})^{1/22} \approx 1.5895$. 
\end{Theorem}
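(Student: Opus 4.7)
The plan is to apply the monotone bootstrap technique of Rote and Rosenfeld directly to the bilinear recursion $\mathbf{v}(T)=B(\mathbf{v}(T_l),\mathbf{v}(T_r))$. A preliminary observation that underpins the whole approach is that every component $B_i$ of $B$, when fully expanded in its ten scalar arguments, is a polynomial with \emph{non-negative} coefficients. This is immediate for $B_2$, $B_3$, $B_4$, $B_5$; for $B_1$ the two subtracted monomials $y_1z_2$ and $z_1y_2$ cancel precisely the corresponding terms produced by expanding $(v_1+w_1+x_1+y_1+z_1)(v_2+w_2+x_2+y_2+z_2)$, leaving $23$ monomials each with coefficient $1$. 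Consequently $B$ is coordinate-wise monotone: componentwise inequalities in both arguments are preserved in the output.

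Given this, the next step is to fix a positive vector $\mathbf{u}=(u_1,u_2,u_3,u_4,1)^{T}$ and a constant $\alpha>0$ satisfying $B_i(\mathbf{u},\mathbf{u})\le\alpha u_i$ for $i=1,2,3,4$ (the $i=5$ inequality being automatic since $B_5\equiv 0$), and to prove by structural induction on $T$ that $\mathbf{v}(T)\le\alpha^{|V(T)|}\mathbf{u}$ componentwise. The base case is the empty tree, for which $\mathbf{v}(\emptyset)=(0,0,0,0,1)^{T}\le\mathbf{u}$ thanks to the normalisation $u_5=1$. For the inductive step, bilinearity together with monotonicity yields
\[
\mathbf{v}(T)=B(\mathbf{v}(T_l),\mathbf{v}(T_r))\le B(\alpha^{n_l}\mathbf{u},\alpha^{n_r}\mathbf{u})=\alpha^{n_l+n_r}B(\mathbf{u},\mathbf{u})\le \alpha^{n_l+n_r+1}\mathbf{u}=\alpha^{|V(T)|}\mathbf{u}.
\]
Summing the first four coordinates then bounds the total number of legal matchings in a tree on $n$ nodes by $\alpha^{n}(u_1+u_2+u_3+u_4)=O(\alpha^{n})$.

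The remaining, and essentially the only nontrivial, step is to pick the smallest $\alpha$ for which such a positive $\mathbf{u}$ exists. By a standard rebalancing argument one expects all four inequalities to be tight at the optimum, giving the system
\begin{align*}
(u_1+u_2+u_3+u_4+1)^{2}-2u_4&=\alpha u_1,\\
2u_1(u_1+u_2+u_3+u_4)&=\alpha u_2,\\
2u_4&=\alpha u_3,\\
2u_1&=\alpha u_4.
\end{align*}
The last two equations immediately give $u_4=2u_1/\alpha$ and $u_3=4u_1/\alpha^{2}$, and substituting into the first two reduces matters to two polynomial equations in $u_1,u_2,\alpha$, from which $u_1$ and $u_2$ can be eliminated to produce a univariate polynomial equation in $\alpha$. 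This elimination is the step where Mathematica is required: the relevant real root admits the closed radical form $\alpha^{22}=13384+8\sqrt{2793745}$, with numerical value $\alpha\approx 1.5895$. The main obstacle is precisely this algebraic elimination and the verification that the resulting $u_1,\ldots,u_4$ are positive and do yield the minimum; the monotone induction around it is otherwise routine.
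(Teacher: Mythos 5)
Your setup---expanding $B$, noting that the two subtracted monomials cancel so that every component has nonnegative coefficients, hence $B$ is coordinatewise monotone, and then running a structural induction with a normalising weight---is exactly the skeleton of the paper's argument (which follows Rosenfeld and Rote). The fatal gap is the reduction to a \emph{single} test vector $\mathbf{u}$ with $B(\mathbf{u},\mathbf{u})\le\alpha\mathbf{u}$: this system is not feasible anywhere near $\alpha\approx 1.5895$, so the claimed elimination cannot produce $\alpha^{22}=13384+8\sqrt{2793745}$. Concretely, the base case forces $u_5\ge 1$; the fourth component forces $u_4\ge 2u_1/\alpha$, then $u_3\ge 4u_1/\alpha^2$ and $u_2\ge 2u_1S/\alpha$ with $S=u_1+u_2+u_3+u_4$; and the first component forces $\alpha u_1\ge (S+1)^2-2u_4$. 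Since enlarging $u_2,u_3,u_4$ beyond these minima only increases $S$ and makes the first constraint harder, one may take equalities, and the resulting scalar equation has no positive root for $\alpha$ below roughly $6$. For instance, at $\alpha=2$ the constraints force simultaneously $S=3u_1/(1-u_1)$ and $S\le 2\sqrt{u_1}-1$, which is impossible for any $u_1\in(0,1)$. So your scheme would at best prove a bound far worse than the trivial $O(\phi^n)$ bound on all matchings.

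The reason a single vector cannot work---and why the paper instead constructs a set $\mathcal{S}$ of $62$ vectors that is closed under $B$ up to domination by convex combinations---is twofold. First, the empty tree contributes the vector $[0,0,0,0,1/\alpha]^T$, whose support is disjoint from that of every other normalised subtree vector; in the paper's $\mathcal{S}$ it is the unique element with nonzero fifth coordinate. A single $\mathbf{u}$ must dominate it and all non-empty subtree vectors at once, so all five coordinates of $\mathbf{u}$ are positive, and then the quadratic cross terms of $B$ involving $z$ (such as $z_1z_2$, $v_1z_2$, $y_1z_2$), which in reality fire only when a child is actually empty, get charged at every internal node; this is what blows the constant up. Second, even setting the empty-tree issue aside, the extremal trees are periodic with period $22$ (this is where the exponent $1/22$ comes from, via the transfer matrix in the lower-bound construction), so the normalised vectors $\alpha^{-n-1}\mathbf{v}(T)$ along the extremal sequence do not converge to a single fixed point but to a whole limit set; since $B$ is quadratic, evaluating it at the componentwise supremum of that set strictly overshoots, so no one-vector invariant can be tight. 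Your induction and monotonicity argument are fine as far as they go; what is missing is the construction and verification of a suitable finite set of vectors with the convex-domination closure property, which is precisely the nontrivial, computer-assisted content of the paper's proof.
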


\begin{proof}
There exists a set $\mathcal{S}$ of $62$ $5$-dimensional vectors with nonnegative entries that have the following property:
\begin{itemize}
\item[(1)] It contains the vector $[0,0,0,0,1/\alpha]^T$,
\item[(2)] For any pair of two vectors $\mathbf{v}_1,\mathbf{v}_2 \in \mathcal{S}$, the vector $B(\mathbf{v}_1,\mathbf{v}_2)$ lies in the set
$$conv_{\leq}(\mathcal{S}) = \big\{ \mathbf{w} \in \R^5\,:\, \mathbf{w} \geq \mathbf{0}, \mathbf{w} \leq \sum_{\mathbf{v} \in \mathcal{S}} c_{\mathbf{v}} \mathbf{v} \text{ for some constants } c_{\mathbf{v}} \geq 0, \sum_{\mathbf{v} \in \mathcal{S}} c_{\mathbf{v}} = 1 \big\}.$$
\end{itemize}
Here, the inequalities hold componentwise. Note that $conv_{\leq}(\mathcal{S})$ is a bounded and convex set by construction. The set $\mathcal{S}$ is listed in Appendix B.  It can be verified directly (with the help of a computer) that the second condition is indeed satisfied. For this purpose,
we have provided a comprehensively annotated Mathematica file in the supplementary material.

\medskip

Given this property of $\mathcal{S}$, we can now prove the following by induction on $n$: for every rooted binary tree $T$ with $n$ nodes, the vector $\alpha^{-n-1} \mathbf{v}(T)$ lies in $conv_{\leq}(\mathcal{S})$. This is trivial for $n=0$, since we get the vector $[0,0,0,0,1/\alpha]^T$ for the empty tree, which lies in $\mathcal{S}$ by property (1) and thus in turn in $conv_{\leq}(\mathcal{S})$. For the induction step, we can apply property (2) of $\mathcal{S}$. Assume that the two branches $T_1$ and $T_2$ (possibly empty) of $T$ satisfy the statement, and let them have $k$ and $n-k-1$ nodes respectively. We have
\[
\alpha^{-n-1} \mathbf{v}(T) = \alpha^{-n-1} B(\mathbf{v}_1,\mathbf{v}_2) = B (\alpha^{-k-1}\mathbf{v}_1,\alpha^{-n+k}\mathbf{v}_2).
\]
By the induction hypothesis, both $\alpha^{-k-1}\mathbf{v}_1$ and $\alpha^{-n+k}\mathbf{v}_2$ lie in $conv_{\leq}(\mathcal{S})$, so there exist linear combinations of the elements of $\mathcal{S}$ with nonnegative coefficients such that
\[
\alpha^{-k-1}\mathbf{v}_1 \leq \sum_{\mathbf{v} \in \mathcal{S}} c^{(1)}_{\mathbf{v}}\mathbf{v}
\]
and
\[
\alpha^{-n+k}\mathbf{v}_2 \leq \sum_{\mathbf{v} \in \mathcal{S}} c^{(2)}_{\mathbf{v}}\mathbf{v},
\]
thus by (bi-)linearity of $B$
\begin{align*}
\alpha^{-n-1} \mathbf{v}(T) &= B (\alpha^{-k-1}\mathbf{v}_1,\alpha^{-n+k}\mathbf{v}_2) \\
&\leq \sum_{\mathbf{v} \in \mathcal{S}}  \sum_{\mathbf{w} \in \mathcal{S}} c^{(1)}_{\mathbf{v}}
c^{(2)}_{\mathbf{w}} B(\mathbf{v},\mathbf{w}).
\end{align*}
Since $B(\mathbf{v},\mathbf{w}) \in conv_{\leq}(\mathcal{S})$ for all $\mathbf{v}$ and $\mathbf{w}$ and $conv_{\leq}(\mathcal{S})$ is convex, it follows that $\alpha^{-n-1} \mathbf{v}(T) \in conv_{\leq}(\mathcal{S})$, completing the induction.

In particular, we have shown that the entries of the vector $\alpha^{-n-1} \mathbf{v}(T)$ are bounded. The total number of legal matchings of $T$ is the sum of the entries of $\mathbf{v}(T)$, so it follows that this number is $O(\alpha^n)$.
\end{proof}

\noindent
\emph{Remark. }The set $\mathcal{S}$ in our proof was constructed by iteration as described in \cite{Rosenfeld21}, starting with the two vectors $[0,0,0,0,1/\alpha]^T$ and $[\alpha^{-7}(4 + 6616/\sqrt{2793745}),0,0,\alpha^{-7}(2 + 3446/\sqrt{2793745}),0]$. The choice of the two vectors is justified as follows: the first vector corresponds to the empty tree; the second stems (as a limit point) from the sequence of trees that yields the lower bound presented in the appendix.\\


Now that the upper bound on the number of legal matchings has been proved, all we need is a dynamic programming algorithm that allows us to enumerate every legal matching. As noted earlier, we take $T_c$ as the tree $T$. We have a recursion that allowed us to count all the legal matchings in $T$. Thus, just like the algorithm in section \ref{Sec uMAF}, we use this to calculate the exact number of legal matchings in $T_c$ and use the recursion to backtrack in $T_c$ to enumerate each legal matching. Specifically, observe that the terms $a_0, a_1, b_0, b_1$ together sum to the number of legal matchings, and that each of these terms is itself a summation of terms; note here that the negative terms in $a_0$ actually cancel when the product is expanded. Hence, it is straightforward to canonically index the legal matchings and to use this to steer the backtracking.

After that we can transform the matchings back to a corresponding covering convex character $f_c$ in $T$. The states can then be coloured red-blue in the fashion described earlier, to obtain all possible $f_2$, and we are done. This brings us to our main theorem:

\begin{Theorem}
    For a pair of binary trees $T$ and $T'$ on $|X|$ taxa we can compute
    $d_{MP}^2$ in $O(\alpha^{|X|} \cdot \text{poly}(|X|))$ time with $\alpha = (13384+8\sqrt{2793745})^{1/22} \approx 1.5895$. 
\end{Theorem}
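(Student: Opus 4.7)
The plan is to assemble three ingredients already established earlier in the paper. By Theorem \ref{Theorem two state char is related to ccc}, computing $d_{MP}^2(T,T')$ reduces to searching over covering convex characters $f_c$ with at least two taxa per state on one of $T$ or $T'$; by the bijection of the preceding section these are in one-to-one correspondence with legal matchings on the core tree $T_c$ (respectively $T'_c$); and by Theorem \ref{thm:upperbound} there are at most $O(\alpha^{|X|})$ such legal matchings, since $T_c$ has exactly $|X|-2$ nodes. The algorithm I would run is therefore: enumerate every legal matching of $T_c$, and for each one reconstruct the associated $f_c$, then the associated two-state character $f_2$, and evaluate $|l_{f_2}(T)-l_{f_2}(T')|$; repeat symmetrically with the roles of $T$ and $T'$ swapped, and return the maximum value seen.

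For the enumeration itself I would reuse the recursion that underpins Theorem \ref{thm:upperbound}. Root $T_c$ at an arbitrary leaf and compute the vector $\bigl(a_0(T_u), a_1(T_u), b_0(T_u), b_1(T_u), e(T_u)\bigr)$ bottom-up at each node $u$ in polynomial time; the total count of legal matchings is then $a_0(T_c)+a_1(T_c)+b_0(T_c)+b_1(T_c)$, and each of the four components is expressible as a sum of nonnegative contributions coming from pairings of the child-subtree quantities (in the case of $a_0$, one has to first expand the product so that the subtracted terms cancel against matching positive terms). This decomposition supplies a canonical indexing of the legal matchings; standard backtracking along this DP table produces each matching in polynomial time, analogously to the enumeration used in Section \ref{Sec uMAF}.

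Given a legal matching $M$ of $T_c$, reconstructing the covering convex character $f_c$ and then the two-state character $f_2$ is straightforward: the connected components of $T_c-M$, extended with the taxa adjacent to them in $T$, form the partition $f_c$ in linear time, and the bipartite two-colouring of the auxiliary tree $G$ described in the proof of Theorem \ref{Theorem two state char is related to ccc} then produces $f_2$, also in linear time. Computing $l_{f_2}(T)$ and $l_{f_2}(T')$ is a linear-time application of Fitch's algorithm \cite{Fitch71}. Multiplying the polynomial work per matching by the $O(\alpha^{|X|})$ bound on the number of matchings gives the claimed running time.

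The step I would want to be most careful about is the justification for restricting the search to \emph{legal} matchings only: everything else is either DP bookkeeping or a direct appeal to results already proved. Legality is the conceptual ingredient unique to this theorem and is handled by the island-flipping argument at the start of Section \ref{Sec Matching Theory}: any optimal $f_c$ whose natural covering extension contains a length-three bichromatic island can be transformed into another optimal character without that island, so skipping illegal matchings loses no optimality. Once that is granted, correctness of the algorithm follows from Theorem \ref{Theorem two state char is related to ccc} and the bijection, while the running time follows from Theorem \ref{thm:upperbound} combined with the per-candidate polynomial overhead.
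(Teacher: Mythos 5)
Your proposal is correct and follows essentially the same route as the paper's own proof: reduce via Theorem \ref{Theorem two state char is related to ccc} and the matching bijection to enumerating legal matchings of the core trees, bound their number by Theorem \ref{thm:upperbound} using $n=|X|-2$, enumerate them by backtracking through the $a_0,a_1,b_0,b_1$ recursion, and run both directions because of the asymmetry of $d_{MP}^2$. All the polynomial-time conversion steps you list match those in the paper.
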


\begin{proof}
    Theorem \ref{Theorem two state char is related to ccc} proves that the process of enumerating all covering characters $f_c$ on $T$ with at least two taxa per state will give us a $f_2$ such that $d_{MP}^2(T,T') = |l_{f_2}(T) - l_{f_2}(T')| = l_{f_2}(T') - l_{f_2}(T)$, when $T$ has the lower parsimony score at optimality. Kelk and Fischer \cite{KelkF17} proved that $d_{MP}^2(T,T')$ is asymmetrical, in the sense that, if we range over all two-state characters $f$, the maximum value of $l_f(T')-l_f(T)$ might differ from
    the maximum of $l_f(T)-l_f(T')$.
    This means we don't know \emph{a priori} which of $T$ and $T'$ has the lowest parsimony score at optimality. Therefore we simply enumerate the $f_c$ of both trees. This has no impact on the asymptotic runtime.
    
    The process of creating $T_{c}$ (and in a second iteration, $T'_{c})$ can be achieved in polynomial time. The combination step in the dynamic programming algorithm - where we use the values of two child nodes to derive the values of the parent node - can be done in constant time and thus the whole dynamic programming algorithm is clearly polynomial. A legal matching can be translated to a covering convex character $f_c$ in polynomial time. The steps taken for each $f_c$ to create $f_2$ can also be done in polynomial time.
    Optimal extensions, and thus the parsimony score, can be computed in polynomial time so each $|l_{f_2}(T) - l_{f_2}(T')|$ is also polynomial-time computable.
    Theorem \ref{thm:upperbound} proves that there are at most $O(\alpha^n)$ legal matchings in $T_c$, where in that theorem $n$ refers to the \emph{total} number of nodes in $T_c$. Here $|X|$ refers to the number of leaves/taxa in $T$. However, as observed earlier, $n = |X|-2$, so the asymptotics do not change in the translation.
    
    Hence, the dynamic programming algorithm will enumerate at most $O(\alpha^{|X|})$ natural covering extensions.
    
    Combining everything gives the final runtime $O(\alpha^{|X|} \cdot \text{poly}(|X|))$.
\end{proof}

\section{Discussion: lower bounds, fully legal matchings, and a new upper bound for matchings on restricted trees}
\label{sec:discussion}

In this section we prove a number of extensions of the upper bound result from the previous section. 

\subsection{A matching lower bound for legal matchings}

First, we show in Lemma \ref{lem:lb} in Appendix C that there is a lower bound matching the upper bound in Theorem \ref{thm:upperbound}. From this we
conclude that, ranging over trees with $n$ nodes and at most degree 3, the maximum number of legal matchings is not just $O(\alpha^n)$ but in fact $\Theta(\alpha^n)$,
where $\alpha = (13384+8\sqrt{2793745})^{1/22} \approx 1.5895$. 

\subsection{A new upper bound on the number of matchings in a restricted subfamily of trees}

Next, we translate the upper bound on the number of legal matchings in trees of maximum degree at most 3 to an upper bound on the number of matchings in \emph{good} trees, henceforth defined as trees of degree at most 3, where no two degree 2 nodes are adjacent. To prove this we first need the following lemma.

\begin{Lemma}\label{lem:notwoadj}
For every tree $T$ with maximum degree at most $3$, there is a tree $T^*$ with maximum degree at most $3$ and no two adjacent nodes of degree $2$ (i.e. a good tree) that has more or equally many legal matchings.
\end{Lemma}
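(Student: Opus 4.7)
The plan is to proceed by induction on the number of pairs of adjacent degree-2 nodes in $T$. In the base case this count is zero, so $T$ is already a good tree and we can take $T^* = T$. For the inductive step, I would exhibit a local surgery on $T$ that strictly decreases this count without reducing the number of legal matchings, and then invoke the induction hypothesis on the result.

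For the surgery, pick an adjacent pair of degree-2 nodes $u, v$ in $T$. Let $w \neq v$ be the other neighbor of $u$, and let $x \neq u$ be the other neighbor of $v$, so that locally we see the path $w - u - v - x$. Define $T'$ by deleting the edge $\{v, x\}$ and inserting the edge $\{u, x\}$. Since $u$ and $x$ lie in different components of $T$ after the deletion, $T'$ remains connected, hence a tree. The degree of $u$ goes from $2$ to $3$, the degree of $v$ from $2$ to $1$, and every other node keeps its degree, so $T'$ still has maximum degree at most $3$, no new degree-2 node has been created, and the pair $(u, v)$ has been eliminated; thus the adjacent degree-2 pair count strictly decreases.

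The central step is to construct an injection $\Phi$ from the legal matchings of $T$ to the legal matchings of $T'$. For a legal matching $M$ of $T$, I would define
\[
\Phi(M) = \begin{cases} M & \text{if } \{v, x\} \notin M, \\ (M \setminus \{\{v, x\}\}) \cup \{\{u, x\}\} & \text{if } \{v, x\} \in M. \end{cases}
\]
In the second case, $u$ must be unmatched in $M$: if $\{w, u\}$ and $\{v, x\}$ were both in $M$, then the edge $\{u, v\}$ would form a single-edge component of $T - M$ with both endpoints covered by $M$, contradicting the legality of $M$. Hence $\Phi(M)$ is a valid matching in $T'$, and one recovers $M$ from $\Phi(M)$ by swapping $\{u, x\}$ back to $\{v, x\}$ whenever it is present, giving injectivity.

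The main obstacle is verifying that the image $\Phi(M)$ is itself legal in $T'$. When $\{v, x\} \in M$, the forests $T - M$ and $T' - \Phi(M)$ have identical connected components (both sever the $v$-$x$ link while leaving $u$'s connections to $v$ and $w$ intact), so legality transfers for free. When $\{v, x\} \notin M$, the local picture near $u$ in $T' - \Phi(M)$ becomes a claw centered at $u$ instead of the original path $w - u - v - x$; a short case analysis on whether $\{w, u\}$ or $\{u, v\}$ lies in $M$ shows that, because $u$ now has degree $3$ and $v$ degree $1$ in $T'$, no new single-edge component with both endpoints matched can appear locally. Any hypothetical new illegal single-edge component farther from the surgery site would have to be created by the rerouting through $x$, and one can trace such a component back via $\Phi^{-1}$ to an illegal single-edge component of $T - M$, contradicting the legality of $M$. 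This final pullback verification, which requires carefully tracking how components of $T - M$ split under the removal of $\{v, x\}$ and re-merge via the insertion of $\{u, x\}$, is the bookkeeping-heavy but conceptually routine crux of the argument.
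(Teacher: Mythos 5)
Your proposal is correct and is essentially the paper's own proof: up to relabelling (your surgery detaches $x$'s side from $v$ and reattaches it to $u$, which is the mirror image of the paper's move), you use the same local rewiring, the same edge-replacement injection $\Phi$, and the same key observation that legality of $M$ forbids both flanking edges of the adjacent degree-2 pair from lying in $M$. The only remark worth adding is that your final ``bookkeeping-heavy'' verification is lighter than you fear: an illegal single-edge component forces both of its endpoints to have degree $2$ with their other incident edges in the matching, so illegality is a purely local configuration and any new instance in $T'$ would have to involve $u$, $v$ or $x$, each of which is excluded directly (in particular $u$ now has degree $3$ and the edge $\{u,x\}$ cannot belong to $M$).
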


\begin{proof}
Suppose that $v_1$ and $v_2$ are two adjacent nodes of degree $2$ in $T$, and let their other neighbours be $v_0$ and $v_3$ respectively. Let the edges $v_0v_1$, $v_1v_2$ and $v_2v_3$ be denoted $e_0, e_1, e_2$ respectively. We remove the edge $e_0$ and replace it by $e_0' = v_0v_2$ to obtain a new tree $T'$. Now let $\Phi$ map $e_0$ to $e_0'$ and all other edges to themselves. We show that $\Phi(M)$ is a legal matching of $T'$ whenever $M$ is a legal matching of $T$. Note first that the only edges that could be adjacent in $\Phi(M)$ (given that there are no adjacent edges in $M$) are $e_0'$ and $e_2$. However, if both $e_0$ and $e_2$ are in $M$, then this would contradict the assumed legality of $M$. So $\Phi(M)$ is indeed a matching. Next, observe that the legality of $M$ implies that at most one of $e_0, e_1, e_2$ are in $M$, and that in $T'$,
$v_1$ and $v_2$ no longer have degree 2. Hence, if $\Phi(M)$ was not a legal matching, it could only be due to $\Phi(M)$ containing a matching edge incident to $v_0$ or $v_3$. But then $M$ was itself not legal. Hence, $\Phi(M)$ is indeed legal. As $\Phi$ is injective, $T'$ has at least as many legal matchings as $T$. Iterating the construction, we end up with a tree that has no adjacent nodes of degree $2$.
\end{proof}



Lemma \ref{lem:notwoadj} shows that, among trees with $n$ nodes and of maximum degree 3,
the number of legal matchings is maximised by some good tree. Moreover, by construction
it follows that \emph{every} matching in a good tree is a legal matching. The following
result then follows immediately from Theorem \ref{thm:upperbound}.

\begin{Theorem}
\label{thm:newbound}
A tree with $n$ nodes and maximum degree 3, where no two degree 2 nodes are adjacent (i.e. a good tree), has $O(\alpha^n)$ matchings, where $\alpha = (13384+8\sqrt{2793745})^{1/22} \approx 1.5895$.
\end{Theorem}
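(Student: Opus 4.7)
The plan is to reduce the bound on all matchings in a good tree directly to the bound on legal matchings already established in Theorem \ref{thm:upperbound}. The key observation is that in a good tree, every matching is automatically legal, so the set of matchings coincides with the set of legal matchings, and the existing upper bound transfers without loss.

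To make this precise, I would revisit the definition of an illegal matching. A matching $M$ in a tree $T$ of maximum degree at most $3$ is illegal only if the forest $T - M$ contains a connected component consisting of a single edge $e = \{u,v\}$ whose endpoints are both incident to edges in $M$. I claim that in this situation $u$ and $v$ must both have degree exactly $2$ in $T$. Indeed, $u$ is incident to $e$ and to its unique matching edge $e_u$ (unique because $M$ is a matching). A hypothetical third edge at $u$ in $T$ cannot remain in the component containing $e$, since that component is by assumption the single edge $e$; and it cannot belong to $M$, since $u$ is already saturated by $e_u$. The symmetric argument gives the same conclusion for $v$. Hence $u$ and $v$ are adjacent via $e$ and both have degree $2$, contradicting the defining property of a good tree. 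Thus no matching in a good tree is illegal.

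Applying Theorem \ref{thm:upperbound}, which bounds the number of legal matchings in any tree of maximum degree at most $3$ by $O(\alpha^n)$ with $\alpha = (13384+8\sqrt{2793745})^{1/22}$, yields the claimed bound on the total number of matchings, with the same constant. I do not foresee any substantial obstacle: the argument is entirely structural and requires no further computation beyond invoking the previously established upper bound. Note also that Lemma \ref{lem:notwoadj} is not strictly needed for this direction of the result, though it motivates why the class of good trees is the natural setting in which the legal/arbitrary distinction collapses.
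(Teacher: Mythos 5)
Your proposal is correct and takes essentially the same route as the paper: both reduce the claim to Theorem \ref{thm:upperbound} by observing that in a good tree every matching is automatically legal. Your explicit degree argument—showing that an illegal single-edge component forces its two endpoints to be adjacent degree-2 vertices—is a careful spelling-out of a step the paper merely asserts ``by construction.''
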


Theorem \ref{thm:newbound} shows that the absence of adjacent degree 2 nodes is sufficient to slightly improve the well-known upper bound of $O(\phi^n)$ matchings in trees. The lower bound construction given in Lemma \ref{lem:lb} in the appendix constructs good trees that asymptotically attain the bound of Theorem~\ref{thm:newbound}, so $\alpha$ cannot be lowered any further for good trees. We note that for trees where every internal node has degree 3, there are $O(1.5538^n)$ matchings
(see \cite[Theorem 1 and Remark 5]{andriantiana2011number}; the precise constant is $\sqrt{1+\sqrt{2}}$)). 




\subsection{Strengthening legality: full legality}

When computing $d^2_{MP}$ we could restrict our attention to legal matchings in the core tree $T_c$, because we showed that there exists at least one character $f_c$ that optimises $d^2_{MP}$, such that $f_c$ is induced by a legal matching. The central idea was that an \emph{illegal} matching induces a covering convex character $f$ whereby, in its unique optimal extension, an island comprising two taxa is incident to two mutations. By flipping the state assigned by the character to the two taxa, we could obtain a new covering convex character $f_c$ without the island, such that $|l_{f_c}(T')-l_{f_c}(T)| \geq |l_{f}(T')-l_{f}(T)|$. 

We can extend this idea to  bigger islands. This requires the introduction of weights to the nodes of $T_c$. The weight represents the number of taxa a node is connected to in $T$. Now, consider a matching $M$ of $T_c$. Let $I$ be a maximal, connected subtree of $T_c$ that does not include an edge of $M$. Suppose the total weight $w$ of $I$ is less than or equal to the number of edges of $M$ it is incident to. In this case, the state flipping argument described above will go through. (Specifically, flipping the state saves at least $w$ mutations in the tree with lower parsimony score, but cannot decrease the parsimony score of the other tree by more than $w$.) This state flipping could be applied, for example, in the situation described in Figure \ref{fig big illegal island}. 

Once all such situations have been excluded, we see that when computing $d^2_{MP}$ we can safely restrict our attention to matchings where every region enclosed between matching edges has total weight strictly larger than the number of incident matching edges.

\begin{figure}
\begin{center}
\begin{tikzpicture}[scale=0.8]
    \vertex  (u) at (0,0) {};
    \vertex  (v) at (1,0) {};
    \vertex  (uu) at (-1,1) {};
    \vertex  (ub) at (-1,-1) {};
    \vertex  (vu) at (2,1) {};
    \vertex  (vb) at (2,-1) {};
    
	\vertex [label=below:$a$] (a) at (-2,1) {};
    \vertex [label=below:$b$] (b) at (-2,-1) {};
    \vertex [label=below:$c$] (c) at (3,1) {};
    \vertex [label=below:$d$] (d) at (3,-1) {};

	\draw [line width = 1pt]
	(u) edge (uu)
	(u) edge (ub)
	(v) edge (vu)
	(v) edge (vb)
	(u) edge (v)
	
	(uu) edge (a)
	(ub) edge (b)
	(vu) edge (c)
	(vb) edge (d)
	
	(uu) edge [red] (-1,2)
	(ub) edge [red] (-1,-2)
	(vu) edge [red] (2,2)
	(vb) edge [red] (2,-2);
	
	\draw [line width = 1pt]
	(4.5,0) edge [->] node [label=above:$Construct\ T_c$] {} (7.5,0);

    \vertex  [label=below:$0$] (u) at (10,0) {};
    \vertex  [label=below:$0$] (v) at (11,0) {};
    \vertex  [label=left:$1$] (uu) at (9,1) {};
    \vertex  [label=left:$1$] (ub) at (9,-1) {};
    \vertex  [label=right:$1$] (vu) at (12,1) {};
    \vertex  [label=right:$1$] (vb) at (12,-1) {};

	\draw [line width = 1pt]
	(u) edge (uu)
	(u) edge (ub)
	(v) edge (vu)
	(v) edge (vb)
	(u) edge (v)
	
	(uu) edge [red] (9,2)
	(ub) edge [red] (9,-2)
	(vu) edge [red] (12,2)
	(vb) edge [red] (12,-2);
	
\end{tikzpicture}
\end{center}
\caption{A red edge represents an edge in the matching. This cannot be a fully legal matching, because in $T_c$ the island enclosed by the 4 matching edges has total weight 4. Hence, such a matching can safely be ignored when optimising $d^2_{MP}$.}
\label{fig big illegal island}
\end{figure}
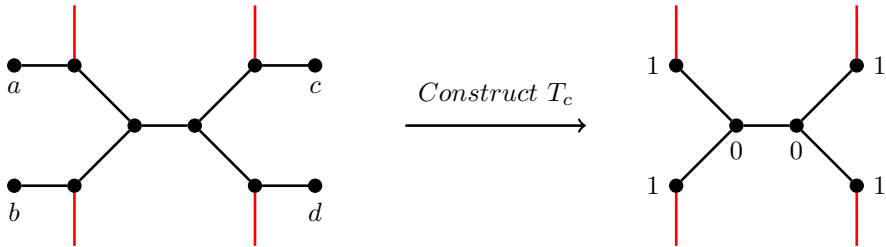

The following two definitions formalise this idea.

\begin{Definition}
Let $T$ be a binary phylogenetic tree on $X$ and $T_c$ the core tree of $T$. Every node of $T_c$ is assigned a weight that is equal to the number of leaves in $T$ it is adjacent to. Note that the weight of $v$ can also be expressed as $3 - \deg(v)$, where $deg(v)$ is the degree of $v$ in $T_{c}$.

A matching $M$ of $T_c$ is called $k$-legal if for every connected component $S$ of $T_c - M$ with at most $k$ vertices, the  total weight of $S$ (i.e., the sum of the weights of all its nodes) is strictly greater than the number of edges in $M$ that are incident to $S$.
\end{Definition}

Note that every matching $M$ of $T_c$ is a 0-legal and 1-legal matching because every component in $T_c - M$ has at least two vertices. With this new definition we can translate the definition of legal, as used earlier in the article, to 2-legal. The only way for a matching to not be 2-legal is for it to contain a component shown in Figure \ref{fig illegal island}.

\begin{Definition}
A matching $M$ of $T_c$ is called fully legal if it is $k$-legal for every value of $k$. In other words, for every component $S$ of $T_c - M$, the total weight of $S$ is strictly greater than the number of edges in $M$ that are incident to $S$. Note that if $M$ is $n$-legal with $n$ the number of leaves in $X$, $M$ is automatically fully legal.
\end{Definition}

Let $S$ be a component of $T_c - M$ for some matching $M$, let $s = |S|$ be the number of vertices and $m$ the number of edges in $M$ incident to $S$. The sum of the degrees of the vertices in $S$ is $2(s-1) + m$, as edges between vertices of $S$ are counted twice, and incident edges of $M$ counted once. Thus the total weight of $S$ is $3s - (2(s-1)+m) = s-m+2$, which allows us to reformulate the definition of $k$-legality as follows:

\begin{Lemma}
A matching $M$ of $T_c$ is $k$-legal if and only if, for every connected component $S$ of $T_c - M$ with at most $k$ vertices, the number of vertices $s$ and the number of incident edges $m$ satisfy $s > 2m-2$.
\end{Lemma}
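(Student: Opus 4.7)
The plan is to prove the equivalence by a direct edge-counting identity, turning the weight inequality in the definition of $k$-legality into the stated inequality $s > 2m - 2$. The key step is to evaluate $\sum_{v \in S} \deg_{T_c}(v)$ in two different ways, which the discussion immediately preceding the lemma essentially already carries out; my task is just to make the argument fully rigorous.

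First, I would fix a connected component $S$ of $T_c - M$ and set $s = |S|$. Since $T_c$ is a tree and $S$ is a connected subgraph of $T_c$, $S$ is itself a tree, hence has exactly $s-1$ edges. Next, I would classify the edges of $T_c$ with at least one endpoint in $S$: each such edge is either internal to $S$ or else does not lie in $T_c - M$ at all, i.e.\ it lies in $M$; this uses the maximality of $S$ as a connected component of $T_c - M$, so that no edge of $T_c \setminus M$ can leave $S$. Therefore the edges of $T_c$ incident to $S$ split into the $s-1$ internal edges and the $m$ matching edges from $M$ incident to $S$.

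Counting endpoint incidences, each internal edge contributes $2$ and each incident matching edge contributes $1$ to $\sum_{v \in S} \deg_{T_c}(v)$, giving
\[
\sum_{v \in S} \deg_{T_c}(v) = 2(s-1) + m.
\]
Plugging this into the definition of the total weight of $S$, I obtain
\[
\mathrm{wt}(S) = \sum_{v \in S}\bigl(3 - \deg_{T_c}(v)\bigr) = 3s - \bigl(2(s-1) + m\bigr) = s - m + 2.
\]
The $k$-legality condition $\mathrm{wt}(S) > m$ is thus equivalent to $s - m + 2 > m$, i.e.\ to $s > 2m - 2$. Since this equivalence holds for every component $S$ of $T_c - M$ with at most $k$ vertices, the lemma follows by quantifying over all such components.

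There is no real obstacle here; the only point requiring a little care is the first edge-classification step, where one must invoke the maximality of $S$ as a component of $T_c - M$ in order to conclude that every edge of $T_c$ incident to $S$ but not internal to $S$ is an edge of $M$. Once this is noted, the rest is an arithmetic identity.
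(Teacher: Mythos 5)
Your proposal is correct and is essentially identical to the paper's argument, which is carried out in the paragraph immediately preceding the lemma: both compute $\sum_{v\in S}\deg_{T_c}(v)=2(s-1)+m$, deduce that the total weight of $S$ is $s-m+2$, and observe that the condition ``weight $>m$'' is exactly $s>2m-2$. Your added remark about using the maximality of $S$ to classify the incident non-internal edges as matching edges is a small but valid point of extra rigour.
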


%
%
%
%
%
%
%
%
%
Theorem~\ref{thm:upperbound} only provides an upper bound on the number of 2-legal matchings. How far can this upper bound be lowered if we restrict our attention purely to fully legal matchings?

At this stage we only have partial answers. The greater $k$, the more complex the problem becomes, and the more auxiliary quantities are required. For $4$-legal matchings, one obtains a system of recurrences and in turn a $13$-dimensional bilinear map akin to the map $B$ in the proof of Theorem~\ref{thm:upperbound}. With the help of a computer - we provide a detailed Mathematica file in the supplementary material - one can then use the same method to show that the number of such matchings is $O((19/12)^n)$ in trees with $n$ nodes (this bound is, unlike Theorem~\ref{thm:upperbound}, probably not sharp). Note here that $19/12 \approx 1.5834  < \alpha \approx 1.5895$ - thus implying a marginally improved $O^{*}(1.5834^n)$  algorithm for $d^2_{MP}$. It follows that the number of fully legal matchings in trees with $n$ nodes is also $O((19/12)^n)$. Further improvements to this bound should be possible by considering larger values of $k$, but the resulting recursions become quite unwieldy.

Finally, we provide a lower bound. By means of a computer search, one finds that the trees with $15$ to $20$ nodes and the greatest number of fully legal matchings look as follows:

\begin{center}
\begin{tikzpicture}[scale=0.7]
\node[fill=black,circle,inner sep=1.5pt]  at (0,0) {};
\node[fill=black,circle,inner sep=1.5pt]  at (1,0) {};
\node[fill=black,circle,inner sep=1.5pt]  at (2,0) {};
\node[fill=black,circle,inner sep=1.5pt]  at (3,0) {};
\node[fill=black,circle,inner sep=1.5pt]  at (4,0) {};
\node[fill=black,circle,inner sep=1.5pt]  at (5,0) {};
\node[fill=black,circle,inner sep=1.5pt]  at (6,0) {};
\node[fill=black,circle,inner sep=1.5pt]  at (1,1) {};
\node[fill=black,circle,inner sep=1.5pt]  at (2,1) {};
\node[fill=black,circle,inner sep=1.5pt]  at (3,1) {};
\node[fill=black,circle,inner sep=1.5pt]  at (4,1) {};
\node[fill=black,circle,inner sep=1.5pt]  at (1,2) {};
\node[fill=black,circle,inner sep=1.5pt]  at (2,2) {};
\node[fill=black,circle,inner sep=1.5pt]  at (3,2) {};
\node[fill=black,circle,inner sep=1.5pt]  at (4,2) {};

\draw (0,0)--(6,0);
\draw (1,0)--(1,2);
\draw (2,0)--(2,2);
\draw (3,0)--(3,2);
\draw (4,0)--(4,2);

\node[fill=black,circle,inner sep=1.5pt]  at (8,0) {};
\node[fill=black,circle,inner sep=1.5pt]  at (9,0) {};
\node[fill=black,circle,inner sep=1.5pt]  at (10,0) {};
\node[fill=black,circle,inner sep=1.5pt]  at (11,0) {};
\node[fill=black,circle,inner sep=1.5pt]  at (12,0) {};
\node[fill=black,circle,inner sep=1.5pt]  at (13,0) {};
\node[fill=black,circle,inner sep=1.5pt]  at (14,0) {};
\node[fill=black,circle,inner sep=1.5pt]  at (15,0) {};
\node[fill=black,circle,inner sep=1.5pt]  at (10,1) {};
\node[fill=black,circle,inner sep=1.5pt]  at (11,1) {};
\node[fill=black,circle,inner sep=1.5pt]  at (12,1) {};
\node[fill=black,circle,inner sep=1.5pt]  at (13,1) {};
\node[fill=black,circle,inner sep=1.5pt]  at (10,2) {};
\node[fill=black,circle,inner sep=1.5pt]  at (11,2) {};
\node[fill=black,circle,inner sep=1.5pt]  at (12,2) {};
\node[fill=black,circle,inner sep=1.5pt]  at (13,2) {};

\draw (8,0)--(15,0);
\draw (10,0)--(10,2);
\draw (11,0)--(11,2);
\draw (12,0)--(12,2);
\draw (13,0)--(13,2);

\node[fill=black,circle,inner sep=1.5pt]  at (0,-4) {};
\node[fill=black,circle,inner sep=1.5pt]  at (1,-4) {};
\node[fill=black,circle,inner sep=1.5pt]  at (2,-4) {};
\node[fill=black,circle,inner sep=1.5pt]  at (3,-4) {};
\node[fill=black,circle,inner sep=1.5pt]  at (4,-4) {};
\node[fill=black,circle,inner sep=1.5pt]  at (5,-4) {};
\node[fill=black,circle,inner sep=1.5pt]  at (6,-4) {};
\node[fill=black,circle,inner sep=1.5pt]  at (1,-3) {};
\node[fill=black,circle,inner sep=1.5pt]  at (2,-3) {};
\node[fill=black,circle,inner sep=1.5pt]  at (3,-3) {};
\node[fill=black,circle,inner sep=1.5pt]  at (4,-3) {};
\node[fill=black,circle,inner sep=1.5pt]  at (5,-3) {};
\node[fill=black,circle,inner sep=1.5pt]  at (1,-2) {};
\node[fill=black,circle,inner sep=1.5pt]  at (2,-2) {};
\node[fill=black,circle,inner sep=1.5pt]  at (3,-2) {};
\node[fill=black,circle,inner sep=1.5pt]  at (4,-2) {};
\node[fill=black,circle,inner sep=1.5pt]  at (5,-2) {};

\draw (0,-4)--(6,-4);
\draw (1,-4)--(1,-2);
\draw (2,-4)--(2,-2);
\draw (3,-4)--(3,-2);
\draw (4,-4)--(4,-2);
\draw (5,-4)--(5,-2);

\node[fill=black,circle,inner sep=1.5pt]  at (8,-4) {};
\node[fill=black,circle,inner sep=1.5pt]  at (9,-4) {};
\node[fill=black,circle,inner sep=1.5pt]  at (10,-4) {};
\node[fill=black,circle,inner sep=1.5pt]  at (11,-4) {};
\node[fill=black,circle,inner sep=1.5pt]  at (12,-4) {};
\node[fill=black,circle,inner sep=1.5pt]  at (13,-4) {};
\node[fill=black,circle,inner sep=1.5pt]  at (14,-4) {};
\node[fill=black,circle,inner sep=1.5pt]  at (15,-4) {};
\node[fill=black,circle,inner sep=1.5pt]  at (9,-3) {};
\node[fill=black,circle,inner sep=1.5pt]  at (10,-3) {};
\node[fill=black,circle,inner sep=1.5pt]  at (11,-3) {};
\node[fill=black,circle,inner sep=1.5pt]  at (12,-3) {};
\node[fill=black,circle,inner sep=1.5pt]  at (13,-3) {};
\node[fill=black,circle,inner sep=1.5pt]  at (9,-2) {};
\node[fill=black,circle,inner sep=1.5pt]  at (10,-2) {};
\node[fill=black,circle,inner sep=1.5pt]  at (11,-2) {};
\node[fill=black,circle,inner sep=1.5pt]  at (12,-2) {};
\node[fill=black,circle,inner sep=1.5pt]  at (13,-2) {};

\draw (8,-4)--(15,-4);
\draw (9,-4)--(9,-2);
\draw (10,-4)--(10,-2);
\draw (11,-4)--(11,-2);
\draw (12,-4)--(12,-2);
\draw (13,-4)--(13,-2);

\node[fill=black,circle,inner sep=1.5pt]  at (-1,-8) {};
\node[fill=black,circle,inner sep=1.5pt]  at (0,-8) {};
\node[fill=black,circle,inner sep=1.5pt]  at (1,-8) {};
\node[fill=black,circle,inner sep=1.5pt]  at (2,-8) {};
\node[fill=black,circle,inner sep=1.5pt]  at (3,-8) {};
\node[fill=black,circle,inner sep=1.5pt]  at (4,-8) {};
\node[fill=black,circle,inner sep=1.5pt]  at (5,-8) {};
\node[fill=black,circle,inner sep=1.5pt]  at (6,-8) {};
\node[fill=black,circle,inner sep=1.5pt]  at (7,-8) {};
\node[fill=black,circle,inner sep=1.5pt]  at (1,-7) {};
\node[fill=black,circle,inner sep=1.5pt]  at (2,-7) {};
\node[fill=black,circle,inner sep=1.5pt]  at (3,-7) {};
\node[fill=black,circle,inner sep=1.5pt]  at (4,-7) {};
\node[fill=black,circle,inner sep=1.5pt]  at (5,-7) {};
\node[fill=black,circle,inner sep=1.5pt]  at (1,-6) {};
\node[fill=black,circle,inner sep=1.5pt]  at (2,-6) {};
\node[fill=black,circle,inner sep=1.5pt]  at (3,-6) {};
\node[fill=black,circle,inner sep=1.5pt]  at (4,-6) {};
\node[fill=black,circle,inner sep=1.5pt]  at (5,-6) {};

\draw (-1,-8)--(7,-8);
\draw (1,-8)--(1,-6);
\draw (2,-8)--(2,-6);
\draw (3,-8)--(3,-6);
\draw (4,-8)--(4,-6);
\draw (5,-8)--(5,-6);

\node[fill=black,circle,inner sep=1.5pt]  at (8,-8) {};
\node[fill=black,circle,inner sep=1.5pt]  at (9,-8) {};
\node[fill=black,circle,inner sep=1.5pt]  at (10,-8) {};
\node[fill=black,circle,inner sep=1.5pt]  at (11,-8) {};
\node[fill=black,circle,inner sep=1.5pt]  at (12,-8) {};
\node[fill=black,circle,inner sep=1.5pt]  at (13,-8) {};
\node[fill=black,circle,inner sep=1.5pt]  at (14,-8) {};
\node[fill=black,circle,inner sep=1.5pt]  at (15,-8) {};
\node[fill=black,circle,inner sep=1.5pt]  at (9,-7) {};
\node[fill=black,circle,inner sep=1.5pt]  at (10,-7) {};
\node[fill=black,circle,inner sep=1.5pt]  at (11,-7) {};
\node[fill=black,circle,inner sep=1.5pt]  at (12,-7) {};
\node[fill=black,circle,inner sep=1.5pt]  at (13,-7) {};
\node[fill=black,circle,inner sep=1.5pt]  at (14,-7) {};
\node[fill=black,circle,inner sep=1.5pt]  at (9,-6) {};
\node[fill=black,circle,inner sep=1.5pt]  at (10,-6) {};
\node[fill=black,circle,inner sep=1.5pt]  at (11,-6) {};
\node[fill=black,circle,inner sep=1.5pt]  at (12,-6) {};
\node[fill=black,circle,inner sep=1.5pt]  at (13,-6) {};
\node[fill=black,circle,inner sep=1.5pt]  at (14,-6) {};

\draw (8,-8)--(15,-8);
\draw (9,-8)--(9,-6);
\draw (10,-8)--(10,-6);
\draw (11,-8)--(11,-6);
\draw (12,-8)--(12,-6);
\draw (13,-8)--(13,-6);
\draw (14,-8)--(14,-6);

\end{tikzpicture}
\end{center}

Let us therefore consider the sequence of ``comb-shaped'' trees $C_k$ that consist of a ``shaft'' (a path on $k$ vertices) and $k$ ``teeth'' (paths of length $2$ attached to the vertices of the shaft), and let us estimate the number of fully legal matchings in these trees. The edges of a matching $M$ that belong to the shaft divide the tree $C_k$ into pieces. Let us count the number of fully legal possibilities for a piece that contains $\ell$ vertices of the shaft. For the first and last tooth, we only have two possibilities: there can either be no edge of it included in $M$, or precisely one, which is not incident to the shaft. For the other teeth, we have three possibilities: no edge, the edge that is incident to the shaft, or the edge that is not incident to the shaft. Suppose that the first possibility occurs $a$ times, the second $b$ times and the last $c$ times. We must clearly have $a+b+c = \ell-2$. The components of $C_k - M$ that are single edges do not affect legality, so we focus on the component that contains part of the shaft. Its total weight is $3a + b + 6$, $3a + b + 4$, or $3a + b + 2$, depending on whether $0$, $1$ or $2$ edges of the first and last tooth are contained in $M$. On the other hand, the total number of incident matching edges is $b+c+2$, $b+c+3$ or $b+c+4$ respectively. So the legality condition becomes $3a + 4 > c$, $3a + 1 > c$ or $3a - 2 > c$ respectively.

Noting that there are $\binom{\ell-2}{a,b,c}$ ways to distribute the matching edges of $M$ among the different teeth, we eventually find that the number of possibilities for a piece that contains $\ell$ vertices of the shaft is
\[
\sum_{a+b+c = \ell-2} \binom{a+b+c}{a,b,c}\big( [3a + 4 > c] + 2 [3a + 1 > c] + [3a - 2 > c] \big),
\]
using Iverson's notation: $[P] = 1$ if $P$ is true, $[P] = 0$ otherwise.

A fully legal matching of $C_k$ can be obtained by putting together such pieces to a sequence (here we ignore the fact that the first and last piece are slightly different for simplicity, as this does not actually affect the asymptotic growth rate). The generating function for the number of fully legal matchings in $C_k$ is therefore
\begin{align*}
& \frac{1}{1 - \sum_{\ell \geq 2} x^{\ell} \sum_{a+b+c = \ell-2} \binom{\ell-2}{a,b,c}\big( [3a + 4 > c] + 2 [3a + 1 > c] + [3a - 2 > c] \big)} \\
= \\
& \frac{1}{1 - \sum_{a,b,c \geq 0} \binom{a+b+c}{a,b,c}\big( [3a + 4 > c] + 2 [3a + 1 > c] + [3a - 2 > c] \big) x^{a+b+c+2}}
\end{align*}
The asymptotic growth rate of its coefficients is governed by the singularity that is closest to the origin, which is the unique positive real root of the denominator, see Chapter IV in \cite{Flajolet09}. If we let $\rho \approx 0.2633$ be that root, i.e.,
\[
1 = \sum_{a,b,c \geq 0} \binom{a+b+c}{a,b,c}\big( [3a + 4 > c] + 2 [3a + 1 > c] + [3a - 2 > c] \big) \rho^{a+b+c+2} = 3\rho^2 + 8\rho^3 + 24\rho^4 + \cdots,
\]
then the number of fully legal matchings of $C_k$ is $\Theta(\rho^{-k})$. Noting that $C_k$ has $3k$ vertices, we have thus constructed a sequence of trees for which the number of fully legal matchings grows as $\Theta(\beta^n)$, with $\beta = \rho^{-1/3} \approx 1.5603$. This provides us with a lower bound on how far the idea of legal matchings can be used to improve the algorithm further.


\section{Acknowledgements}
We thank Mark Jones for useful discussions. Ruben Meuwese was supported by the Dutch Research Council (NWO) KLEIN 1 grant \emph{Deep
kernelization for phylogenetic discordance}, project number OCENW.KLEIN.305. Stephan Wagner was supported by the Knut and Alice Wallenberg Foundation.




\printbibliography

@article{jones2021maximum,
  title={Maximum parsimony distance on phylogenetic trees: A linear kernel and constant factor approximation algorithm},
  author={Jones, M. and Kelk, S. and Stougie, L.},
  journal={Journal of Computer and System Sciences},
  volume={117},
  pages={165--181},
  year={2021},
  publisher={Elsevier}
}

@article{van2020reflections,
  title={Reflections on kernelizing and computing unrooted agreement forests},
  author={van Wersch, R. and Kelk, S. and Linz, S. and Stamoulis, G.},
  journal={To appear in Annals of Operations Research, arXiv preprint arXiv:2012.07354},
  year={2021}
}

@article{fischer2014
,   author =    {Fischer, M. and Kelk, S.}
,   title =     {On the {M}aximum {P}arsimony distance between phylogenetic trees}
,   journal =   {Annals of Combinatorics}
,   year =      {2016}
,   volume = {20}
,  number = {1}
, pages={87-113}
}

@article{moulton2015,
  title={A parsimony-based metric for phylogenetic trees},
  author={Moulton, V. and Wu, T.},
  journal={Advances in Applied Mathematics},
  volume={66},
  pages={22--45},
  year={2015},
  publisher={Elsevier}
}

@article{hein1996complexity,
  title={On the complexity of comparing evolutionary trees},
  author={Hein, J. and Jiang, T. and Wang, L. and Zhang, K.},
  journal={Discrete Applied Mathematics},
  volume={71},
  number={1-3},
  pages={153--169},
  year={1996},
  publisher={Elsevier}
}

@article{kelk2021sharp,
  title={Sharp upper and lower bounds on a restricted class of convex characters},
  author={Kelk, S. and Meuwese, R.},
  journal={arXiv preprint arXiv:2107.10871},
  year={2021}
}

@book{CyganBook,
 author = {Cygan, M. and Fomin, F. and Kowalik, L. and Lokshtanov, D. and Marx, D. and Pilipczuk, M. and Pilipczuk, M. and Saurabh, S.},
 title = {Parameterized algorithms},
 year = {2015},
 isbn = {3319212745, 9783319212746},
 edition = {1st},
 publisher = {Springer Publishing Company, Incorporated},
}

@article{andriantiana2011number,
  title={On the number of independent subsets in trees with restricted degrees},
  author={Andriantiana, E. and Wagner, S.},
  journal={Mathematical and Computer Modelling},
  volume={53},
  number={5-6},
  pages={678--683},
  year={2011},
  publisher={Elsevier}
}

@article{wagner2010maxima,
  title={Maxima and minima of the Hosoya index and the Merrifield-Simmons index},
  author={Wagner, S. and Gutman, I.},
  journal={Acta Applicandae Mathematicae},
  volume={112},
  number={3},
  pages={323--346},
  year={2010},
  publisher={Springer}
}

@book{kernelization2019,
title={Kernelization: Theory of parameterized preprocessing},
author={Fomin, F. and Lokshtanov, D. and Saurabh, S. and Zehavi, M.},
year={2019},
publisher={Cambridge University Press}
}

@book{fomin2010exact,
  title={Exact exponential algorithms},
  author={Fomin, F. and Kratsch, D.},
  year={2010},
  publisher={Springer}
}

@article{AllenS01,
	Abstract = {Leaf-labelled trees are widely used to describe evolutionary relationships, particularly in biology. In this setting, extant species label the leaves of the tree, while the internal vertices correspond to ancestral species. Various techniques exist for reconstructing these evolutionary trees from data, and an important problem is to determine how "far apart" two such reconstructed trees are from each other, or indeed from the true historical tree. To investigate this question requires tree metrics, and these can be induced by operations that rearrange trees locally. Here we investigate three such operations: nearest neighbour interchange (NNI), subtree prune and regraft (SPR), and tree bisection and reconnection (TBR). The SPR operation is of particular interest as it can be used to model biological processes such as horizontal gene transfer and recombination. We count the number of unrooted binary trees one SPR from any given unrooted binary tree, as well as providing new upper and lower bounds for the diameter of the adjacency graph of trees under SPR and TBR. We also show that the problem of computing the minimum number of TBR operations required to transform one tree to another can be reduced to a problem whose size is a function just of the distance between the trees (and not of the size of the two trees), and thereby establish that the problem is fixed-parameter tractable.},
	Author = {Allen, B. and Steel, M.},
	Da = {2001/06/01},
	Date-Added = {2021-02-23 14:50:28 +0000},
	Date-Modified = {2021-02-23 14:50:28 +0000},
	%Doi = {10.1007/s00026-001-8006-8},
	Id = {Allen2001},
	Isbn = {0219-3094},
	Journal = {Annals of Combinatorics},
	Number = {1},
	Pages = {1--15},
	Title = {Subtree transfer operations and their induced metrics on evolutionary trees},
	Ty = {JOUR},
	%Url = {https://doi.org/10.1007/s00026-001-8006-8},
	Volume = {5},
	Year = {2001},
	Bdsk-Url-1 = {https://doi.org/10.1007/s00026-001-8006-8},
	Bdsk-Url-2 = {http://dx.doi.org/10.1007/s00026-001-8006-8}
}

@article{BachooreB06,
	Author = {Bachoore, E. and Bodlaender, H.},
	Date-Added = {2021-02-23 18:34:24 +0000},
	Date-Modified = {2021-02-23 18:34:24 +0000},
	Db = {WorldCat.org},
	La = {Engels},
	Lk = {https://maastrichtuniversity.on.worldcat.org/oclc/1130154581},
	Publisher = {Utrecht University},
	Title = {Convex recoloring of leaf-colored trees},
	Ty = {MANSCPT},
	Url = {http://hdl.handle.net/1874/22179},
	%Url1 = {http://dspace.library.uu.nl/handle/1874/22179},
	%Url2 = {http://dspace.library.uu.nl/handle/1874/22179},
	Year = {2006},
	Bdsk-Url-1 = {http://hdl.handle.net/1874/22179}
}

@article{BulteauW19,
  author    = {Bulteau, L. and
               Weller, M.},
  title     = {Parameterized algorithms in bioinformatics: An overview},
  journal   = {Algorithms},
  volume    = {12},
  number    = {12},
  pages     = {256},
  year      = {2019},
  %url       = {https://doi.org/10.3390/a12120256},
  %doi       = {10.3390/a12120256},
  timestamp = {Wed, 15 Jan 2020 08:28:44 +0100},
  biburl    = {https://dblp.org/rec/journals/algorithms/BulteauW19.bib},
  bibsource = {dblp computer science bibliography, https://dblp.org}
}

@article{ChenFS15,
  author    = {Chen, J. and
                Fan, J. and
                Sze, S.},
  title     = {Parameterized and approximation algorithms for maximum agreement forest in multifurcating trees},
  journal   = {Theoretical Computer Science},
  volume    = {562},
  pages     = {496--512},
  year      = {2015},
  %url       = {https://doi.org/10.1016/j.tcs.2014.10.031},
  %doi       = {10.1016/j.tcs.2014.10.031},
  timestamp = {Wed, 17 Feb 2021 21:58:42 +0100},
  biburl    = {https://dblp.org/rec/journals/tcs/ChenFS15.bib},
  bibsource = {dblp computer science bibliography, https://dblp.org}
}

@book{CoverT06,
  author    = {T. Cover and
               J. Thomas},
  title     = {Elements of information theory {(2.} ed.)},
  publisher = {Wiley},
  year      = {2006},
  %url       = {http://www.elementsofinformationtheory.com/},
  isbn      = {978-0-471-24195-9},
  timestamp = {Wed, 10 Jul 2019 10:47:04 +0200},
  biburl    = {https://dblp.org/rec/books/daglib/0016881.bib},
  bibsource = {dblp computer science bibliography, https://dblp.org}
}

@book{DressHKMS12,
  title={Basic phylogenetic combinatorics},
  author={Dress, A. and Huber, K. and Koolen, J. and Moulton, V. and Spillner, A.},
  year={2012},
  publisher={Cambridge University Press}
}

@Article{Fitch71, 
author={Fitch, W.},
year= 1971,
title= {Toward defining the course of evolution: minimum change for a specific tree topology.},
journal ={Systematic Zoology},
volume= {20},
number={4},
pages={406--416},
}

@article{Hartigan73,
  title={Minimum mutation fits to a given tree},
  author={Hartigan, J.},
  journal={Biometrics},
  pages={53--65},
  year={1973},
  publisher={JSTOR}
}

@article{KelkISW16,
  author    = {Kelk, S. and
               van Iersel, L. and
               Scornavacca, C. and
               Weller, M.},
  title     = {Phylogenetic incongruence through the lens of Monadic Second Order
               logic},
  journal   = {Journal of Graph Algorithms and Applications},
  volume    = {20},
  number    = {2},
  pages     = {189--215},
  year      = {2016},
  %url       = {https://doi.org/10.7155/jgaa.00390},
  %doi       = {10.7155/jgaa.00390},
  timestamp = {Tue, 16 Feb 2021 15:57:03 +0100},
  biburl    = {https://dblp.org/rec/journals/jgaa/KelkISW16.bib},
  bibsource = {dblp computer science bibliography, https://dblp.org}
}

@article{KelkF17,
	Abstract = {Within the field of phylogenetics there is great interest in distance measures to quantify the dissimilarity of two trees. Recently, a new distance measure has been proposed: the Maximum Parsimony (MP) distance. This is based on the difference of the parsimony scores of a single character on both trees under consideration, and the goal is to find the character which maximizes this difference. Here we show that computation of MP distance on two binary phylogenetic trees is NP-hard. This is a highly nontrivial extension of an earlier NP-hardness proof for two multifurcating phylogenetic trees, and it is particularly relevant given the prominence of binary trees in the phylogenetics literature. As a corollary to the main hardness result we show that computation of MP distance is also hard on binary trees if the number of states available is bounded. In fact, via a different reduction we show that it is hard even if only two states are available. Finally, as a first response to this hardness we give a simple Integer Linear Program (ILP) formulation which is capable of computing the MP distance exactly for small trees (and for larger trees when only a small number of character states are available) and which is used to computationally verify several auxiliary results required by the hardness proofs.},
	Author = {Kelk, S. and Fischer, M},
	Da = {2017/12/01},
	Date-Added = {2021-06-30 12:48:00 +0000},
	Date-Modified = {2021-06-30 12:48:00 +0000},
	%Doi = {10.1007/s00026-017-0361-1},
	Id = {Kelk2017},
	Isbn = {0219-3094},
	Journal = {Annals of Combinatorics},
	Number = {4},
	Pages = {573--604},
	Title = {On the complexity of computing {M}{P} distance between binary phylogenetic trees},
	Ty = {JOUR},
	%Url = {https://doi.org/10.1007/s00026-017-0361-1},
	Volume = {21},
	Year = {2017},
	Bdsk-Url-1 = {https://doi.org/10.1007/s00026-017-0361-1},
	Bdsk-Url-2 = {http://dx.doi.org/10.1007/s00026-017-0361-1}
}

@article{KelkS17,
  author    = {Kelk, S. and
               Stamoulis, G.},
  title     = {A note on convex characters, Fibonacci numbers and exponential-time
               algorithms},
  journal   = {Advances in Applied Mathematics},
  volume    = {84},
  pages     = {34--46},
  year      = {2017},
  %url       = {https://doi.org/10.1016/j.aam.2016.10.003},
  %doi       = {10.1016/j.aam.2016.10.003},
  timestamp = {Fri, 26 Jul 2019 11:30:52 +0200},
  biburl    = {https://dblp.org/rec/journals/aam/KelkS17.bib},
  bibsource = {dblp computer science bibliography, https://dblp.org}
}

@inproceedings{Rosenfeld21,
  author    = {Rosenfeld, M.},
  editor    = {D{\'{a}}niel Marx},
  title     = {The growth rate over trees of any family of sets defined by a monadic
               second order formula is semi-computable},
  booktitle = {Proceedings of the 2021 {ACM-SIAM} Symposium on Discrete Algorithms,
               {SODA} 2021, Virtual Conference, January 10 - 13, 2021},
  pages     = {776--795},
  publisher = {{SIAM}},
  year      = {2021},
  %url       = {https://doi.org/10.1137/1.9781611976465.49},
  %doi       = {10.1137/1.9781611976465.49},
  timestamp = {Thu, 15 Jul 2021 13:49:05 +0200},
  biburl    = {https://dblp.org/rec/conf/soda/Rosenfeld21.bib},
  bibsource = {dblp computer science bibliography, https://dblp.org}
}

@inproceedings{Rote19,
  author    = {Rote, G.},
  editor    = {Timothy M. Chan},
  title     = {The maximum number of minimal dominating sets in a tree},
  booktitle = {Proceedings of the Thirtieth Annual {ACM-SIAM} Symposium on Discrete
               Algorithms, {SODA} 2019, San Diego, California, USA, January 6-9,
               2019},
  pages     = {1201--1214},
  publisher = {{SIAM}},
  year      = {2019},
  %url       = {https://doi.org/10.1137/1.9781611975482.73},
  %doi       = {10.1137/1.9781611975482.73},
  timestamp = {Thu, 15 Jul 2021 13:49:01 +0200},
  biburl    = {https://dblp.org/rec/conf/soda/Rote19a.bib},
  bibsource = {dblp computer science bibliography, https://dblp.org}
}

@book{SempleS03,
  author    = {Semple, C. and 
               Steel, M.},
  title     = {Phylogenetics},
  publishe  = {Oxford University Press},
  year      = {2003}
}

@article{Steel92,
	Abstract = {In taxonomy and other branches of classification it is useful to know when tree-like classifications on overlapping sets of labels can be consistently combined into a parent tree. This paper considers the computation complexity of this problem. Recognizing when a consistent parent tree exists is shown to be intractable (NP-complete) for sets of unrooted trees, even when each tree in the set classifies just four labels. Consequently determining the compatibility of qualitative characters and partial binary characters is, in general, also NP-complete. However for sets of rooted trees an algorithm is described which constructs the ``strict consensus tree''of all consistent parent trees (when they exist) in polynomial time. The related question of recognizing when a set of subtrees uniquely defines a parent tree is also considered, and a simple necessary and sufficient condition is described for rooted trees.},
	Author = {Steel, M.},
	Da = {1992/01/01},
	%Doi = {10.1007/BF02618470},
	Id = {Steel1992},
	Isbn = {1432-1343},
	Journal = {Journal of Classification},
	Number = {1},
	Pages = {91--116},
	Title = {The complexity of reconstructing trees from qualitative characters and subtrees},
	Ty = {JOUR},
	%Url = {https://doi.org/10.1007/BF02618470},
	Volume = {9},
	Year = {1992},
	Bdsk-Url-1 = {https://doi.org/10.1007/BF02618470},
	Bdsk-Url-2 = {http://dx.doi.org/10.1007/BF02618470}
}

@article{WhiddenBZ13,
  author    = {Whidden, C. and
               Beiko, R. and
               Zeh, N.},
  title     = {Fixed-parameter algorithms for maximum agreement forests},
  journal   = {{SIAM} Journal on Computing},
  volume    = {42},
  number    = {4},
  pages     = {1431--1466},
  year      = {2013},
  %url       = {https://doi.org/10.1137/110845045},
  %doi       = {10.1137/110845045},
  timestamp = {Thu, 08 Jun 2017 08:59:23 +0200},
  biburl    = {https://dblp.org/rec/journals/siamcomp/WhiddenBZ13.bib},
  bibsource = {dblp computer science bibliography, https://dblp.org}
}

@book {Flajolet09,
    AUTHOR = {Flajolet, P. and Sedgewick, R.},
     TITLE = {Analytic combinatorics},
 PUBLISHER = {Cambridge University Press, Cambridge},
      YEAR = {2009},
     PAGES = {xiv+810},
      ISBN = {978-0-521-89806-5},
       %DOI = {10.1017/CBO9780511801655},
       %URL = {https://doi.org/10.1017/CBO9780511801655},
}
\Addresses

\newpage

\section*{Appendix A: Fitch's algorithm}

Fitch’s algorithm \cite{Fitch71}, which computes $l_f(T)$ and a corresponding optimal extension $g$ of $f$ to $T$, has two phases. In the first phase, known as the \emph{bottom-up} phase, we start by assigning to each taxon a subset of states consisting of only the state it is assigned by $f$.
The internal nodes of $T$ are assigned subsets of states recursively, as follows. Suppose
a node $p$ has two children u and v, and the bottom-up phase has already assigned
subsets $F(u)$ and $F(v)$ to the two children, respectively. If $F(u) \cap F(v) \neq \emptyset$ then
set $F(p) = F(u) \cap F(v)$ (in which case we say that $p$ is an \emph{intersection} node). If
$F(u) \cap F(v) = \emptyset$ then set $F(p) = F(u) \cup F(v)$ (in which case we say that $p$ is a
\emph{union} node). The number of union nodes in the bottom-up phase is equal to $l_f(T)$.
To actually create an optimal extension $g$ of $f$ to $T$, we require the \emph{top-down} phase of Fitch’s
algorithm. Start at the root $r$ and let $g(r)$ be an arbitrary element in $F(r)$. For an internal
node $u$ with parent $p$, we set $g(u) = g(p)$ (if $g(p) \in F(u)$) and otherwise (i.e.
$g(p) \not \in F(u)$) set $g(u)$ to be an arbitrary element of $F(u)$. Note that the arbitrary choices in the top-down phase can be used to generate multiple distinct optimal extensions.

\newpage
\section*{Appendix B}

The vectors of the set $\mathcal{S}$, with $R = 2793745$:

\begin{align*}
\mathbf{v}_1 &= \alpha^{-2} [1,0,0,0,0]^T, \\
\mathbf{v}_2 &= \alpha^{-6} \Big[2 + \frac{3446}{\sqrt{R}}, 2 + \frac{3170}{\sqrt{R}},0,0,0\Big]^T, \\
\mathbf{v}_3 &= \alpha^{-6} [4,4,0,0,0]^T, \\
\mathbf{v}_4 &= \alpha^{-21} \Big[2016 + \frac{3380832}{\sqrt{R}}, 2208 + \frac{3671904}{\sqrt{R}},0,0,0\Big]^T, \\
\mathbf{v}_5 &= \alpha^{-21} [4032,4416,0,0,0]^T, \\
\mathbf{v}_6 &= \alpha^{-21} \Big[2016 + \frac{3367584}{\sqrt{R}}, 2208 + \frac{3693984}{\sqrt{R}},0,0,0\Big]^T, \\
\mathbf{v}_7 &= \alpha^{-21} \Big[\frac{5628705696}{R} + \frac{3367584}{\sqrt{R}}, \frac{6174294432}{R} +
\frac{3693984}{\sqrt{R}},0,0,0\Big]^T, \\
\mathbf{v}_8 &= \alpha^{-14} \Big[\frac{1969470208}{9R} + \frac{3291861361024}{9R^{3/2}}, \frac{2179611976}{9R} + \frac{3643134552760}{9R^{3/2}},0,0,0\Big]^T, \\
\mathbf{v}_9 &= \alpha^{-10} \Big[12 + \frac{20676}{\sqrt{R}}, 14 + \frac{22466}{\sqrt{R}}, 0, 0, 0\Big]^T, \\
\mathbf{v}_{10} &= \alpha^{-3} \Big[\frac{143768593}{108R} + \frac{85015}{108\sqrt{R}}, \frac{105898423}{72R} + \frac{66145}{72\sqrt{R}},0,0,0\Big]^T, \\
\mathbf{v}_{11} &= \alpha^{-3} \Big[{}- \frac{1625}{108} + \frac{2887105}{108\sqrt{R}} , \frac{1715}{72} - \frac{2737003}{72\sqrt{R}},0,0,0 \Big]^T, \\
\mathbf{v}_{12} &= \alpha^{-3} \Big[ \frac{108745}{108} - \frac{65 \sqrt{R}}{108}, \frac{28441}{24} - \frac{17 \sqrt{R}}{24},0,0,0 \Big]^T, \\
\mathbf{v}_{13} &= \alpha^{-3} \Big[ \frac12 + \frac{1447}{2 \sqrt{R}}, \frac12 + \frac{1999}{2\sqrt{R}},0,0,0 \Big]^T, \\
\mathbf{v}_{14} &= \alpha^{-14} \Big[ \frac{201818664}{R} + \frac{120744}{\sqrt{R}}, \frac{267240144}{R}+ \frac{159888}{\sqrt{R}},0,0,0 \Big]^T, \\
\mathbf{v}_{15} &= \alpha^{-14} \Big[ 72 + \frac{120744}{\sqrt{R}}, 96 + \frac{159888}{\sqrt{R}}, 0, 0, 0 \Big]^T, \\
\mathbf{v}_{16} &= \alpha^{-14} [144, 192, 0, 0, 0]^T, \\
\mathbf{v}_{17} &= \alpha^{-7} \Big[ \frac{31}{18} + \frac{116617}{18\sqrt{R}}, \frac{281}{54} + \frac{208991}{54\sqrt{R}}, 0, 0, 0\Big]^T, \\
\mathbf{v}_{18} &= \alpha^{-7} \Big[ \frac{108745}{18} - \frac{65\sqrt{R}}{18}, \frac{219163}{27} - \frac{131 \sqrt{R}}{27}, 0, 0, 0\Big]^T, \\
\mathbf{v}_{19} &= \Big[ \frac{8142156817}{23328R} + \frac{3615127}{23328 \sqrt{R}},
\frac{2689931479}{11664R} + \frac{4131265}{11664\sqrt{R}}, 0, 0, 0\Big]^T, \\
\mathbf{v}_{20} &= \Big[ -\frac{2104505}{23328} + \frac{3526059745}{23328\sqrt{R}}, \frac{2807237}{23328} - \frac{4680672973}{23328 \sqrt{R}}, 0, 0, 0\Big]^T, \\
\mathbf{v}_{21} &= \Big[ \frac{11814523825}{23328} - \frac{7068425 \sqrt{R}}{23328}, \frac{1999380955}{2916} - \frac{1196195 \sqrt{R}}{2916}, 0, 0, 0\Big]^T, \\
\mathbf{v}_{22} &= \Big[ \frac{7345}{432} - \frac{12119705}{432\sqrt{R}}, - \frac{6197}{288} + \frac{10499773}{288\sqrt{R}}, 0, 0, 0\Big]^T, \\
\mathbf{v}_{23} &= \Big[ \frac{2443777}{8R} + \frac{1447}{8\sqrt{R}}, \frac{3242521}{8R} + \frac{1999}{8\sqrt{R}}, 0, 0, 0\Big]^T, 
\end{align*}

\newpage

\begin{align*}
\mathbf{v}_{24} &= \Big[ \frac{32805161}{108R} + \frac{54881040239}{108R^{3/2}}, \frac{29641217}{72R} + \frac{49498725911}{72R^{3/2}}, 0, 0, 0\Big]^T, \\
\mathbf{v}_{25} &= \Big[ \frac{1237818669070513}{1458R^2} + \frac{740509100311}{1458R^{3/2}}, \frac{838121265457801}{729R^2} + \frac{501401051935}{729R^{3/2}}, 0, 0, 0\Big]^T, \\ 
\mathbf{v}_{26} &= \alpha^{-15} \Big[ \frac{71754261114955960}{81R^2} + \frac{119933657317951854472}{81 R^{5/2}}, \\
&\qquad \frac{291727305240092752}{243R^2} + \frac{487607592266798252080}{243R^{5/2}}, 0, 0, 0\Big]^T, \\
\mathbf{v}_{27} &= \alpha^{-8} \Big[ \frac{140725263328052732114393}{1458R^3} + \frac{84193524123331244303}{1458R^{5/2}}, \\
&\qquad \frac{286283955528410226595427}{2187R^3} + \frac{171278806193160289301}{2187R^{5/2}}, 0, 0, 0\Big]^T, \\
\mathbf{v}_{28} &= \alpha^{-4} \Big[ \frac{10512907255001141}{1944R^2} + \frac{6289683368723}{1944R^{3/2}}, \frac{10709134162880129}{1458R^2} + \frac{6407069939495}{1458R^{3/2}}, 0, 0, 0\Big]^T, \\
\mathbf{v}_{29} &= \Big[ \frac{794956577}{2592R} + \frac{464135}{2592\sqrt{R}}, \frac{524137043}{1296R} + \frac{325541}{1296\sqrt{R}}, 0, 0, 0\Big]^T, \\
\mathbf{v}_{30} &= \alpha^{-15} \Big[ \frac{17072730067}{54R} + \frac{28535661518197}{54R^{3/2}}, \frac{3873220609}{9R} + \frac{6473856199063}{9 R^{3/2}}, 0, 0, 0\Big]^T, \\
\mathbf{v}_{31} &= \alpha^{-8} \Big[ \frac{803579524335268753}{23328R^2} + \frac{480751987830295}{23328R^{3/2}}, \\
&\qquad \frac{547329217239002015}{11664R^2} + \frac{327434508313145}{11664R^{3/2}}, 0, 0, 0\Big]^T, \\
\mathbf{v}_{32} &= \alpha^{-8} \Big[ \frac{293738136445}{23328R} + \frac{470526609087163}{23328R^{3/2}}, \frac{197305889945}{11664R} + \frac{325111894094399}{11664R^{3/2}}, 0, 0, 0\Big]^T, \\
\mathbf{v}_{33} &= \alpha^{-8} \Big[ \frac{8133848923273}{23328R} - \frac{4522191905}{23328\sqrt{R}}, -\frac{7653084333757}{11664R} + \frac{4813128245}{11664\sqrt{R}}, 0, 0, 0\Big]^T, \\
\mathbf{v}_{34} &= \alpha^{-2} \Big[ \frac{34964309}{54R} + \frac{21179}{54\sqrt{R}}, \frac{29330411}{27R} + \frac{17753}{27\sqrt{R}}, 0, 0, 0\Big]^T, \\
\mathbf{v}_{35} &= \alpha^{-17} \Big[ \frac{2039517464}{3R} + \frac{3408952424936}{3R^{3/2}}, \frac{10267634576}{9R} + \frac{17161853188592}{9 R^{3/2}}, 0, 0, 0\Big]^T, \\
\mathbf{v}_{36} &= \alpha^{-13} \Big[ 38 + \frac{63818}{\sqrt{R}}, 64 + \frac{106960}{\sqrt{R}}, 0, 0, 0\Big]^T, \\
\mathbf{v}_{37} &= \alpha^{-6} \Big[ \frac{892769641}{216R} + \frac{538495}{216\sqrt{R}}, \frac{62416754}{9R} + \frac{37790}{9\sqrt{R}}, 0, 0, 0\Big]^T, \\
\mathbf{v}_{38} &= \alpha^{-6} \Big[{}-\frac{1355}{216} + \frac{3337411}{216\sqrt{R}}, \frac{5}{2} + \frac{8339}{2\sqrt{R}}, 0, 0, 0\Big]^T, \\
\mathbf{v}_{39} &= \alpha^{-4} [1,2,0,0,0]^T, \\
\mathbf{v}_{40} &= \alpha^{-4} \Big[\frac{1895}{216} - \frac{2436799}{216\sqrt{R}}, 0, 0, - \frac{1625}{108} + \frac{2887105}{108\sqrt{R}}, 0\Big]^T, \\
\mathbf{v}_{41} &= \alpha^{-4} \Big[ \frac{605232455}{216R} + \frac{368465}{216\sqrt{R}}, 0, 0, \frac{143768593}{108R} + \frac{85015}{108\sqrt{R}}, 0\Big]^T, \\
\mathbf{v}_{42} &= \alpha^{-11} \Big[ 26 + \frac{43142}{\sqrt{R}}, 0, 0, 12 + \frac{20676}{\sqrt{R}}, 0 \Big]^T, \\
\mathbf{v}_{43} &= \alpha^{-15} \Big[ \frac{4149082184}{9R} + \frac{6934995913784}{9R^{3/2}}, 0, 0, \frac{1969470208}{9R} + \frac{3291861361024}{9R^{3/2}}, 0 \Big]^T, \\
\mathbf{v}_{44} &= \Big[ \frac{23696513}{54R} + \frac{14327}{54\sqrt{R}}, 0, 0, 
\frac{1877966}{9R} + \frac{1142}{9\sqrt{R}}, 0 \Big]^T, \\
\mathbf{v}_{45} &= \Big[ \frac18 + \frac{2551}{8\sqrt{R}}, 0, 0, \frac18 + \frac{343}{8\sqrt{R}}, 0 \Big]^T, 
\end{align*}

\newpage

\begin{align*}
\mathbf{v}_{46} &= \Big[ \frac{18403}{54} - \frac{11\sqrt{R}}{54}, 0, 0, \frac{11711}{72} - \frac{7\sqrt{R}}{72}, 0 \Big]^T, \\
\mathbf{v}_{47} &= \Big[ \frac{73}{216} - \frac{8081}{216\sqrt{R}}, 0, 0, - \frac{7}{36} + \frac{20783}{36\sqrt{R}}, 0 \Big]^T, \\
\mathbf{v}_{48} &= \alpha^{-7} [8,0,0,4,0]^T, \\
\mathbf{v}_{49} &= \alpha^{-7} \Big[ 4 + \frac{6616}{\sqrt{R}}, 0, 0, 2 + \frac{3446}{\sqrt{R}}, 0 \Big]^T, \\
\mathbf{v}_{50} &= \alpha^{-3} [1, 0, 0, 1, 0]^T, \\
\mathbf{v}_{51} &= \alpha^{-5} \Big[ \frac{1895}{216} - \frac{2436799}{216\sqrt{R}}, 0, -\frac{1625}{108} + \frac{2887105}{108\sqrt{R}}, \frac{1895}{216} - \frac{2436799}{216\sqrt{R}},
  0\Big]^T, \\
\mathbf{v}_{52} &= \alpha^{-5} \Big[ \frac{605232455}{216R} + \frac{368465}{216\sqrt{R}}, 0, 
 \frac{143768593}{108R} + \frac{85015}{108\sqrt{R}}, \frac{605232455}{216R} + \frac{368465}{216\sqrt{R}}, 0\Big]^T, \\
\mathbf{v}_{53} &= \alpha^{-12} \Big[26 + \frac{43142}{\sqrt{R}}, 0, 12 + \frac{20676}{\sqrt{R}}, 
 26 + \frac{43142}{\sqrt{R}}, 0 \Big]^T, \\
\mathbf{v}_{54} &= \alpha^{-16} \Big[ \frac{4149082184}{9R} + \frac{6934995913784}{9R^{3/2}}, 0, 
 \frac{1969470208}{9R} + \frac{3291861361024}{9R^{3/2}}, \\
&\qquad \frac{4149082184}{9R} + \frac{6934995913784}{9R^{3/2}}, 0 \Big]^T, \\
\mathbf{v}_{55} &= \alpha^{-1} \Big[ \frac{23696513}{54R} + \frac{14327}{54\sqrt{R}}, 0, \frac{1877966}{9R} + \frac{1142}{9\sqrt{R}}, \frac{23696513}{54R} + \frac{14327}{54\sqrt{R}}, 0 \Big]^T, \\
\mathbf{v}_{56} &= \alpha^{-1} \Big[ \frac18 + \frac{2551}{8 \sqrt{R}}, 0, \frac18 + \frac{343}{8\sqrt{R}}, 
 \frac18 + \frac{2551}{8\sqrt{R}}, 0 \Big]^T, \\
\mathbf{v}_{57} &= \alpha^{-1} \Big[ \frac{18403}{54} - \frac{11 \sqrt{R}}{54}, 0, \frac{11711}{72} - \frac{7\sqrt{R}}{72}, \frac{18403}{54} - \frac{11\sqrt{R}}{54}, 0 \Big]^T \\
\mathbf{v}_{58} &= \alpha^{-1} \Big[ \frac{73}{216} - \frac{8081}{216\sqrt{R}}, 0, - \frac{7}{36} + \frac{20783}{36 \sqrt{R}}, \frac{73}{216} - \frac{8081}{216\sqrt{R}}, 0\Big]^T, \\
\mathbf{v}_{59} &= \alpha^{-8} [8, 0, 4, 8, 0]^T, \\
\mathbf{v}_{60} &= \alpha^{-8} \Big[4 + \frac{6616}{\sqrt{R}}, 0, 2 + \frac{3446}{\sqrt{R}}, 4 + \frac{6616}{\sqrt{R}}, 0 \Big]^T, \\
\mathbf{v}_{61} &= \alpha^{-4} [1,0,1,1,0]^T, \\
\mathbf{v}_{62} &= \alpha^{-1} [0, 0, 0, 0, 1]^T. \\
\end{align*}

The set $S$, consisting of vectors $[v,w,x,y,z]^T$, can be split into 4 groups:
\begin{enumerate}
    \item Vectors 1 to 39 have $x=y=z=0$ and the rest is positive.
    \item Vectors 40 to 50 have $w=x=z=0$ and the rest is positive.
    \item Vectors 51 to 61 have $w=z=0$ and $v=y$ and the rest is positive.
    \item Vector 62 has $v=w=x=y=0$ and $z>0$.
\end{enumerate}

Now, if we look at $B(a,b)$ we can observe that we can split the image into 4 groups. When we say ``combine" two vectors $a$ and $b$ we mean look at the result of $B(a,b)$.

\begin{enumerate}[label=\roman*]
    \item Vector combinations from every group except vertex 62 will result in a vector that has $x=y=z=0$. This is because $z=0$ in all the vectors we combine.
    \item Combining vector 62 with group 1 above will result in a vector that has $w=x=z=0$.
    \item Combining vector 62 with group 2 above will result in a vector that has $w=z=0$ and $v=y$ and the rest is positive.
    \item Combining vector 62 with group 3 above will result in a vector that has $w=z=0$ and $x=y$ and the rest is positive.
\end{enumerate}

These observations will make checking that $S$ has the desired property much easier; we provide further details in the supplementary Mathematica files:
\url{https://github.com/skelk2001/legal-matchings}

\section*{Appendix C}

Let $M_n$ denote the maximum number of legal matchings in a tree with $n$ nodes. In the following, we determine a lower bound on the rate of growth of $M_n$. Recall that a \emph{good} tree is a tree with maximum degree 3, where no two degree 2 nodes are adjacent.

\begin{Lemma}
\label{lem:lb}
$M_n = \Omega(\alpha^n)$ with $\alpha = (13384+8\sqrt{2793745})^{1/22} \approx 1.5895$. This lower bound is achieved on good trees.
\end{Lemma}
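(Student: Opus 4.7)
The plan is to exhibit an explicit infinite family $\{T_k\}_{k \geq 1}$ of good trees with $|V(T_k)| = 22k + O(1)$ whose number of legal matchings grows as $\Theta(\alpha^{22k}) = \Theta(\alpha^{|V(T_k)|})$. The exponent $1/22$ in the definition of $\alpha$, combined with the remark following Theorem~\ref{thm:upperbound} which identifies the limit vector $\alpha^{-7}[4+6616/\sqrt{R},\,0,\,0,\,2+3446/\sqrt{R},\,0]^T$ as the direction of the extremal sequence, strongly suggests a periodic construction: $T_k$ should consist of a backbone to which $k$ copies of a small repeating gadget $G$ of constant size are attached. Since the limit vector has nonzero entries only in the $a_0$ and $b_1$ coordinates, $G$ must drive the reduced vector in this direction, which narrows the candidates to caterpillar-like structures reminiscent of the good-tree examples depicted in Section~\ref{sec:discussion}.

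To pin down $G$ precisely, I would run the iterative scheme of \cite{Rosenfeld21} in reverse: starting from the limit vector as the fixed direction and undoing the bilinear recursion corresponding to attaching one copy of a candidate gadget, one recovers the extremal tree family explicitly. Alternatively, a finite computer search over small good trees, ranking them by $(\text{legal matchings})^{1/n}$, will identify the correct gadget once $n$ is large enough. Having fixed $G$, applying the recursions for $(a_0,a_1,b_0,b_1)$ from Section~\ref{Sec Matching Theory} to the operation ``attach one more copy of $G$'' yields a nonnegative linear map $A$ on $\R^4$ (or $\R^5$, if one retains the empty-tree indicator) such that the reduced vector for $T_{k+1}$ is $A$ applied to that for $T_k$. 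The total number of legal matchings of $T_k$ is the sum of the entries of $A^k \mathbf{u}_0$, which by Perron--Frobenius grows as $\Theta(\rho(A)^k)$, provided $A$ is primitive; this primitivity can be checked directly for the small matrix in question.

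It then remains to verify algebraically that $\rho(A) = \alpha^{22} = 13384 + 8\sqrt{2793745}$. Noting that $\alpha^{22}$ and its algebraic conjugate $13384 - 8\sqrt{2793745}$ together satisfy the quadratic $x^2 - 26768x + 331776 = 0$, the verification reduces to showing that the characteristic polynomial of $A$ is divisible by this quadratic, with the remaining roots having strictly smaller modulus. This is routine linear algebra on a small matrix. The essential difficulty, and therefore the main obstacle, is pinning down the \emph{correct} gadget so that its transfer matrix has precisely this spectral structure; once the explicit family $\{T_k\}$ is in hand, checking that each $T_k$ is a good tree (maximum degree $3$, no two adjacent degree-$2$ nodes) is immediate by construction, and the desired lower bound $M_n = \Omega(\alpha^n)$ follows.
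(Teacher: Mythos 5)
Your overall strategy is the same as the paper's: a periodic ``comb''-type construction in which a fixed gadget of $22$ nodes is repeatedly attached at the root, a transfer matrix describing the effect of one attachment, and a Perron--Frobenius/eigenvalue computation showing the growth rate is exactly $13384+8\sqrt{2793745}$ per period. Your algebraic check (that $\alpha^{22}$ and its conjugate are the roots of $x^2-26768x+331776$) is consistent with the paper, whose $2\times 2$ transfer matrix $\begin{bmatrix} 19888 & 10144 \\ 13456 & 6880 \end{bmatrix}$ has trace $26768$ and determinant $331776$.

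However, there is a genuine gap: you never actually exhibit the gadget. The entire substance of this lower-bound lemma is the existence of a concrete family of good trees attaining the rate $\alpha$, and your proposal replaces that with two heuristics for \emph{finding} the gadget (reverse-engineering the iteration of \cite{Rosenfeld21}, or a computer search ranked by $(\text{legal matchings})^{1/n}$), followed by a verification step that cannot be carried out until the gadget is in hand. The paper gives the explicit $22$-node piece (Figure~\ref{fig:max_construct}: a path with pendant length-$2$ paths and two ``double-tooth'' branchings) together with the explicit counts of matchings of the new edges that do or do not cover the old root ($10144$ and $19888$) and the old and new root simultaneously ($6880$ and $13456$), from which the transfer matrix and its eigenvalues $13384\pm 8\sqrt{2793745}$ follow. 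Two further remarks. First, your plan tracks the full reduced vector $(a_0,a_1,b_0,b_1)$; the paper works only with the pair (all matchings, matchings not covering the root), which suffices because on good trees \emph{every} matching is legal, so the legality bookkeeping encoded in $b_0,b_1$ is unnecessary --- you should either invoke that fact or justify carrying the extra coordinates. Second, to conclude $M_n=\Omega(\alpha^n)$ for \emph{all} $n$ rather than for $n$ in a single residue class modulo $22$, one must vary the base tree $T_0$ over all congruence classes; the paper notes this explicitly and your write-up should too.
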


\begin{proof}
We provide an explicit construction. To this end, let us introduce some notation. For a rooted tree $T$ with root $r$, let $z(T)$ be the total number of matchings of $T$ and $z_0(T)$ the number of matchings  where the root is not covered (i.e., the root is not an end of one of the matching edges).

Now let us construct a sequence of rooted trees $T_k$ with a repeating pattern of length $22$. The starting point can be any good rooted tree $T_0$; then all further trees in the sequence are good trees as well.

\begin{figure}[!h]
\begin{center}
\begin{tikzpicture}[scale=0.7]
\node[fill=black,circle,inner sep=1.5pt]  at (0,0) {};
\node[fill=black,circle,inner sep=1.5pt]  at (1,0) {};
\node[fill=black,circle,inner sep=1.5pt]  at (2,0) {};
\node[fill=black,circle,inner sep=1.5pt]  at (3,0) {};
\node[fill=black,circle,inner sep=1.5pt]  at (4,0) {};
\node[fill=black,circle,inner sep=1.5pt]  at (5,0) {};
\node[fill=black,circle,inner sep=1.5pt]  at (6,0) {};
\node[fill=black,circle,inner sep=1.5pt]  at (7,0) {};
\node[fill=black,circle,inner sep=1.5pt]  at (8,0) {};
\node[fill=black,circle,inner sep=1.5pt]  at (9,0) {};
\node[fill=black,circle,inner sep=1.5pt]  at (1,1) {};
\node[fill=black,circle,inner sep=1.5pt]  at (1,2) {};
\node[fill=black,circle,inner sep=1.5pt]  at (2,-1) {};
\node[fill=black,circle,inner sep=1.5pt]  at (2,-2) {};
\node[fill=black,circle,inner sep=1.5pt]  at (4,1) {};
\node[fill=black,circle,inner sep=1.5pt]  at (4,2) {};
\node[fill=black,circle,inner sep=1.5pt]  at (3,3) {};
\node[fill=black,circle,inner sep=1.5pt]  at (5,3) {};
\node[fill=black,circle,inner sep=1.5pt]  at (3,4) {};
\node[fill=black,circle,inner sep=1.5pt]  at (5,4) {};
\node[fill=black,circle,inner sep=1.5pt]  at (5,-1) {};
\node[fill=black,circle,inner sep=1.5pt]  at (5,-2) {};
\node[fill=black,circle,inner sep=1.5pt]  at (4,-3) {};
\node[fill=black,circle,inner sep=1.5pt]  at (6,-3) {};
\node[fill=black,circle,inner sep=1.5pt]  at (4,-4) {};
\node[fill=black,circle,inner sep=1.5pt]  at (6,-4) {};
\node[fill=black,circle,inner sep=1.5pt]  at (7,1) {};
\node[fill=black,circle,inner sep=1.5pt]  at (7,2) {};
\node[fill=black,circle,inner sep=1.5pt]  at (8,-1) {};
\node[fill=black,circle,inner sep=1.5pt]  at (8,-2) {};

\draw (0,0)--(9,0);
\draw (1,0)--(1,2);
\draw (2,0)--(2,-2);
\draw (4,0)--(4,2);
\draw (3,4)--(3,3)--(4,2)--(5,3)--(5,4);
\draw (5,0)--(5,-2);
\draw (4,-4)--(4,-3)--(5,-2)--(6,-3)--(6,-4);
\draw (7,0)--(7,2);
\draw (8,0)--(8,-2);

\node at (-1,0) {$\cdots$};
\node at (9,-0.5) {$r_k$};
\node at (3,-0.5) {$r_{k+1}$};
\node at (10.5,0) {$T_k$};
\node at (3,-4) {$T_{k+1}$};

\draw [dashed] (8.5,0)--(12.5,3)--(12.5,-3)--(8.5,0);
\draw [dashed] (2.5,-4.5)--(2.5,4.5)--(13,4.5)--(13,-4.5)--(2.5,-4.5);

\end{tikzpicture}
\end{center}
\caption{Construction of a sequence of trees with many legal matchings.}\label{fig:max_construct}
\end{figure}
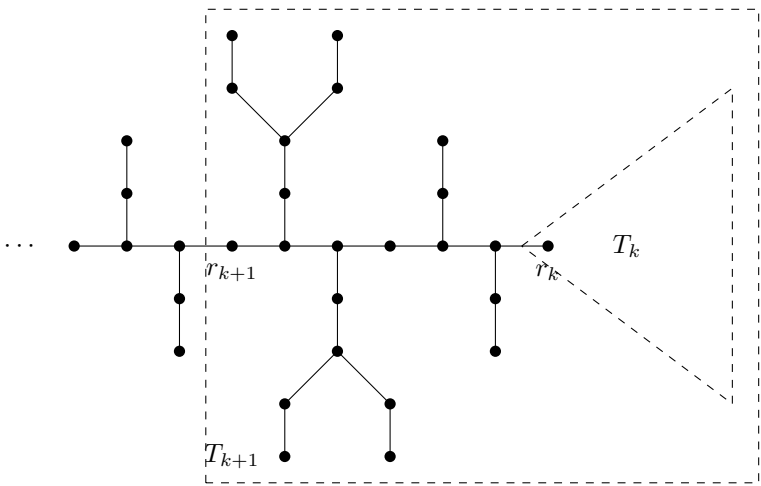

The tree $T_{k+1}$ is constructed by attaching a piece of $22$ nodes (and $22$ edges) to the root of $T_k$ and assigning a new root $r_{k+1}$ as indicated in Figure~\ref{fig:max_construct}. Now we can determine a recursion for $z(T_{k+1})$ and $z_0(T_{k+1})$ in terms of $z(T_k)$ and $z_0(T_k)$. There are $10144$ matchings of the $22$ new edges that cover $r_k$, and $19888$ matchings that do not cover it. The former can be extended with any matching of $T_k$ that does not cover $r_k$, the latter with an arbitrary matching of $T_k$. Hence we have
$$z(T_{k+1}) = 19888 z(T_k)  + 10144 z_0(T_k).$$
Likewise, considering only matchings that leave $r_{k+1}$ uncovered, we obtain
$$z_0(T_{k+1}) = 13456 z(T_k)  + 6880 z_0(T_k).$$
Hence we have
$$\begin{bmatrix} z(T_k) \\ z_0(T_k) \end{bmatrix} = \begin{bmatrix} 19888 & 10144 \\ 13456 & 6880 \end{bmatrix}^k \begin{bmatrix} z(T_0) \\ z_0(T_0) \end{bmatrix}.$$
Since the eigenvalues of the $2 \times 2$-matrix are $13384 \pm 8\sqrt{2793745}$, it follows that $z(T_k) = \Theta\big((13384 + 8\sqrt{2793745})^k\big)$. Note that $T_k$ has $n = 22k + O(1)$ nodes, and that we can cover all possible congruence classes modulo $22$ by choosing $T_0$ accordingly. Hence we have $M_n = \Omega(\alpha^n)$ with $\alpha = (13384+8\sqrt{2793745})^{1/22} \approx 1.5895$.
\end{proof}





\newpage

\end{document}